\newcommand{\cmark}{\ding{51}}%
\newcommand{\xmark}{\ding{55}}%
\begin{document}
\title{Riemannian Adaptive Regularized Newton Methods with Hölder Continuous Hessians}

\author[1]{\fnm{Chenyu} \sur{Zhang}}\email{zcysxy@mit.edu}

\author*[2]{\fnm{Rujun} \sur{Jiang}}\email{rjjiang@fudan.edu.cn}

\affil[1]{\orgdiv{Institute for Data, Systems, and Society \& Laboratory for Information and Decision Systems}, \orgname{Massachusetts Institute of Technology}, \orgaddress{\state{Massachusetts}, \country{USA}}}
\affil[2]{\orgdiv{School of Data Science}, \orgname{Fudan University}, \orgaddress{\state{Shanghai}, \country{China}}}

\abstract{
	This paper presents strong worst-case iteration and operation complexity guarantees for Riemannian adaptive regularized Newton methods, a unified framework encompassing both Riemannian adaptive regularization (RAR) methods and Riemannian trust region (RTR) methods. We comprehensively characterize the sources of approximation in second-order manifold optimization methods: the objective function's smoothness, retraction's smoothness, and subproblem solver's inexactness. Specifically, for a function with a $\mu$-\holder continuous Hessian, when equipped with a retraction featuring a $\nu$-\holder continuous differential and a $\theta$-inexact subproblem solver, both RTR and RAR with $\ar{\alpha}$ regularization (where $\alpha=\min\{\mu,\nu,\theta\}$) locate an $(\epsilon,\epsilon^{\alpha /(1+\alpha)})$-approximate second-order stationary point within at most $O(\epsilon^{-(2+\alpha)/(1+\alpha)})$ iterations and at most $\widetilde{O}(\epsilon^{- (4+3\alpha) /(2(1+\alpha))})$ Hessian-vector products with high probability. These complexity results are novel and sharp, and reduce to an iteration complexity of $O(\epsilon^{-3 /2})$ and an operation complexity of $\widetilde{O}(\epsilon^{-7 /4})$ when $\alpha=1$.
}

\keywords{Riemannian optimization, adaptive regularization, trust region, \holder continuity, worst-case complexity}

\maketitle

\section{Introduction} 
We consider an unconstrained nonconvex manifold optimization problem:
$$
	\min_{x\in\M} f(x),
$$
where $\M$ is a complete Riemannian manifold, and $f$ is bounded below, and exhibits $C^{2,\mu}$ smoothness, meaning it is twice continuously differentiable and possesses a $\mu$-order \holder continuous Hessian.
Considering the problem's nonconvexity, we propose employing Riemannian adaptive regularized Newton (RARN) methods
that incorporate Riemannian trust region (RTR) \citep{absilTrustRegionMethodsRiemannian2007,zhang2023RiemannianTrust} and adaptive regularization (RAR) methods  \citep{agarwal2021Adaptiveregularization,qi2011Numericaloptimization}.
Within each iteration of RARN, we solve a \textit{regularized} model problem \textit{inexactly} on a tangent space:
$$
	\eta_k \approx \argmin_{\eta\in\tm{x}} \bar{m}_{x_k}(\eta)\coloneqq m_{x_k}(\eta) + \phi(\eta;\sigma_k),
$$
where $m_{x_k}$ is the Newton model function, i.e., the truncated second-order Taylor expansion of the objective function at $x_k\in\M$, and $\phi$ is the regularization, with $\sigma_k$ serving as an adaptively changing regularization parameter.
Subsequently, if the solution yields a significant decrease in the objective function, we pull it back onto the manifold using a \textit{retraction} as the next iteration point:
$$
	x_{k+1} = R_{x_k}(\eta_k).
$$
In an Euclidean space, the canonical retraction is the identity function, simplifying the iteration to $x_{k+1} = x_k + \eta_k$.
To align with the smoothness characteristics of the objective function, we employ $\theta$-inexact subproblem solvers and $C^{1,\nu}$ retractions.
Detailed definitions and discussions of these elements are deferred to \cref{sec:pre,sec:rarn}.

Motivated by recent advances in worst-case iteration complexity guarantees for Newton-type methods \citep{nesterov2006cubic,cartis2011Adaptivecubic,royer2020NewtonCGAlgorithm,curtis2021TrustRegionNewtonCG}, this paper investigates the iteration complexity of RTR and RAR under the aforementioned relaxations.
Iteration complexity refers to a \textit{non-asymptotic} bound on the number of outer iterations an algorithm needs to attain a solution within a given tolerance.
Concurrently, operation complexity, which bounds the number of \textit{unit operations} (either a gradient
evaluation or a Hessian-vector multiplication) required to find an approximated solution, has emerged as another focal non-asymptotic convergence measure due to its direct correspondence with computational cost \citep{curtis2021TrustRegionNewtonCG,agarwal2021Adaptiveregularization,carmon2020FirstOrderMethods}.
We also provide strong operation complexity guarantees, measured in terms of the number of Hessian-vector products, for both RTR and RAR.

\subsection{Main Results}

Our main results and contributions unfold across several dimensions.

\parag{A unified algorithm and iteration complexity analysis framework}
We begin by formulating a unified Riemannian adaptive regularized Newton (RARN) method framework, encompassing both Riemannian trust region (RTR) and adaptive regularization (RAR) methods.
Within this framework, we develop a unified iteration complexity analysis approach for RARN.
This versatile module allows us to derive iteration complexity guarantees for various regularization methods effortlessly.

\parag{A comprehensive approximation characterization and relaxation}
We identify the three sources of approximation within second-order manifold optimization methods:
\begin{enumerate*}[label=\upshape\arabic*\upshape)]
	\item The quadratic model function approximates the actual objective function.
	\item The retraction approximates the smooth diffeomorphism between the tangent space and the manifold;
	\item The inexact model problem solution approximates the exact solution.
\end{enumerate*}
These approximation levels can be characterized by the objective function's smoothness, retraction's smoothness, and subproblem solver's exactness.
Our investigation particularly explores objective functions with a $\mu$-\holder continuous Hessian, retractions with a $\nu$-\holder continuous differential, and $\theta$-inexact subproblem solvers.
Significantly, our analysis unveils an equitable contribution from these three sources of approximation to the iteration complexity: the iteration complexity order can only be improved by enhancing the worst-performing source.

\parag{A general-order adaptive regularization method}
Our work extends the realm of Riemannian adaptive regularization methods by introducing a novel adaptive $\ar{\omega}$ regularization method ($\omega\in(0,1]$), broadening the horizons beyond cubics. We pave the way for adaptivity without the constraint of $\mu \ge \omega$, a restriction posed by prior work on general-order adaptive regularization \citep{birgin2017Worstcaseevaluation,cartis2020SharpWorstCasea,cartis2011OptimalNewtontype},
in both Euclidean and Riemannian settings.
	Nevertheless, our findings reveal that optimal iteration complexity is achieved if and only if $\omega = \alpha \coloneqq \min \{ \mu,\nu,\theta \}$.

	\parag{A sharp worst-case iteration and operation complexity}
	Finally, we present our complexity guarantees.
	When $\omega = \alpha$, both RTR and RAR locate an $(\epsilon,\epsilon^{\alpha /(1+\alpha)})$-approximate second-order stationary point {(see \cref{def:opt})} within at most
$$
	O\left(\epsilon^{-\tfrac{2+\alpha}{1+\alpha}}\right)
$$
iterations.
This represents the first second-order iteration complexity result of Newton-type methods for objective functions with a \holder continuous Hessian, even within the framework of Euclidean spaces.
This complexity matches the optimal bound of $O(\epsilon^{-(2+\mu) /(1+\mu)})$ for this class of problems \cite{cartis2011OptimalNewtontype}, as well as the optimal bound of $O(\epsilon^{-3 /2})$ for $C^{2,1}$ problems \cite{carmon2020Lowerbounds} when $\mu = \nu = \theta = 1$.
Moreover, when $\alpha = 1$, this result establishes the first optimal iteration complexity result of $O(\epsilon^{-3 /2})$ for Riemannian trust region methods targeting both first-order and second-order stationarity.
Augmented by a meticulous analysis of the subproblem solvers, we also provide a Hessian-vector product operation complexity of $\widetilde{O}(\epsilon^{- (4+3\alpha) /(2(1+\alpha))})$, which reduces to $\widetilde{O}(\epsilon^{-7 /4})$ when $\alpha=1$.
\cref{tb:comp} compares our results with other related work and highlights the gaps our study addresses.

\ifSubfilesClassLoaded{\bibliography{ic}}{}
\subsection{Related Work}

\begin{table}[ht]
	\caption{~Comparison of worst-case iteration complexity results for adaptive regularized Newton methods, aiming to achieve $\epsilon$-approximate first-order stationarity or $(\epsilon,\epsilon^{\mu / (1+\mu)})$-approximate second-order stationarity.
		Asymptotic notation has been omitted from the complexity results.
		We remark that \cite{grapiglia2017RegularizedNewton} achieves the complexity (up to logarithmic factors) without requiring knowledge of $\mu$.
	}
	\label{tb:comp}
	\renewcommand{\arraystretch}{1.2}
	\centering
	{\begin{tabular}{cccccc}
			\toprule
			Work                                                                                       & Space      & Newton Extension           & Stationarity & Smoothness  & Iteration Complexity           \\
			\midrule
			\citet{grapiglia2017RegularizedNewton}                                                     & Euclidean  & line search                & first order  & $C^{2,\mu}$ & $\epsilon^{-(2+\mu)/(1+\mu)}$  \\
			\citet{royer2020NewtonCGAlgorithm}                                                         & Euclidean  & line search                & second order & $C^{2,1}$   & $\epsilon^{-3/2}$              \\
			\citet{curtis2021TrustRegionNewtonCG}                                                      & Euclidean  & trust region               & second order & $C^{2,1}$   & $\epsilon^{-3/2}$              \\
			\citet{nesterov2006cubic} and \citet{cartis2011Adaptivecubic}                              & Euclidean  & cubic regularization       & second order & $C^{2,1}$   & $\epsilon^{-3/2}$              \\
			\citet{boumal2019global}                                                                   & Riemannian & trust region               & second order & $C^{2,1}$   & $\epsilon^{-5/2}$              \\
			\rowcolor{cyan!30} Our work (\cref{cor:tr})                                                & Riemannian & trust region               & second order & $C^{2,1}$   & $\epsilon^{-3/2}$              \\
			\makecell{\citet{zhang2018CubicRegularized} and \citet{agarwal2021Adaptiveregularization}} & Riemannian & cubic regularization       & second order & $C^{2,1}$   & $\epsilon^{-3/2}$              \\
			\rowcolor{green!30} Our work (\cref{cor:ar})                                               & Riemannian & $2\!+\!\mu$ regularization & second order & $C^{2,\mu}$ & $\epsilon^{-(2+\mu) /(1+\mu)}$ \\
			\rowcolor{green!30} Our work (\cref{cor:tr})                                               & Riemannian & trust region               & second order & $C^{2,\mu}$ & $\epsilon^{-(2+\mu) /(1+\mu)}$
			\\\bottomrule
		\end{tabular}}
\end{table}

Absil et al.~\citet{absilTrustRegionMethodsRiemannian2007} extended trust region methods to Riemannian manifolds with asymptotic convergence results for $C^{2,1}$ objective functions.
Qi~\citet{qi2011Numericaloptimization} extended adaptive regularization with cubics to Riemannian manifolds with asymptotic local convergence results. This paper focuses on non-asymptotic convergence results of RTR and RAR.
We compare the related worst-case iteration complexity findings of adaptive regularized Newton methods in \cref{tb:comp}.
For simplicity, \cref{tb:comp} solely lists the smoothness requirement on the objective function. Note that our approach also introduces a more lenient smoothness requirement on the retraction ($R\in C^{1,\nu}$), distinguishing it from prevailing research on Riemannian methods where retractions are typically $C^{2}$ \cite{boumal2019global,zhang2018CubicRegularized,agarwal2021Adaptiveregularization}.

\cref{tb:comp} does not aim to encompass the entire research on the iteration complexity of Newton-type methods.
Given our specific focus on establishing complexity guarantees under relaxed smoothness requirements within the Riemannian setting, we only list the foundational works that initially established these complexities.
For more results along this line, please refer to \cite{carmon2018accelerated,carmon2020FirstOrderMethods,yao2023InexactNewtonCGa,xu2020Newtontypemethods}.
In addition to the research summarized in \cref{tb:comp}, it is noteworthy to mention related studies with distinct emphases.
Cartis et al.~\citet{cartis2020SharpWorstCasea} also considered a $\mu$-\holder continuous Hessian for a conceptual regularization algorithm, presenting a complexity of $O(\epsilon_{H}^{-(2+\mu) /\mu})$ for the attainment of an $\epsilon_{H}$-approximate second-order stationary point, aligning with our complexity (see \cref{cor:ar}).
Dedicated to functions with a $\mu$-\holder continuous Hessian, \cite{cartis2011OptimalNewtontype} proved that the complexity of finding an $\epsilon$-approximate first-order stationary point is lower bounded by $O(\epsilon^{- (2+\mu) /(1+\mu)})$, a bound which our methods successfully attain.

Beyond iteration complexity, recent research has introduced second-order methods capable of achieving a remarkable \textit{operation complexity} of $\widetilde O({\epsilon^{-7/4}})$ with high probability, for finding an $(\epsilon,\epsilon^{1/2})$-approximate stationary point.
Building upon adaptive regularization with cubics, Agarwal et al.~\cite{agarwal2017finding} derived an algorithm that attains this operation complexity bound, employing a subproblem solver with $\widetilde O(\epsilon^{-1/4})$ operation complexity. Subsequently, various methods have emerged that replicate the same complexity results \cite{royer2018Complexityanalysis,royer2020NewtonCGAlgorithm,jiang2022cubic}.
Very recently, Curtis et al.~\cite{curtis2021TrustRegionNewtonCG} demonstrated that such complexity can be achieved through a variant of trust region Newton methods that solve the trust region subproblem inexactly, utilizing the truncated conjugate gradient method \cite{steihaugConjugateGradientMethod1983,toint1981efficientsparsity}.
Carmon et al.~\cite{carmon2018accelerated} and Li and Lin \cite{li2022restarted} proposed variants of accelerated gradient methods that also converge to an $(\epsilon,\epsilon^{1/2})$-approximate stationary point with an {operation complexity} of $\widetilde O({\epsilon^{-7/4}})$.

\ifSubfilesClassLoaded{\bibliography{ic}}{}

\subsection*{Notation}
For scalars $x$ and $y$, we write $[x]_+ \coloneqq \max \{ 0,x \}$, $x\vee y \coloneqq \max \{ x,y \}$, and $x\w y \coloneqq \min \{ x,y \}$.
We write $x\gtrsim y$ if there exists a positive constant $C$ such that $Cx \ge y$, and write $x \lesssim y$ if $y\gtrsim x$.
For sets $\mathcal{A}$ and $\mathcal{B}$, we denote their cardinality by $|\mathcal{A}|$ and $|\mathcal{B}|$, and their disjoint union by $\mathcal{A}\sqcup \mathcal{B}$.
We sometimes write $f_k$, $g_k$, and $H_k$ as shorthand for the function value, gradient, and Hessian, respectively, of $f$ at $x_k$.
We also denote $f_0$ as the objective function evaluated at the initial point and $f_{\min}$ as the objective function's lower bound on the manifold.
Throughout our analysis, for any subscript, $C_{*}$ represents absolute positive constants independent of $\epsilon_{g}$ or $\epsilon_{H}$.
In algorithm descriptions, $(\mathrm{TC.X}\tcand\mathrm{Y})$ indicates the requirement for simultaneously satisfying both termination criteria $(\mathrm{TC.X})$ and $(\mathrm{TC.Y})$, while $(\mathrm{TC.X}\tcor\mathrm{Y})$ implies that fulfilling either termination criterion $(\mathrm{TC.X})$ or $(\mathrm{TC.Y})$ is sufficient.
We use \case{x} to tag different cases in the analysis for clarity.

\ifSubfilesClassLoaded{\bibliography{ic}}{}
\section{Preliminaries} \label{sec:pre}

We consider an unconstrained nonconvex optimization problem on a Riemannian manifold $\mathcal{M}$,
which is a smooth manifold equipped with an inner product $\left<\cdot,\cdot\right>_{x}$ on each tangent space $\tm{x}$, $x\in\M$, varying smoothly over $\M$. The Riemannian inner product further defines a norm $\|\cdot\|_{x}$ on $\tm{x}$ and a distance $\dist(\cdot ,\cdot )$ on $\M$.
Given our focus on the Riemannian metric, we will omit its subscript $x$ throughout the remainder of this paper.
The Riemannian inner product also helps define the gradient and Hessian of a function $f$ from $\M$ to $\R$:
$$
	\left<\g f(x),\xi \right> = \d f(x)[\xi],\quad
	\hess f(x)[\xi] = \nabla_{\xi}\g f(x), \quad \forall \xi\in\tm{x},
$$
where $\d f(x)$ is the differential of $f$ at $x$, and $\nabla$ is the Riemannian (Levi-Civita) connection on $\M$.

Throughout this paper, we consistently denote a parametrized smooth curve from $[0,1]$ to $\M$ as $\gamma$.
A vector field $X$ is said to be parallel along $\gamma$ if $\nabla_{\gamma'(t)}X = 0$ for all $t\in[0,1]$.
For any $\xi\in\tm{\gamma(0)}$, there exists a unique parallel vector field $X_{\xi}$ along $\gamma$ such that $X_{\xi}(0) = \xi$, defining a parallel transport operator $P_{\gamma}^{0\to t}: \xi \mapsto X_{\xi}(t)$.
Parallel transport is an isometry that preserves the inner product, bridging different tangent spaces.
Our focus is specifically on complete Riemannian manifolds, where geodesics---smooth curves whose tangent vector is parallel along itself---are the shortest curves connecting any two points and can be extended to the entire real axis.
Consequently, the exponential map $\exp_{x}:\xi\mapsto \gamma(1)$, where $\gamma$ is the unique geodesic with $\gamma(0)=x$, $\gamma'(0)=\xi$, and $\dist(\gamma(0),\gamma(1))=\|\xi\|$, is defined on the entire tangent space. When the parallel transport is along a geodesic from $x$ to $y$, we denote $P_{xy}\coloneqq P_{\gamma}^{0\to1}$. Detailed preliminaries for Riemannian optimization can be found in monographs such as \citet{absilOptimizationAlgorithmsMatrix2008} and \citet{boumal2023introductionoptimization}.

We now define the approximate second-order stationarity, which serves as the objective of our methods.
\begin{definition}[Approximate second-order stationary point] \label{def:opt}
	We say $x\in\M$ is an $(\epsilon_g,\epsilon_H)$-approximate second-order stationary point of $f$ if
	$$
		\|\g f(x)\| \le \epsilon_g,\quad
		\lambda_{\min}(\hess f(x)) \ge - \epsilon_H,
	$$
	where $\lambda_{\min}$ returns the smallest eigenvalue of a linear operator on $\tm{x}$.
\end{definition}

Our exploration in this work centers on objective functions with a \holder continuous Hessian.

\begin{definition}[\holder continuity of objective's Hessian] 
	We say $f$'s Hessian is \holder continuous with order $\mu \in (0,1]$, if there exist a constant $C_{H}\ge 0$ such that for any $x,y\in\M$, it holds that
	$$
		\|P_{yx} \hess f(y) P_{xy}  - \hess f(x) \|_{\mathrm{op}} \le C_{H}\dist(x,y)^{\mu},
	$$
	where $\|\cdot \|_{\mathrm{op}}$ is the operator norm of the linear operators on $\tm{x}$.
	We denote $f\in C^{2,\mu}$ if $f\in C^{2}$ and $\hess f$ is $\mu$-order \holder continuous.
\end{definition}

To pull a point on the tangent space back onto the manifold, we use retractions.
Specifically, a map $R: \tm{}\to\M$ is called a retraction if its restriction to any $x\in\M$, denoted as $R_{x}$, satisfies conditions $R_{x}(0) = x$ and $\d R_{x}(0) = \operatorname{id}(\tm{x})$.
The exponential map is a special retraction, and retractions can be viewed as a first-order approximation of the exponential map.
This approximation bias becomes more significant for nonsmooth retractions.
Therefore, prior work often necessitates retractions to be $C^{2}$ or even \textit{second-order} retractions \citep{absilTrustRegionMethodsRiemannian2007,agarwal2021Adaptiveregularization}, whose \textit{acceleration} at the origin is zero: $\d^2 R_{x}(0_{x}) = 0$ (see \citet{absilOptimizationAlgorithmsMatrix2008} or \citet{boumal2023introductionoptimization}).
In contrast, this paper only mandates that retractions' differential be \holder continuous at the origin.
Examples of such retractions can be found in \citet[Remark 1]{zhang2023RiemannianTrust}.

\begin{definition}[\holder continuity of retraction's differential] 
	We say $R_{x}$'s differential is \holder continuous at origin with order $\nu \in (0,1]$, if there exist a constant $C_{R}\ge 0$, such that for any $x\in\M$ and $\xi\in\tm{x}$, it holds that
	$$
		\|P_{yx}\d R_{x}(\xi)P_{xy} - \d R_{x}(0_{x})\|_{\operatorname{op}} \le C_{R}\|\xi\|^{\nu},
	$$
	where $x = R_{x}(0_{x})$ and $y \coloneqq R_{x}(\xi)$.
	We denote $R\in C^{1,\nu}$ if $R\in C^{1}$ and $\d R$ is $\nu$-order \holder continuous.
\end{definition}

The following proposition delineates the accuracy of retractions concerning their approximation to the exponential map.

\begin{proposition}[Retraction properties]\label{lem:ma-diff}
	Suppose $R_{x}$ has a \holder continuous differential with order $\nu\in(0,1]$ and constant $C_{R}$.
	For any $x\in\M$ and $\eta\in\tm{x}$, it holds that
	$$
		\dist (R_x(\eta), \exp_x(\eta)) \le C_{R} \|\eta\|^{1 + \nu},
	$$
	and
	$$
		\|\eta - \exp_{x}^{-1}(R_{x}(\eta))\|
		\le C_{R} \|\eta\|^{1 + \nu}.
	$$
	Moreover, if the operator norm of $\hess f$ is upper bounded by $\beta_{H}$, then the discrepancy between their composition with the objective function is bounded by
	$$
		|f(R_{x}(\eta)) - f(\exp_{x}(\eta))| \le  \left( (1 + C_{R}\|\eta\|^{\nu})\cdot \beta_{H}\|\eta\| + \|\g f(x)\|\right)\cdot C_{R}\|\eta\|^{1+\nu}.
	$$
\end{proposition}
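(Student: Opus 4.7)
The plan is to establish the three bounds by analyzing the retraction curve $c(t)\coloneqq R_{x}(t\eta)$ for $t\in[0,1]$ together with its tangent-space lift $u(t)\coloneqq \exp_{x}^{-1}(c(t))\in\tm{x}$, which is well-defined for $\|\eta\|$ below the injectivity radius at $x$ (the complementary regime is absorbed by adjusting $C_{R}$). The Hölder hypothesis on $\d R_{x}$ supplies the essential pointwise control on $c'(t)$ near $t=0$, and the exponential map translates that control into the required metric and functional bounds on $\M$.

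First I would establish the second bound. Differentiating the identity $c(t)=\exp_{x}(u(t))$ gives $\d R_{x}(t\eta)[\eta]=\d\exp_{x}(u(t))[u'(t)]$. Combining the Hölder estimate
$$
\bigl\|P_{c(t),x}\,\d R_{x}(t\eta)[\eta]-\eta\bigr\|\le C_{R}\,t^{\nu}\|\eta\|^{1+\nu}
$$
with the fact that $\d\exp_{x}(u(t))$ agrees with parallel transport up to an $O(\|u(t)\|^{2})$ operator-norm error (a standard consequence of the Gauss lemma), one obtains $\|u'(t)-\eta\|\le C_{R}\,t^{\nu}\|\eta\|^{1+\nu}$ after absorbing the lower-order correction into $C_{R}$. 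Integrating in $t$ with $u(0)=0$ yields $\|u(1)-\eta\|\le C_{R}\|\eta\|^{1+\nu}/(1+\nu)\le C_{R}\|\eta\|^{1+\nu}$, which is the second bound. The first bound then follows by comparing $\dist(\exp_{x}(u(1)),\exp_{x}(\eta))$ with $\|u(1)-\eta\|$ via the near-isometry of $\exp_{x}$ on the relevant neighborhood.

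For the third bound, let $\sigma:[0,1]\to\M$ be a minimizing geodesic from $\exp_{x}(\eta)$ to $R_{x}(\eta)$, so that $\|\sigma'(s)\|\equiv\dist(R_{x}(\eta),\exp_{x}(\eta))\le C_{R}\|\eta\|^{1+\nu}$ by the first bound. The fundamental theorem of calculus along $\sigma$ gives
$$
\bigl|f(R_{x}(\eta))-f(\exp_{x}(\eta))\bigr|\le \max_{s\in[0,1]}\bigl\|\g f(\sigma(s))\bigr\|\cdot\dist(R_{x}(\eta),\exp_{x}(\eta)).
$$
Using $\|\hess f\|_{\mathrm{op}}\le\beta_{H}$ together with parallel transport of gradients along geodesics yields $\|\g f(\sigma(s))\|\le\|\g f(x)\|+\beta_{H}\dist(x,\sigma(s))$, and the triangle inequality along $x\to\exp_{x}(\eta)\to\sigma(s)$ bounds $\dist(x,\sigma(s))\le \|\eta\|+C_{R}\|\eta\|^{1+\nu}=(1+C_{R}\|\eta\|^{\nu})\|\eta\|$. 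Substituting produces the stated inequality.

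The main obstacle is the first step: carefully tracking the correction between $\d\exp_{x}(u(t))$ and parallel transport, and reconciling the somewhat atypical parallel-transport conventions of the Hölder-continuity definition with the lifted ODE for $u(t)$. Once these bookkeeping issues are resolved, the remaining arguments are routine applications of the triangle inequality and the fundamental theorem of calculus.
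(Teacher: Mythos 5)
Your treatment of the third bound matches the paper's: both run the fundamental theorem of calculus along the geodesic $\sigma$ from $\exp_x(\eta)$ to $R_x(\eta)$, bound the integrand via $\|\grad f(\sigma(s))\|\le\|\grad f(x)\|+\beta_H\dist(x,\sigma(s))$ (Lipschitz gradient from the Hessian bound, realized through parallel transport), and control $\dist(x,\sigma(s))$ by the triangle inequality through $\exp_x(\eta)$. The paper uses the exact mean-value form (a single $\tau\in[0,1]$) where you take a supremum over $s$, but the resulting estimate is identical. One small caveat: you bound $\dist(x,\sigma(s))\le\|\eta\|+\dist(R_x(\eta),\exp_x(\eta))$ at the endpoint; for intermediate $s$ you should go through $\exp_x(\eta)$ and use $\dist(\exp_x(\eta),\sigma(s))\le\dist(\exp_x(\eta),R_x(\eta))$ since $\sigma$ is a minimizing geodesic between those two points — the paper does this explicitly and you implicitly do too, so this is fine.

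For the first two bounds, the comparison is not really between two proofs: the paper simply cites them from Zhang and Jiang (Proposition~3 and Corollary~5.1 of that reference), whereas you sketch a direct derivation via the lifted curve $u(t)=\exp_x^{-1}(R_x(t\eta))$. Your ODE strategy is natural, but as written it has a real gap. The step ``$\d\exp_x(u(t))$ agrees with parallel transport up to an $O(\|u(t)\|^2)$ operator-norm error'' is a Jacobi-field/Rauch-comparison estimate, not merely the Gauss lemma, and the implied constant depends on a sectional-curvature bound along the radial geodesics; the same is true of the ``near-isometry of $\exp_x$'' you invoke to pass from $\|u(1)-\eta\|$ to $\dist(\exp_x(u(1)),\exp_x(\eta))$. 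The paper's standing hypotheses (\cref{asmp}) impose no curvature bound, and the proposition asserts the inequalities with the \emph{same} constant $C_R$ as in the H\"older definition, so ``absorbing the lower-order correction into $C_R$'' would make $C_R$ silently curvature-dependent — a different statement than the one you are trying to prove. You flagged this as the main obstacle, correctly, but the sketch does not resolve it; a complete direct proof would either need a curvature assumption or a more careful argument that avoids the Jacobi-field comparison (which is presumably what the cited reference does). As it stands the safest route is to follow the paper and cite those two inequalities.
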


The third inequality presented in \cref{lem:ma-diff} introduces a dependence on $\g f$, a distinctive characteristic of the Riemannian setting involving non-second-order retractions.
As we progress through subsequent sections, it will become evident that this reliance on the gradient holds significant implications for the dynamics of the regularization parameter. Its careful analysis is essential for deriving the bounds of the regularization parameter.

We conclude this section with a blanket assumption upheld throughout the paper, serving as the foundation for all subsequent results.

\begin{assumption} \label{asmp}
	The objective function $f$ is bounded below on the complete Riemannian manifold $\M$.
	The approximation tolerance in \cref{def:opt} satisfies $\epsilon_{g}\vee \epsilon_{H}\le 1$.
	The objective function possesses a \holder continuous Hessian with order $\mu\in (0,1]$ and constant $C_{H}$.
	The retraction has a \holder continuous differential with order $\nu\in(0,1]$ and constant $C_{R}$.
	Finally, the Hessian has a uniformly bounded operator norm on the level set $\mathcal{M}_{f_0}\coloneqq \{ x\in\M: f(x)\le f_0 \}$, i.e., there exists $\beta_{H}>0$ such that $\|\hess f\|_{\mathrm{op}}\le \beta_{H}$ on $\mathcal{M}_{f_0}$.
\end{assumption}

All conditions in \cref{asmp} can be relaxed to apply exclusively along the trajectory of the algorithm.
Moreover, we impose separate conditions on the objective function and the retraction we use, which is more pragmatic and versatile compared to the assumption on their composition $f\circ R$ found in existing literature \citep{boumal2019global,agarwal2021Adaptiveregularization}.

\ifSubfilesClassLoaded{\bibliography{ic}}{}
\section{Riemannian Adaptive Regularized Newton Methods} \label{sec:rarn}
\ifSubfilesClassLoaded{\maketitle}{}

Line search, trust region, and higher-order regularization methods are popular extensions of Newton methods for unconstrained nonconvex optimization.
These diverse methods and their variants can be conceptualized as applying various forms of regularization to the Newton model function, i.e., the truncated second-order Taylor expansion of the objective function:
\begin{equation}\label{eq:model}
	m_x(\eta) \coloneqq f(x) + \left< \eta, \grad f(x) \right> + \frac{1}{2}\left< \eta,\hess f(x)[\eta ] \right>.
\end{equation}
Here, we present a unified Riemannian adaptive regularized Newton (RARN) method that considers the following regularized model problem on a tangent space:
\begin{equation}\label{eq:model-arn}
	\min_{\eta\in\tm{x}}\bar{m}_{x}(\eta;\sigma)\coloneqq m_{x}(\eta) + \phi(\eta;\sigma),
\end{equation}
where $\phi$ is the regularization and $\sigma$ is the adaptive regularization parameter that are dynamically adjusted throughout the iterations.
	{We impose $\phi(0;\sigma_k) = 0$.}
This scheme encompasses adaptive versions of the previously mentioned methods:
\begin{itemize}
	\item For trust region methods, $\phi(\eta;\Delta) = \delta  _{B(0,\Delta)}(\eta)$, where $\delta$ is the indicator function and $B(0,\Delta)$ is the trust region.
	\item For adaptive $p$-order regularization, $\phi(\eta,\sigma ) = \frac{1}{p}\sigma  \|\eta\|^p$.
\end{itemize}

At each iteration of RARN, we (approximately) solve the regularized model problem \cref{eq:model-arn}, and then decide whether to accept the candidate iteration point or not and update the regularization parameter, according to a comparison between the objective decrease and the (unregularized) model decrease.
More precisely, after the subproblem solver returns $\eta_k$, we compute the relative decrease ratio:
\begin{equation}\label{eq:rho}
	\rho_k = \frac{ f(x_k) -  f(R_{x_k}(\eta_k))}{{m}_{x_k}(0) - {m}_{x_k}(\eta_k)}.
\end{equation}
One can also compute the ratio over the regularized model decrease $\bar{m}_{x_k}(0)-\bar{m}_{x_k}(\eta_k)$, while still maintaining the validity of the complexity guarantees presented in this paper, albeit with minor adjustments in the technical details.
We opt for the unregularized model decrease because it is consistent across all RARN variants.
We provide the RARN framework for achieving an approximate second-order stationary point in \cref{alg:arn}.

\begin{algorithm}[!th]
	\caption{Riemannian Adaptive Regularized Newton Method}
	\label{alg:arn}
	{\Input{Initial point $x_0\in\mathcal{M}$, tolorance $\epsilon_{g}, \epsilon_{H} \in (0,1)$, adaptation parameters, control parameters}
		\Output{Last iterate $x_k$}}
	\For{$k=0,1,\dots$}{
		\uIf (\tcp*[f]{first-order stationarity test}) {$\|\grad f(x_k)\| > \epsilon_{g}$} {
			obtain $\eta_k \approx \arg\min_{\eta\in\tm{x}}\bar{m}_{x_k}(\eta;\sigma_k)$ with termination criterion~(\ref{eq:tc-1}\tcand\hyperref[eq:tc-c]{C})
			\label{line:1}
		} \uElseIf (\tcp*[f]{second-order stationarity test}) {$\hess f(x_k) \not\succeq -\epsilon_{H}I$} {
			obtain $\eta_k \approx \arg\min_{\eta\in\tm{x}}\bar{m}_{x_k}(\eta;\sigma_k)$ with termination criterion~(\ref{eq:tc-2}\tcand\hyperref[eq:tc-c]{C})
			\label{line:2}
		} \Else {
			\Return $x_k$
		}
		compute $\rho_k$ using \eqref{eq:rho}\;
		shrink or expand $\sigma_{k+1}$ according to $\rho_k$\;
		accept $x_{k+1} = R_{x_k}(\eta_k)$ or not according to $\rho_k$\;
	}
\end{algorithm}

Within the uniform algorithm framework, we also introduce termination criteria that offer a greater degree of inexactness and flexibility compared to prior work \citep{cartis2011Adaptivecubic,agarwal2021Adaptiveregularization,curtis2021TrustRegionNewtonCG,xu2020Newtontypemethods}.
First of all, for all iterations, we require the candidate point to outperform the Cauchy point, which is the solution to \cref{eq:model-arn} restricted along the single dimension spanned by $g_k$:
$$
	\eta_{k}^{\mathrm{C}} = \argmin_{\eta\in \spa \{ g_k \}} \bar{m}_{x_k}(\eta).
$$
This leads to the Cauchy termination criterion:
\begin{equation}\label{eq:tc-c}\tag{TC.C}
	\bar{m}_{x_k}(\eta_k) \le \bar{m}_{x_k}(\eta_{k}^{\mathrm{C}}).
\end{equation}
If a candidate point satisfies \cref{eq:tc-c}, by the definition of the regularized model problem \cref{eq:model-arn}, we have
\begin{equation}\label{eq:cauchy}
	m_{x_k}(0) - m_{x_k}(\eta_k)
	= \bar{m}_{x_k}(0) - \bar{m}_{x_k}(\eta_k) + \varphi(\eta_k;\sigma_k)
	\ge \varphi(\eta_k;\sigma_k).
\end{equation}

If the first-order test fails, i.e., $\|g_k\|>\epsilon_{g}$, the subproblem solver needs to minimize the residual $\grad \bar{m}(\eta)$. Thus, we employ the following first-order termination criterion:
\begin{equation}\label{eq:tc-1}\tag{TC.1}
	\|\grad \bar{m}_{x_k}(\eta_k)\| \le \|\eta_k\|^{1+\theta_1},
\end{equation}
where $\theta_1\in(0,1]$ is the parameter governing the degree of inexactness of the subproblem solution.
We remark that due to the non-differentiability of the indicator function, we incorporate the trust region as a problem constraint rather than integrating it into the regularized model function $\bar{m}$. We defer the comprehensive definition of the model problem of RTR to \cref{sec:rtr}.

If the second-order stationarity is targeted and the second-order test fails, i.e., $H_k \not\succeq -\epsilon_{H}I$, the subproblem solver should aim for the following criterion:
\begin{equation}\label{eq:tc-2}\tag{TC.2}
	\hess \bar{m}_{x_k}(\eta_k) \succeq -\|\eta_k\|^{\theta_2}\cdot I,
\end{equation}
where $\theta_2\in(0,1]$ is an inexactness parameter.
For simplicity, we use the same parameter $\theta=\theta_1=\theta_2$ to control the subproblem solver's inexactness in our analysis.
The above termination conditions only serve as basic criteria for our unified framework.
In specific methods, the subproblem solver can incorporate alternative suitable termination conditions, such as truncation conditions for a trust region method.
These conditions also directly establish lower bounds on the step-size $\|\eta_k\|$, subsequently forming lower bounds for the successful objective decrease, as will be demonstrated in the forthcoming sections.

\begin{proposition}[Termination criteria]\label{lem:tc}
	{\upshape 1)} For $\eta_k$ satisfying \cref{eq:tc-1}, if $k$th iteration is successful and $\|\eta_k\|\le 1$, we have 
	\begin{equation*}
		\|g_{k+1}\| \le (2C_{H}(C_{R}+1)+ \beta_H C_{R} + 1)\|\eta_k\|^{1+\mu\w\nu\w\theta} + \|\g \phi(\eta_k;\sigma_k)\|.
	\end{equation*}
	{\upshape 2)} For $\eta_k$ satisfying \cref{eq:tc-2}, we have 
	$$
		-\lambda_{\min}(H_k) \le \|\eta_k\|^{\theta} + \lambda_{\max}(\hess \phi(\eta_k;\sigma_k)).
	$$
\end{proposition}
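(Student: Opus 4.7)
The proof naturally splits: Part 2 is a one-line consequence of \cref{eq:tc-2}, while Part 1 requires a careful gradient-transport argument across the retraction step.

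For Part 2, the key observation is that $m_{x_k}$ is a quadratic function on the Euclidean tangent space $\tm{x_k}$, so $\hess m_{x_k}(\eta) = H_k$ at every $\eta$, and therefore $\hess \bar m_{x_k}(\eta_k) = H_k + \hess \phi(\eta_k;\sigma_k)$. Substituting this into \cref{eq:tc-2} and rearranging gives $H_k \succeq -\|\eta_k\|^{\theta} I - \hess \phi(\eta_k;\sigma_k)$; reading off the smallest eigenvalue of both sides immediately yields the stated bound.

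For Part 1, I would compare $g_{k+1}$ with $g_k + H_k \eta_k$ via parallel transport along the geodesic $\gamma$ from $x_k$ to $x_{k+1}$. Set $\tilde\eta = \exp_{x_k}^{-1}(x_{k+1})$, so that $\gamma'(t) = P_{x_k \gamma(t)} \tilde\eta$ by parallelism of the geodesic velocity. By \cref{lem:ma-diff}, $\|\tilde\eta - \eta_k\| \le C_R\|\eta_k\|^{1+\nu}$ and hence $\|\tilde\eta\|\le (1+C_R)\|\eta_k\|$ under $\|\eta_k\|\le 1$. Applying the fundamental theorem of calculus along $\gamma$ and subtracting $H_k \tilde\eta$ inside the integrand gives
$$P_{x_{k+1} x_k} g_{k+1} - g_k - H_k \tilde\eta = \int_0^1 \bigl(P_{\gamma(t) x_k} \hess f(\gamma(t)) P_{x_k \gamma(t)} - H_k\bigr)[\tilde\eta]\, \d t,$$
whose norm is bounded by $\tfrac{C_H}{1+\mu}\|\tilde\eta\|^{1+\mu}$ via the $\mu$-Hölder continuity of $\hess f$. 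The mismatch $H_k(\tilde\eta-\eta_k)$ is controlled by $\beta_H C_R \|\eta_k\|^{1+\nu}$ via $\|H_k\|_{\mathrm{op}}\le \beta_H$, and the residual $\|g_k+H_k\eta_k+\g\phi(\eta_k;\sigma_k)\|$ is controlled by $\|\eta_k\|^{1+\theta}$ via \cref{eq:tc-1}.

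Combining these three bounds through the triangle inequality, together with the isometry of parallel transport ($\|g_{k+1}\| = \|P_{x_{k+1} x_k} g_{k+1}\|$), bounds $\|g_{k+1}\|$ by $\|\g\phi(\eta_k;\sigma_k)\|$ plus three terms of exponent $1+\mu$, $1+\nu$, and $1+\theta$ respectively. Since $\|\eta_k\|\le 1$, each collapses to $\|\eta_k\|^{1+\mu\w\nu\w\theta}$. The main obstacle is matching the precise constant $2C_H(C_R+1)$ on the Hölder Hessian term: one needs to bound $\|\tilde\eta\|^{1+\mu}$ by a linear-in-$(C_R+1)$ multiple of $\|\eta_k\|^{1+\mu}$, which I would achieve via the subadditivity $(a+b)^{\mu}\le a^{\mu}+b^{\mu}$ for $\mu\in(0,1]$ applied to $\|\tilde\eta\|^{\mu}$, followed by absorbing the higher powers of $C_R$ into the leading factor using $\|\eta_k\|\le 1$.
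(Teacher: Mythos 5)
Your proposal matches the paper's proof essentially step for step. For Part 1, both you and the paper compare $g_{k+1}$ (transported back to $\tm{x_k}$) against $g_k + H_k\tilde\eta$ where $\tilde\eta = \exp_{x_k}^{-1}(x_{k+1})$, bounding the discrepancy by the $\mu$-Hölder Hessian Taylor remainder, then split off the mismatch $H_k(\tilde\eta-\eta_k)$ controlled by \cref{lem:ma-diff}, and finally insert and remove $\g\phi(\eta_k;\sigma_k)$ to use \cref{eq:tc-1}. The only cosmetic divergence is how $\|\tilde\eta\|^{1+\mu}$ is collapsed back to $\|\eta_k\|^{1+\mu}$: the paper applies convexity of $z\mapsto z^{1+\mu}$ to $(\|\eta_k\| + C_R\|\eta_k\|^{1+\nu})^{1+\mu}$, getting $2\|\eta_k\|^{1+\mu} + 2C_R^{1+\mu}\|\eta_k\|^{(1+\mu)(1+\nu)}$, while you propose factoring $\|\tilde\eta\|^{1+\mu}=\|\tilde\eta\|\cdot\|\tilde\eta\|^\mu$ and using subadditivity of $z\mapsto z^\mu$. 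Note that your route actually yields a factor of the form $(1+C_R)(1+C_R^\mu)$, not something linear in $C_R$; but the paper's route produces $2(1+C_R^{1+\mu})$ which is likewise only reconciled with the stated constant $2C_H(C_R+1)$ when $C_R\le 1$, so this is a shared (and harmless) looseness rather than a gap in your argument. For Part 2, your observation that $\hess\bar m_{x_k}(\eta_k)=H_k+\hess\phi(\eta_k;\sigma_k)$ combined with \cref{eq:tc-2} and the Weyl-type eigenvalue inequality is precisely the paper's argument, merely phrased via $\lambda_{\min}$ rather than $\lambda_{\max}(-H_k)$.
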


Readers may notice that the equivalence of $\mu$, $\nu$, and $\theta$ has emerged in the first inequality of \cref{lem:tc}.

While \cref{alg:arn} provides a high-level framework, we will specify the subproblem solver, additional termination criteria, regularization parameter adaptation, and acceptance criteria in dedicated sections for trust region and adaptive regularization methods.

\subsection{Iteration Complexity Analysis Framework}\label{sec:ic-frame}
We now give a unified analysis framework for the iteration complexity of RARN.
A key observation is that, in RARN, the adaptation of the regularization parameter is closely related to the acceptance of a candidate step.
We categorize the $k$th iteration as either ``successful'' or ``unsuccessful'' based on whether $R_{x_k}(\eta_k)$ is accepted as $x_{k+1}$.
Formally, we define the following index sets:
$$
	\begin{gathered}
		\mathcal{K} \coloneqq \{ k\in\mathbb{N}: \text{the algorithm is not terminated within }k\text{th iteration} \},\\
		\mathcal{S} \coloneqq \{ k\in \mathcal{K}: k\text{th iteration is successful} \}, \ \ \text{and}\ \
		\mathcal{U} \coloneqq \{ k\in \mathcal{K}: k\text{th iteration is unsuccessful} \}.
	\end{gathered}
$$
Because $|\mathcal{K}| = |\mathcal{S} \sqcup \mathcal{U}| = |\mathcal{S}| + |\mathcal{U}|$, we can establish a connection between the total number of iterations and the range of values for the regularization parameter. We validate this observation in the following lemma, an adaptation from \citet[Theorem 2.1]{cartis2011Adaptivecubic}.

\begin{lemma}[Decomposition of total number of iterations]\label{lem:K}
	Suppose that \cref{alg:arn} prescribes the following conditions for the regularization parameter: $\sigma_k$ only shrinks for $k\in\mathcal{S}$, expands for any $k\in \mathcal{U}$, and the shrinkage is explicitly lower bounded by $\sigma_{k+1} \ge \max \{ \ubar{\kappa}\sigma_k,\ubar{\sigma} \}$ for some $\ubar{\kappa}<1$, while the expansion is upper bounded by $\sigma_{k+1}\le \bar{\kappa}\sigma_{k}$ for some $\bar{\kappa} >1$.
	Additionally, if
	\begin{enumerate*}[label=\upshape\arabic*\upshape)]
		\item $\mathcal{S}$ is finite and \item a upper bound $\bar{\sigma}<+\infty$ of $\sigma_k$ exists,
	\end{enumerate*}
	then
	$$
		|\mathcal{K}| \le \left( 1 - \frac{\log \ubar{\kappa}}{\log\bar{\kappa}} \right)|\mathcal{S}| + \log_{\bar{\kappa}}\frac{\bar{\sigma}}{\ubar{\sigma}}.
	$$

	Similarly, suppose \cref{alg:arn} specifies that $\sigma_k$ only expands for $k\in\mathcal{S}$, shrinks for any $k\in \mathcal{U}$, and the expansion is explicitly upper bounded by $\sigma_{k+1} \le \min \{ \bar{\kappa}\sigma_k,\bar{\sigma} \}$ for some $\bar{\kappa}>1$, while the shrinkage is lower bounded by $\sigma_{k+1}\ge \ubar{\kappa}\sigma_{k}$ for some $\ubar{\kappa} <1$.
	Additionally, if
	\begin{enumerate*}[label=\upshape\arabic*\upshape)]
		\item $\mathcal{S}$ is finite and \item a lower bound $\ubar{\sigma}>0$ of $\sigma_k$ exists,
	\end{enumerate*}
	then 
	$$
		|\mathcal{K}| \le \left( 1 - \frac{\log \bar{\kappa}}{\log\ubar{\kappa}} \right)|\mathcal{S}| + \log_{\ubar{\kappa}^{-1}}\frac{\bar{\sigma}}{\ubar{\sigma}}.
	$$
\end{lemma}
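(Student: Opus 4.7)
The plan is to turn the multiplicative updates of $\sigma_k$ into additive bookkeeping on $\log \sigma_k$, and then trade $|\mathcal{U}|$ against $|\mathcal{S}|$ using the envelope $[\ubar{\sigma},\bar{\sigma}]$ on the regularization parameter.

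For the first scenario, I would fix a terminal index $K \in \mathcal{K}$, partition $\{0,\dots,K-1\}$ into the successful set $\mathcal{S}_K$ and the unsuccessful set $\mathcal{U}_K$, and write the telescoping identity
\[
    \log \sigma_K - \log \sigma_0 \;=\; \sum_{k \in \mathcal{S}_K} \log(\sigma_{k+1}/\sigma_k) \;+\; \sum_{k \in \mathcal{U}_K} \log(\sigma_{k+1}/\sigma_k).
\]
Next I would substitute the per-iteration ratio bounds $\sigma_{k+1}/\sigma_k \ge \ubar{\kappa}$ on $\mathcal{S}_K$ (from the shrinkage rule) and $\sigma_{k+1}/\sigma_k \ge \bar{\kappa}$ on $\mathcal{U}_K$ (the standard quantitative expansion factor in adaptive-regularization schemes); together with the envelope $\ubar{\sigma} \le \sigma_0$ (enforced after at most one successful step by the floor in $\max\{\ubar{\kappa}\sigma_k,\ubar{\sigma}\}$) and $\sigma_K \le \bar{\sigma}$, this yields the single scalar inequality
\[
    \log(\bar{\sigma}/\ubar{\sigma}) \;\ge\; |\mathcal{S}_K|\log\ubar{\kappa} \;+\; |\mathcal{U}_K|\log\bar{\kappa}.
\]
Solving for $|\mathcal{U}_K|$ (using $\log\bar{\kappa} > 0$ and $\log\ubar{\kappa} < 0$) and adding $|\mathcal{S}_K|$ to both sides produces the claimed bound on $|\mathcal{K}_K| = |\mathcal{S}_K|+|\mathcal{U}_K|$. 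Finiteness of $\mathcal{S}$ then forces $\mathcal{K}$ to be finite, and passing $K \to \max \mathcal{K}$ recovers the first inequality.

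The second scenario is the mirror image: $\mathcal{S}$ expands and $\mathcal{U}$ shrinks. Swapping the roles of $\ubar{\kappa}$ and $\bar{\kappa}$ in the partition of the telescoped sum---while now leveraging the lower envelope $\ubar{\sigma} \le \sigma_K$ instead of an upper envelope on $\sigma_K$---produces the analogous inequality. There is no genuine obstacle here; the argument is essentially a signed logarithmic count, and the main thing to keep straight is which of $\log\ubar{\kappa}, \log\bar{\kappa}$ is the operative lower bound on each index set. The only minor subtlety is the boundary case where $\sigma_0$ initially lies outside $[\ubar{\sigma},\bar{\sigma}]$, but any such transient resolves within finitely many iterations and is absorbed into the additive constant $\log_{\bar{\kappa}}(\bar{\sigma}/\ubar{\sigma})$ (resp.\ $\log_{\ubar{\kappa}^{-1}}(\bar{\sigma}/\ubar{\sigma})$).
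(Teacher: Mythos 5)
Your proposal is correct and follows the same argument as the paper: both derive $\bar{\sigma}/\ubar{\sigma}\ge\ubar{\kappa}^{|\mathcal{S}\cap[k]|}\bar{\kappa}^{|\mathcal{U}\cap[k]|}$ by iterating the per-step ratio bounds, take logarithms, and rearrange to bound $|\mathcal{U}|$ (hence $|\mathcal{K}|=|\mathcal{S}|+|\mathcal{U}|$) in terms of $|\mathcal{S}|$; your telescoped-$\log$ form is simply the additive rewriting of the paper's product inequality, and the $K\to\infty$ limit step matches the paper's passage to $|\mathcal{U}|$. One remark: the lower bound $\sigma_{k+1}/\sigma_k\ge\bar{\kappa}$ on $\mathcal{U}$, which both you and the paper's proof genuinely need, runs in the opposite direction from the lemma's stated hypothesis $\sigma_{k+1}\le\bar{\kappa}\sigma_k$ (and symmetrically in the second scenario) --- this looks like a sign typo in the statement, and the concrete updates in Algorithms~\ref{alg:ar} and~\ref{alg:tr} do supply the required bound --- and your aside that $\sigma_0\ge\ubar{\sigma}$ is ``enforced after at most one successful step by the floor'' is imprecise (the floor constrains $\sigma_{k+1}$ for $k\in\mathcal{S}$, not $\sigma_0$), though this is moot since Algorithm~\ref{alg:ar} takes $\sigma_0\in[\ubar{\sigma},\infty)$ as an input condition.
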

\begin{proof}
	We prove the first bound; the second bound can be derived similarly.
	For any $k\in \mathbb{N}$, by the assumptions, we have
	$$
		\bar{\sigma} \ge \sigma_k \ge \sigma_0 \cdot \ubar{\kappa}^{|\mathcal{S}\cap [k]|} \cdot \bar{\kappa}^{|\mathcal{U}\cap [k]|} \ge \ubar{\sigma} \cdot \ubar{\kappa}^{|\mathcal{S}\cap [k]|} \cdot \bar{\kappa}^{|\mathcal{U}\cap [k]|}.
	$$
	Taking the logarithm of both sides and rearranging the above inequality gives
	$$
		|\mathcal{U}\cap [k]| \le -\frac{\log \ubar{\kappa}}{\log \bar{\kappa}}\cdot |\mathcal{S}\cap [k]| + \log_{\bar{\kappa}} \frac{\bar{\sigma}}{\ubar{\sigma}}
		\le -\frac{\log \ubar{\kappa}}{\log \bar{\kappa}}\cdot |\mathcal{S}| + \log_{\bar{\kappa}} \frac{\bar{\sigma}}{\ubar{\sigma}},
	$$
	We get the finite bound of $|\mathcal{U}|$ by letting $k\to\infty$, and then the result follows.
\end{proof}

\begin{remark}
	\cref{lem:K} directly gives a nonasymptotic bound:
	$
		|\mathcal{K}| = O\left(|\mathcal{S}| + \log (\bar{\sigma} /\ubar{\sigma})\right).
	$
\end{remark}

To explicitly bound $|\mathcal{K}|$, we still need to validate the two assumptions in \cref{lem:K}: providing a bound of $|\mathcal{S}|$ and an upper/lower bound of $\sigma_k$.
For the bound of $|\mathcal{S}|$, if we can establish a minimal decrease in the objective function during successful iterations, then since $f$ is bounded below, the number of successful iterations can be bounded.

To this end, we further partition $\mathcal{S}$ into the following index sets
$$
	\begin{array}{ll}
		\mathcal{S}_{L}\coloneqq \{ k\in \mathcal{S}: \|\grad f(x_k)\| \le \epsilon_{g} \},\quad
		\mathcal{S}_{G}\coloneqq \{ k\in \mathcal{S}: \|\grad f(x_k)\| > \epsilon_{g} \},                                                                      \\
		\mathcal{S}_{GL}\coloneqq  \{ k\in \mathcal{S}: \|\grad f(x_k)\| > \epsilon_{g} \text{ and } \|\grad f(x_{k+1})\| \le \epsilon_{g} \},\quad \text{and} \\
		\mathcal{S}_{GG}\coloneqq  \{ k\in \mathcal{S}: \|\grad f(x_k)\| > \epsilon_{g} \text{ and } \|\grad f(x_{k+1})\| > \epsilon_{g} \}.                   \\
	\end{array}
$$
We list some observations concerning the above index sets. First, $\mathcal{S} = \mathcal{S}_{L}\sqcup \mathcal{S}_{G}$ and $\mathcal{S}_{G} = \mathcal{S}_{GL}\sqcup \mathcal{S}_{GG}$. Moreover, if only the first-order stationarity is targeted, $|\mathcal{S}| = |\mathcal{S}_{GG}| + 1$. Also, for $k\in \mathcal{S}_{GL}$, any immediate successful iteration following $k$ (if exists) must fall into the index set $\mathcal{S}_{L}$, implying that $|\mathcal{S}_{GL}|\le |\mathcal{S}_{L}| + 1$.
Furthermore, $\eta_k$ is returned by \cref{line:1} of \cref{alg:arn} if $k\in \mathcal{S}_{G}$ and by \cref{line:2} if $k\in \mathcal{S}_{L}$.

\begin{lemma}[Number of successful iterations]\label{lem:S}
	For $k\in \mathcal{S}$, if \cref{line:1,line:2} in \cref{alg:arn} produce a minimal decrease of $\epsilon_1$ and $\epsilon_2$ in $f$ respectively, we have
	$$
		|\mathcal{S}| \le 1+ \begin{cases}
			(f_0 - f_{\min})\cdot \epsilon_{1}^{-1},\quad                 & \textup{for first-order stationarity,}  \\
			2(f_0 - f_{\min})\cdot (\epsilon_{1}\w \epsilon_2)^{-1},\quad & \textup{for second-order stationarity.}
		\end{cases}
	$$
\end{lemma}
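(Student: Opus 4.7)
The argument reduces to telescoping the monotone decrease of $f$ across successful iterations against the global budget $f_0-f_{\min}$. The main step is to use the $\mathcal{S}$-decomposition already introduced to identify which subproblem branch of \cref{alg:arn} produced each accepted step, and hence which of the decrease amounts $\epsilon_1$ or $\epsilon_2$ applies to that iteration by hypothesis.

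For first-order stationarity the algorithm terminates as soon as $\|g_{k+1}\|\le\epsilon_g$, so at most one successful iteration (the transitional one leading into the \textbf{Return} branch) can fail to lie in $\mathcal{S}_{GG}$; every other $k\in\mathcal{S}$ has $\|g_k\|>\epsilon_g$, so $\eta_k$ is produced by \cref{line:1} and the hypothesis gives $f(x_k)-f(x_{k+1})\ge\epsilon_1$. Telescoping these decreases along $\mathcal{S}_{GG}$ and using $f\ge f_{\min}$ yields $|\mathcal{S}_{GG}|\cdot\epsilon_1\le f_0-f_{\min}$, and the ``$+1$'' absorbs the possible transitional iteration, giving the first bound.

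For second-order stationarity I would decompose $\mathcal{S}=\mathcal{S}_L\sqcup\mathcal{S}_G$ and argue on each piece: for $k\in\mathcal{S}_L$ the first-order test passes at $x_k$, so control falls through to \cref{line:2} and $\eta_k$ produces a decrease of at least $\epsilon_2$; for $k\in\mathcal{S}_G$ the first-order test fails, so $\eta_k$ comes from \cref{line:1} with decrease at least $\epsilon_1$. Telescoping each piece against $f_0-f_{\min}$ separately gives $|\mathcal{S}_L|\le(f_0-f_{\min})/\epsilon_2$ and $|\mathcal{S}_G|\le(f_0-f_{\min})/\epsilon_1$; summing and bounding $1/\epsilon_1+1/\epsilon_2\le 2/(\epsilon_1\w\epsilon_2)$ yields the second bound, with the ``$+1$'' again accommodating the transitional iteration. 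There is no serious obstacle here; the one detail worth watching is that the iteration in which the algorithm ultimately invokes \textbf{Return} computes no $\eta_k$ and so is not in $\mathcal{S}$, meaning no decrease contribution is lost in the telescoping and the accounting above is consistent.
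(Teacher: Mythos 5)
Your argument for first-order stationarity matches the paper's. For second-order stationarity, however, there is a genuine gap: you telescope over all of $\mathcal{S}_G$, treating every $k\in\mathcal{S}_G$ as producing a decrease of at least $\epsilon_1$, but that is not what is actually available when the lemma is invoked. The quantity $\epsilon_1$ is supplied downstream by \cref{lem:obj-dec-ar} (and its RTR analogue), whose $\epsilon_1$-type bound is stated and proved only for $k\in\mathcal{S}_{GG}$: the step-size lower bound comes from feeding $\|g_{k+1}\|>\epsilon_g$ into \cref{lem:tc} to get $\epsilon_g\lesssim \|\eta_k\|^{1+\alpha}+\bar\sigma\|\eta_k\|^{1+\omega}$. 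For $k\in\mathcal{S}_{GL}$ one has $\|g_{k+1}\|\le\epsilon_g$, so this chain breaks and the decrease at such an iteration can be arbitrarily small; your ``$|\mathcal{S}_G|\le (f_0-f_{\min})/\epsilon_1$'' is therefore unjustified. It is a symptom of this gap that your version of the bound does not actually need the ``$+1$'' in the second-order case, whereas the paper's does.

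The paper closes the gap differently: it telescopes only over $\mathcal{S}_{GG}\sqcup\mathcal{S}_L$ (dropping the nonnegative $\mathcal{S}_{GL}$ contributions), giving $|\mathcal{S}_{GG}|+|\mathcal{S}_L|\le (f_0-f_{\min})/(\epsilon_1\w\epsilon_2)$, and then controls $\mathcal{S}_{GL}$ combinatorially: each $k\in\mathcal{S}_{GL}$ is followed by a successful iteration whose starting gradient is below $\epsilon_g$, i.e.\ an element of $\mathcal{S}_L$, so $|\mathcal{S}_{GL}|\le |\mathcal{S}_L|+1$. Combining $|\mathcal{S}|=|\mathcal{S}_{GG}|+|\mathcal{S}_{GL}|+|\mathcal{S}_L|\le |\mathcal{S}_{GG}|+2|\mathcal{S}_L|+1$ with the telescoping bound gives the stated $1+2(f_0-f_{\min})/(\epsilon_1\w\epsilon_2)$. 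This combinatorial charge of $\mathcal{S}_{GL}$ against $\mathcal{S}_L$ is the idea missing from your proposal; without it the second-order bound does not follow from the decrease estimates the lemma is ultimately paired with.
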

\begin{proof}
	Since the objective function value only changes when the iteration is successful, we have
	$$
		\begin{aligned}
			f_0 - f_{\min} & \ge \sum_{k\in \mathcal{S}} (f_{k}-f_{k+1})
			\ge \sum_{k\in \mathcal{S}_{GG}} (f_{k} - f_{k+1}) + \sum_{k\in \mathcal{S}_{L}} (f_{k} - f_{k+1}) \\
			               & \ge |\mathcal{S}_{GG}|\cdot \epsilon_1 + |\mathcal{S}_{L}|\cdot \epsilon_2
			\ge (|\mathcal{S}_{GG}| + |\mathcal{S}_{L}|) \cdot (\epsilon_1 \w \epsilon_2).
		\end{aligned}
	$$
	For first-order stationarity, $|\mathcal{S}_{L}|=0$. Thus, we have
	\[
		|\mathcal{S}| \le |\mathcal{S}_{GG}| + 1 \le 1 + (f_0-f_{\min})\cdot \epsilon_{1}^{-1}
		.\]
	For second-order stationarity, any $k$ in $\mathcal{S}_{GL}$ is followed by a successful iteration in $\mathcal{S}_{L}$. Thus, $|\mathcal{S}_{GL}| \le |\mathcal{S}_{L}|$. Then, we have
	$$
		|\mathcal{S}| \le |\mathcal{S}_{GG}| + 2|\mathcal{S}_{L}| + 1 \le 1+ 2(f_0-f_{\min})\cdot (\epsilon_{1}\w \epsilon_2)^{-1}
		.$$
\end{proof}

By \cref{lem:K,lem:S}, we get an iteration complexity guarantee once we determine the minimal successful decreases $\epsilon_1$ and $\epsilon_2$, as well as the regularization parameter bound $\bar{\sigma}$/$\ubar{\sigma}$ for specific algorithms.
In the following sections, our objective is to concretize these values.

\begin{corollary}[Total number of iterations]\label{cor:K}
	If the conditions in \cref{lem:K,lem:S} are satisfied, we have
	$$
		|\mathcal{K}| \le \begin{cases}
			O(\epsilon_1^{-1} + \log \bar{\sigma} /\ubar{\sigma}), \quad               & \textup{for first-order stationarity},  \\
			O((\epsilon_1\w\epsilon_2)^{-1} + \log \bar{\sigma} /\ubar{\sigma}), \quad & \textup{for second-order stationarity}.
		\end{cases}
	$$
\end{corollary}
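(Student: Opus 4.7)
The plan is to obtain \cref{cor:K} as an immediate consequence of the two preceding lemmas, essentially by chaining their bounds and absorbing all algorithm-independent constants into the asymptotic notation. First, I would invoke \cref{lem:K}, whose two cases both yield an inequality of the form
$$
|\mathcal{K}| \;\le\; \left(1 - \frac{\log \ubar{\kappa}}{\log \bar{\kappa}}\right) |\mathcal{S}| + \log_{\bar{\kappa}} \frac{\bar{\sigma}}{\ubar{\sigma}},
$$
or its symmetric counterpart. Since $\ubar{\kappa}$ and $\bar{\kappa}$ are algorithm control parameters independent of $\epsilon_g$ and $\epsilon_H$, the prefactor multiplying $|\mathcal{S}|$ is an absolute constant, and the logarithmic term is $O(\log(\bar{\sigma}/\ubar{\sigma}))$. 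This recovers the non-asymptotic form already noted in the remark following \cref{lem:K}, namely $|\mathcal{K}| = O(|\mathcal{S}|) + O(\log(\bar{\sigma}/\ubar{\sigma}))$.

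Next I would substitute the bound from \cref{lem:S} for $|\mathcal{S}|$. In the first-order stationarity case, $|\mathcal{S}| \le 1 + (f_0 - f_{\min}) \epsilon_1^{-1}$, which is $O(\epsilon_1^{-1})$ because $f_0 - f_{\min}$ is a problem-dependent constant and $\epsilon_1$ will tend to zero with the tolerance. In the second-order stationarity case, the analogous bound is $O((\epsilon_1 \wedge \epsilon_2)^{-1})$. Adding $O(\log(\bar{\sigma}/\ubar{\sigma}))$ to each of these produces exactly the two branches in the statement.

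There is no real obstacle here; the corollary is a bookkeeping combination of \cref{lem:K,lem:S}. The only thing to be careful about is ensuring that the hypotheses needed by \cref{lem:K} (monotonic adaptation of $\sigma_k$ together with the appropriate uniform bound $\bar\sigma$ or $\ubar\sigma$) and by \cref{lem:S} (existence of per-iteration minimal decreases $\epsilon_1$ and $\epsilon_2$) are explicitly assumed, which the corollary does by referencing both lemmas. The substantive work of verifying these hypotheses and determining $\epsilon_1$, $\epsilon_2$, $\bar\sigma$, $\ubar\sigma$ for RTR and RAR is left to the dedicated sections, as stated in the paragraph preceding the corollary.
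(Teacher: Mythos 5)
Your proposal is correct and matches the paper's (unstated) argument exactly: the corollary is obtained by substituting the bound on $|\mathcal{S}|$ from \cref{lem:S} into the bound on $|\mathcal{K}|$ from \cref{lem:K} and absorbing the $\epsilon$-independent constants ($\ubar{\kappa}$, $\bar{\kappa}$, $f_0 - f_{\min}$) into the $O(\cdot)$ notation. Nothing more is needed.
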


\ifSubfilesClassLoaded{\bibliography{ic}}{}
\section{Riemannian Adaptive \texorpdfstring{$2\!+\!\alpha$}{2+alpha} Regularization} \label{sec:rar}

\ifSubfilesClassLoaded{\maketitle}{}

Adaptive regularization with cubics (ARC) \citep{cartis2011Adaptivecubica,agarwal2021Adaptiveregularization} adds a cubic regularization term to the vanilla quadratic model function \cref{eq:model}.
However, as we will shortly elucidate, for $C^{2,\mu}$ objective functions and $C^{1,\nu}$ retractions, cubics \textit{under-regularize}.
Therefore, for the time being, we first consider a general Riemannian adaptive $\ar{\omega}$ regularization (RAR), $\omega\in(0,1]$.
Specifically, the Riemannian $\ar{\omega}$ regularized model problem at $x_k\in\M$ reads:
\begin{align}\label{eq:model-ar}
	\min_{\eta \in T_{x_k}\mathcal{M}} \quad & \bar{m}^{\mathrm{ar}}_{x_k}(\eta;\sigma_k) \coloneqq f(x_k) + \left< \eta, \grad f(x_k) \right> + \frac{1}{2} \langle \eta,\hess f(x_k)[\eta]\rangle + \frac{\sigma_k}{2+\omega}\|\eta\|^{2+\omega}.
\end{align}
{We inherit the method framework in \cref{alg:arn}
and solve the subproblem with termination criteria \cref{eq:tc-c}, \cref{eq:tc-1}, and \cref{eq:tc-2}.
The method is presented in Algorithm~\ref{alg:ar}.
We require the subproblem solver to enjoy the following property.}

\begin{assumption}[Gradient bound by step size]\label{lem:g-bound}
	In \cref{alg:ar}, the subproblem solver for approximately solving  \eqref{eq:model-ar} returns a vector $\eta_k$ satisfying
	$$\|g_k\|\le \beta_{H}\|\eta_k\| + \|\g \phi(\eta_k;\sigma_k)\|.$$
\end{assumption}

{Specifically, we propose employing a Lanczos-based Krylov subspace method as the subproblem solver {(see \cref{sec:kry} and \cite{cartis2011Adaptivecubica,agarwal2021Adaptiveregularization,carmon2020FirstOrderMethods})}.
Krylov subspace methods satisfy \cref{lem:g-bound} (verified in \cref{sec:pf-g-bound}).
Similar to \cref{asmp}, we assume \cref{lem:g-bound} holds for all subproblem solvers employed.}

\begin{algorithm}[!th]
	\caption{Riemannian Adaptive $\ar{\omega}$ Regularization}
	\label{alg:ar}
	{
	\Input{Initial point $x_0\in\mathcal{M}$, tolorance $\epsilon_{g}, \epsilon_{H} \in (0,1)$, {regularization} adaptation parameters $\kappa_3>\kappa_2 \ge 1 > \kappa_1 > 0$, $\ubar{\sigma} > 0, \sigma_0 \in [\ubar{\sigma}, \infty)$, control parameters $1> \varrho_2\ge\varrho_1 > 0$}
	\Output{Last iterate $x_k$}
	}
	\For{$k=0,1,\dots$}{
		\uIf {$\|g_k\| > \epsilon_{g}$} { \label{line:ar_sub_beg}
			obtain $\eta_k$ by solving problem \eqref{eq:model-ar} with termination criteria~(\ref{eq:tc-1}\tcand\hyperref[eq:tc-c]{C})\;
		} \uElseIf {$H_k\not\succeq -\epsilon_{H}I$} {
			obtain $\eta_k$ by solving problem \eqref{eq:model-ar} with termination criteria~(\ref{eq:tc-2}\tcand\hyperref[eq:tc-c]{C})\;
		} \Else {
			\Return $x_k$\;
		}\label{line:ar_sub_end}

		compute $\rho_k$ using \eqref{eq:rho}\;
		set $\sigma_{k+1} = \left\{\begin{aligned}&[\max \{ \ubar{\sigma}, \kappa_1\sigma_k \}, \sigma_k], &&\rho_k > \varrho_2, && \texttt{// very successful}\\ &[\sigma_k, \kappa_2\sigma_k], && \varrho_1 \le \rho_k \le \varrho_2, && \texttt{// successful}\\ &[\kappa_2\sigma_k, \kappa_3\sigma_k], && \text{o.w.} && \texttt{// unsuccessful}\end{aligned}\right.$\;
		set $x_{k+1} = \begin{cases}R_{x_k}(\eta_k), &\rho_k \ge \varrho_1,\\  x_k & \text{o.w.}\end{cases}$
	}
\end{algorithm}

\subsection{Iteration Complexity of RAR}

As discussed in \cref{sec:ic-frame}, obtaining an iteration complexity bound of RAR only requires establishing an upper bound of $\sigma_k$ and the minimal decrease in $f$ for $k\in \mathcal{S}$.
When using a second-order retraction and $\omega = \mu$, one can readily derive an upper bound of $\sigma_k$, akin to the Euclidean case, as demonstrated in \cite[Lemma 2.2]{birgin2017Worstcaseevaluation} and \cite[Lemma 5.2]{cartis2011Adaptivecubica}.
However, a general retraction introduces a gradient dependency in the model-actual difference (see \cref{lem:ma-diff} or \cite[Assumption 4]{agarwal2021Adaptiveregularization}).
Notably, \citet{agarwal2021Adaptiveregularization} introduced an additional smoothness assumption on the composition $f\circ R$ (\cite[Assumption 2]{agarwal2021Adaptiveregularization}) to bypass this challenge. We will show that this additional assumption is unnecessary.
Additionally, the general order $\omega$ introduces a step-size dependency in $\sigma_k$, which vanishes when $\omega = \mu\w\nu$ (see \eqref{eq:u-2}).
These two dependencies cause $\sigma_k$ much more challenging to bound for RAR with $\ar{\omega}$ regularization.
To address the step-size dependency, we will utilize the following proposition.

\begin{proposition}[Step-size bound]\label{prop:s}
	If the Cauchy condition \cref{eq:tc-c} is satisfied, we have
	$$
		\|\eta_k\| \le \left(\frac{3(\beta_{H}+1)}{\sigma_k}\right)^{1 /\omega} \vee \left( \|g_k\| \w \left( \frac{6\|g_k\|}{\sigma_k}\right)^{1 /(1+\omega)}  \right).
	$$
\end{proposition}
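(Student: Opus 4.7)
The plan is to start from the Cauchy termination criterion \eqref{eq:tc-c}, which via \eqref{eq:cauchy} gives
$$
m_{x_k}(0) - m_{x_k}(\eta_k) \;\ge\; \varphi(\eta_k;\sigma_k) = \tfrac{\sigma_k}{2+\omega}\|\eta_k\|^{2+\omega}.
$$
Expanding the left-hand side using \eqref{eq:model}, applying Cauchy--Schwarz to the linear term and the operator-norm bound $\|H_k\|_{\mathrm{op}} \le \beta_H$ from \cref{asmp} to the quadratic term, and dividing through by $\|\eta_k\|$ (the case $\eta_k = 0$ is trivial), I obtain the master inequality
$$
\|g_k\| + \tfrac{\beta_H}{2}\|\eta_k\| \;\ge\; \tfrac{\sigma_k}{2+\omega}\|\eta_k\|^{1+\omega} \;\ge\; \tfrac{\sigma_k}{3}\|\eta_k\|^{1+\omega},
$$
using $2+\omega \le 3$. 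Everything in what follows is a case analysis of this single inequality.

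The case split is on whether $\|\eta_k\|$ exceeds the first threshold $A \coloneqq (3(\beta_H+1)/\sigma_k)^{1/\omega}$. If $\|\eta_k\| \le A$, we are done. Otherwise $\sigma_k\|\eta_k\|^\omega > 3(\beta_H+1)$, which I will use twice. First, multiplied through by $\|\eta_k\|/6$, it gives $\tfrac{\beta_H}{2}\|\eta_k\| < \tfrac{\sigma_k}{6}\|\eta_k\|^{1+\omega}$; plugging this back into the master inequality to absorb the $\beta_H$ term yields $\|g_k\| \ge \tfrac{\sigma_k}{6}\|\eta_k\|^{1+\omega}$, i.e.\ $\|\eta_k\| \le (6\|g_k\|/\sigma_k)^{1/(1+\omega)}$. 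Second, the stronger form $\tfrac{\sigma_k}{3}\|\eta_k\|^{1+\omega} > (\beta_H+1)\|\eta_k\|$, combined with the master inequality, gives $\|g_k\| + \tfrac{\beta_H}{2}\|\eta_k\| > \beta_H\|\eta_k\| + \|\eta_k\|$, so $\|g_k\| > \|\eta_k\|$. Together these show $\|\eta_k\| \le \|g_k\| \wedge (6\|g_k\|/\sigma_k)^{1/(1+\omega)}$ in this branch, and combining with the other branch yields the stated bound.

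The only delicate point I anticipate is the bookkeeping of constants in the second use of $\sigma_k\|\eta_k\|^\omega > 3(\beta_H+1)$: the ``$+1$'' in $\beta_H+1$ is precisely what allows the term $\|\eta_k\|$ (rather than only $\beta_H\|\eta_k\|$) to be extracted on the right-hand side, which in turn is what forces $\|g_k\| > \|\eta_k\|$ rather than merely $\|g_k\| \gtrsim \|\eta_k\|/\beta_H$. If instead one chose the threshold with $\beta_H$ alone, one would recover only the $C$-bound but lose the $B$-bound, so getting the mixed $\vee/\wedge$ structure of the statement correct depends on this choice.
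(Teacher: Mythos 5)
Your proof is correct. You start from the same master inequality as the paper — namely
$\|g_k\|\|\eta_k\| + \tfrac{\beta_H}{2}\|\eta_k\|^2 \ge \tfrac{\sigma_k}{2+\omega}\|\eta_k\|^{2+\omega}$
obtained from \eqref{eq:cauchy} — but you organize the case analysis differently. The paper derives two separate $\vee$-bounds: one by restricting to $\|\eta_k\|\ge\|g_k\|$ (giving a bound of the form $A'\vee\|g_k\|$), and another by splitting the regularization term $\tfrac{\sigma_k}{2+\omega}\|\eta_k\|^{2+\omega}$ into two equal halves and noting that at least one of the resulting brackets must be nonnegative (giving $A''\vee C$); it then merges the two via the lattice identity $(a\vee b)\wedge(a\vee c)=a\vee(b\wedge c)$ and loosens constants using $2+\omega\le3$. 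You instead dichotomize once on whether $\|\eta_k\|$ exceeds the single threshold $A=(3(\beta_H+1)/\sigma_k)^{1/\omega}$, and in the large-step branch extract both the $\|g_k\|$-bound and the $(6\|g_k\|/\sigma_k)^{1/(1+\omega)}$-bound from the same inequality $\sigma_k\|\eta_k\|^\omega > 3(\beta_H+1)$, used with two different normalizations. This produces the $\vee/\wedge$ structure directly without the lattice algebra step. What your version buys is a shorter, cleaner case tree and no need to carry two parallel decompositions; what the paper's buys is that each constituent bound is visibly tight (constants $(2+\beta_H)(2+\omega)/2$, $\beta_H(2+\omega)$, $2(2+\omega)$) before being relaxed to the common form. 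Your remark that the ``$+1$'' in $\beta_H+1$ is exactly what is needed to force $\|g_k\| > \|\eta_k\|$ is on point and explains the threshold choice; in the paper's proof that same ``$+1$'' arises implicitly from the substitution $\|g_k\|\|\eta_k\|\le\|\eta_k\|^2$ in the first decomposition.
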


The proposition is a direct consequence of the Cauchy condition, and we defer its proof to \cref{sec:pf-s}.
Furthermore, we will show that the step-size has a lower bound determined by $\epsilon_{g}$ and $\epsilon_{H}$.
To address the gradient dependency, in addition to \cref{lem:g-bound}, we also need the following proposition.

\begin{proposition}[Gradient bound]\label{prop:g}
	The gradient sequence generated by \cref{alg:ar} is bounded, i.e., there exists $\beta_{g}>0$ such that $\|g_k\|\le \beta_{g}$ for any $k\in\mathcal{K}$.
\end{proposition}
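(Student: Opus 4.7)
The plan is to derive the gradient bound in an algorithm-agnostic manner: the proposition is essentially a statement about points lying in the sublevel set $\mathcal{M}_{f_0}$, once we verify that the algorithm's iterates never leave this set. No fine information about the regularization parameter $\sigma_k$ or about the subproblem solver is needed, beyond the Cauchy-type inequality \cref{eq:cauchy} that is already in hand.

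First, I would show that $f(x_k)$ is non-increasing along the trajectory. For $k\in\mathcal{U}$ this is immediate, since $x_{k+1}=x_k$. For $k\in\mathcal{S}$, the Cauchy termination criterion \cref{eq:tc-c} together with $\phi(0;\sigma_k)=0$ gives $\bar{m}_{x_k}(\eta_k)\le\bar{m}_{x_k}(\eta_k^{\mathrm{C}})\le\bar{m}_{x_k}(0)=f_k$, so by \cref{eq:cauchy} the unregularized model decrease $m_{x_k}(0)-m_{x_k}(\eta_k)\ge \phi(\eta_k;\sigma_k)$ is non-negative. The acceptance rule $\rho_k\ge\varrho_1>0$ in \cref{eq:rho} then forces $f_{k+1}\le f_k$, so $x_k\in\mathcal{M}_{f_0}$, and hence $\|H_k\|_{\mathrm{op}}\le\beta_H$ by \cref{asmp}, for every $k\in\mathcal{K}$.

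Next, for any $x\in\mathcal{M}_{f_0}$ with $g\coloneqq\g f(x)\ne 0$, I would consider the unit-speed geodesic $\gamma(s)\coloneqq\exp_x(-s g/\|g\|)$, defined for all $s\ge 0$ by completeness of $\M$, and set $h(s)\coloneqq f(\gamma(s))$. Using the geodesic equation $\nabla_{\gamma'(s)}\gamma'(s)=0$, the Riemannian chain rule yields $h'(0)=-\|g\|$ and $h''(s)=\langle\hess f(\gamma(s))[\gamma'(s)],\gamma'(s)\rangle$, so $|h''(s)|\le\beta_H$ as long as $\gamma(s)\in\mathcal{M}_{f_0}$. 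A bootstrap argument then shows that the geodesic cannot leave $\mathcal{M}_{f_0}$ on $[0,\|g\|/\beta_H]$: while the path stays in the sublevel set, the bound $|h''|\le\beta_H$ yields $h'(s)\le-\|g\|+\beta_H s<0$ for $s<\|g\|/\beta_H$, so $h$ is strictly decreasing there and remains below $f_0$. Integrating once more produces
\[
h(\|g\|/\beta_H)\le f(x)-\frac{\|g\|^2}{2\beta_H},
\]
which combined with $h(\|g\|/\beta_H)\ge f_{\min}$ gives the uniform estimate $\|g\|^2\le 2\beta_H(f(x)-f_{\min})\le 2\beta_H(f_0-f_{\min})$. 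Setting $\beta_g\coloneqq\sqrt{2\beta_H(f_0-f_{\min})}$ and specializing to $x=x_k$ finishes the argument.

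The main obstacle I anticipate is making the bootstrap step rigorous: one needs to argue, by continuity of $h$ and a standard open/closed dichotomy applied to the maximal interval on which $\gamma(s)\in\mathcal{M}_{f_0}$, that this interval actually contains $[0,\|g\|/\beta_H]$ (if the first exit time $s^\star$ were smaller, then $h(s^\star)<f_0$ by the strict descent above, contradicting maximality). Everything else is a direct application of the Riemannian chain rule together with the standing hypotheses in \cref{asmp}.
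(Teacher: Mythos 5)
Your proof is correct, and it takes a genuinely different route from the paper. The paper adapts the contradiction argument of Cartis et al.\ (their Theorem 2.5 and Corollary 2.6): assuming $|\mathcal{S}|=\infty$ and arguing that if $\|g_k\|$ were unbounded, then via the Cauchy-decrease lemma (\cref{lem:cdec}) one shows $\sum_{k\in\mathcal S}\|\eta_k\|<\infty$ and the regularization parameter diverges, making $\{x_k\}$ a Cauchy sequence in the complete manifold $\M$; its convergence then contradicts the blow-up of the gradient. Your argument is algorithm-free and much more elementary: it is a Riemannian version of the classical estimate $\|\g f(x)\|^2\le 2\beta_H\bigl(f(x)-f_{\min}\bigr)$ for smooth functions, obtained by integrating $h''(s)=\langle \hess f(\gamma(s))[\gamma'(s)],\gamma'(s)\rangle$ along the descent geodesic and controlling the exit time from the sublevel set by an open/closed bootstrap (valid because $\mathcal M_{f_0}$ is closed). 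In exchange for re-proving that the iterates stay in $\mathcal M_{f_0}$ (which you do correctly from \cref{eq:tc-c}, \cref{eq:cauchy}, and the acceptance rule), you get a uniform explicit constant $\beta_g=\sqrt{2\beta_H(f_0-f_{\min})}$ rather than the paper's purely qualitative bound, and you avoid the Cauchy-decrease lemma and the summability machinery entirely. Note that your argument also works without modification for \cref{alg:tr}, whereas the paper's proof is tailored to the $\ar\omega$-regularized model; this actually strengthens \cref{prop:g} beyond what is stated.

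Two small points worth making explicit in a polished write-up: (i) the degenerate case $g_k=0$ should be dispatched separately (it is trivially bounded and your geodesic construction is undefined there); (ii) the constant $\beta_H$ must be strictly positive for $\|g\|/\beta_H$ to make sense, which is guaranteed by \cref{asmp}. Neither affects the substance of your argument.
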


In practice, there are various means to bound the gradient sequence. For instance, if the level set $\{ x\in\M:f(x)\le f_0 \}$ is compact or if the algorithm converges, $\{ g_k \}$ is bounded. We provide a proof of \cref{prop:g} adapted from \cite[Theorem 2.5]{cartis2011Adaptivecubica} in \cref{sec:pf-g-bound-uni} without any additional assumptions.
We are now ready to provide an upper bound of the regularization parameter.

\begin{lemma}[Regularization parameter upper bound]\label{lem:sigmabar}
	The regularization parameter has an upper bound
	$$
		\bar{\sigma}\coloneqq C_{\sigma}\left(\epsilon_{g}\w \epsilon_{H}^{1 /\alpha}\right)^{-[\omega-\alpha]_{+}},
	$$
	where $C_{\sigma}$ is a positive constant satisfying $C_\sigma\ge 1$ and $[x]_+ \coloneqq \max \{ 0,x \}$.
\end{lemma}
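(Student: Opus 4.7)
The plan is to show that once $\sigma_k$ surpasses a constant multiple of the proposed bound, the iteration must be very successful ($\rho_k\ge\varrho_2$); by the update rule this forces $\sigma_{k+1}\le\sigma_k$, and a one-step induction combined with the $\kappa_3$-cap on unsuccessful expansions caps the entire sequence at the claimed $\bar{\sigma}$ (after absorbing factors into $C_\sigma$).

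First I would bound $|1-\rho_k|$ by the ratio of the model–actual discrepancy to the model decrease. Taylor's theorem for $f\circ\exp_{x_k}$ (using the $\mu$-\holder continuous Hessian) bounds the geodesic discrepancy by a constant times $\|\eta_k\|^{2+\mu}$, and \cref{lem:ma-diff} (using the $\nu$-\holder continuous retraction differential) bounds the retraction-versus-exponential gap by constants times $\|\eta_k\|^{2+\nu}$, $\|\eta_k\|^{2+2\nu}$, and $\|g_k\|\|\eta_k\|^{1+\nu}$. The Cauchy condition \cref{eq:tc-c} via \cref{eq:cauchy} yields $m_{x_k}(0)-m_{x_k}(\eta_k)\ge\frac{\sigma_k}{2+\omega}\|\eta_k\|^{2+\omega}$, so
$$
|1-\rho_k|\lesssim\sigma_k^{-1}\bigl(\|\eta_k\|^{\mu-\omega}+\|\eta_k\|^{\nu-\omega}+\|\eta_k\|^{2\nu-\omega}+\|g_k\|\,\|\eta_k\|^{\nu-1-\omega}\bigr).
$$
The exponent $\nu-1-\omega$ is always negative and the $\|g_k\|$ prefactor is cumbersome; invoking \cref{lem:g-bound} in the form $\|g_k\|\le\beta_H\|\eta_k\|+\sigma_k\|\eta_k\|^{1+\omega}$ converts the last summand into $\beta_H\|\eta_k\|^{\nu-\omega}+\sigma_k\|\eta_k\|^\nu$, and absorbing constants gives
$$
|1-\rho_k|\lesssim\sigma_k^{-1}\bigl(\|\eta_k\|^{\mu-\omega}+\|\eta_k\|^{\nu-\omega}+\|\eta_k\|^{2\nu-\omega}\bigr)+\|\eta_k\|^\nu.
$$

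Next I would sandwich $\|\eta_k\|$. For the tail $\|\eta_k\|^\nu$ (not divided by $\sigma_k$), the upper bound of \cref{prop:s} together with \cref{prop:g} gives $\|\eta_k\|^\nu\le\bigl(3(\beta_H+1)/\sigma_k\bigr)^{\nu/\omega}\vee(6\beta_g/\sigma_k)^{\nu/(1+\omega)}$, which decays polynomially in $\sigma_k^{-1}$ at a rate independent of $\epsilon_g,\epsilon_H$; for $\sigma_k$ past a constant threshold $C_0=C_0(\beta_H,\beta_g)$ one additionally has $\|\eta_k\|\le 1$, making the nonnegative-exponent terms harmless. For each exponent among $\mu-\omega$, $\nu-\omega$, $2\nu-\omega$ that is \emph{negative} — a situation that only arises when $\omega>\alpha$ — I would appeal to step-size \emph{lower} bounds forced by the failing stationarity test: when $\|g_k\|>\epsilon_g$, \cref{lem:g-bound} forces $\|\eta_k\|\ge\epsilon_g/(2\beta_H)$ or $\|\eta_k\|\ge(\epsilon_g/(2\sigma_k))^{1/(1+\omega)}$; when $H_k\not\succeq-\epsilon_H I$, \cref{lem:tc} together with $\lambda_{\max}(\hess\phi)=(1+\omega)\sigma_k\|\eta_k\|^\omega$ forces $\|\eta_k\|\ge(\epsilon_H/2)^{1/\theta}$ or $\|\eta_k\|\ge(\epsilon_H/(2(1+\omega)\sigma_k))^{1/\omega}$. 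Since $\epsilon_g,\epsilon_H\le 1$, it is the $\sigma_k$-independent alternative that dominates the requirement on $\sigma_k$; substituting it into a negative-exponent term produces a contribution of order $\sigma_k^{-1}\epsilon_g^{-(\omega-\alpha)}$ or $\sigma_k^{-1}\epsilon_H^{-(\omega-\alpha)/\alpha}$, which collapses below any prescribed small constant once $\sigma_k\ge C_\sigma(\epsilon_g\wedge\epsilon_H^{1/\alpha})^{-(\omega-\alpha)}$.

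Finally, I would choose $C_\sigma$ large enough that $\sigma_k\ge\bar{\sigma}$ drives the whole right-hand side strictly below $1-\varrho_2$; then $\rho_k\ge\varrho_2$, the step is very successful, and $\sigma_{k+1}\le\sigma_k$. A one-step induction, together with the cap $\sigma_{k+1}\le\kappa_3\sigma_k$ in the unsuccessful branch, gives $\sigma_k\le\max\{\sigma_0,\kappa_3\bar{\sigma}\}$ throughout, which is the stated bound after inflating $C_\sigma$. The main obstacle will be the case-analysis bookkeeping: each stationarity branch bifurcates into two sub-cases, and one must verify that the binding sub-case is the $\sigma_k$-independent lower bound on $\|\eta_k\|$ rather than the $\sigma_k$-dependent one — otherwise one obtains the weaker exponent $(\omega-\alpha)/(1+\alpha)$ instead of $\omega-\alpha$. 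Sharpness of the $\min\{\mu,\nu,\theta\}$ dependence hinges precisely on this pairing, since $\epsilon_g,\epsilon_H\le 1$ makes the $\sigma_k$-independent alternative the strictly stronger requirement.
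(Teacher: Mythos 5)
Your argument is the contrapositive of the paper's: instead of bounding $\sigma_k$ directly under the hypothesis $\rho_k\le\varrho_2$, you show that $\sigma_k\ge\bar\sigma$ forces $\rho_k\ge\varrho_2$ (hence no expansion) and finish by a one-step induction with the $\kappa_3$-cap. The raw material is the same: the Taylor/retraction decomposition via \cref{lem:ma-diff}, the Cauchy lower bound \cref{eq:cauchy} on the model decrease, \cref{lem:g-bound} to absorb the gradient, and \cref{prop:s}/\cref{prop:g} to control the large-step regime. Structurally the two proofs are equivalent.

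There is, however, a real bookkeeping error in your second-order branch, and it matters precisely because you chose \emph{not} to pass through the paper's intermediate inequality \cref{eq:u-2}. The paper first establishes $\sigma_k\|\eta_k\|^\omega\le C_{\sigma,2}\|\eta_k\|^{\mu\wedge\nu}$ (a consequence of the ratio test), and only \emph{then} derives a single, $\sigma_k$-free lower bound $\|\eta_k\|\gtrsim\epsilon_H^{1/\alpha}$ by substituting this into the \cref{lem:tc} inequality. Working without \cref{eq:u-2}, you correctly get the two-alternative bound $\|\eta_k\|\ge(\epsilon_H/2)^{1/\theta}$ or $\|\eta_k\|\ge(\epsilon_H/(2(1+\omega)\sigma_k))^{1/\omega}$, and you assert that ``the $\sigma_k$-independent alternative dominates the requirement on $\sigma_k$.'' But computing both resulting thresholds shows this is backwards for the second-order case: the $\sigma_k$-independent alternative, substituted into $\sigma_k^{-1}\|\eta_k\|^{\alpha-\omega}$, demands only $\sigma_k\gtrsim\epsilon_H^{-(\omega-\alpha)/\theta}$, whereas the $\sigma_k$-dependent alternative yields $\sigma_k^{-\alpha/\omega}\epsilon_H^{-(\omega-\alpha)/\omega}$ and hence the strictly larger requirement $\sigma_k\gtrsim\epsilon_H^{-(\omega-\alpha)/\alpha}$ (recall $\alpha\le\theta$, so $(\omega-\alpha)/\alpha\ge(\omega-\alpha)/\theta$). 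It is the $\sigma_k$-dependent alternative that binds here. Your first-order comparison is right; your second-order one is inverted. The expression you then write down, $\sigma_k^{-1}\epsilon_H^{-(\omega-\alpha)/\alpha}$, happens to be the correct binding term, so your final threshold $\bar\sigma=C_\sigma(\epsilon_g\wedge\epsilon_H^{1/\alpha})^{-[\omega-\alpha]_+}$ is still adequate; but the justification given for it is wrong, and a reader who trusted your ``binding sub-case'' heuristic and used $\epsilon_H^{-(\omega-\alpha)/\theta}$ would obtain a too-weak bound. Adopting the paper's device --- use \cref{eq:u-2} to replace $\sigma_k\|\eta_k\|^\omega$ by $C_{\sigma,2}\|\eta_k\|^{\mu\wedge\nu}$ before lower-bounding $\|\eta_k\|$ --- collapses both alternatives into one $\sigma_k$-free bound $\|\eta_k\|\gtrsim\epsilon_H^{1/\alpha}$ and avoids this pitfall entirely.
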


\begin{proof}
	In this proof, we omit the superscript of $\bar{m}^{\mathrm{ar}}_{x_k}$ and the subscript $x_k$ of $\bar{m}_{x_k}$, $m_{x_k}$, and $R_{x_k}$.
	According to the update rule in \cref{alg:ar}, the regularization parameter only expands when $\rho_k\le\varrho_{2}$. Therefore, we only need to derive the upper bound of $\sigma_k$ when $\rho_k \le \varrho_2$, in which case \eqref{eq:rho} gives
	$$
		\begin{aligned}
			\varrho_2\left({m}(0)-{m}\left(\eta_k\right)\right)
			 & \ge f\left(x_k\right)- f\left(R\left(\eta_k\right)\right)      \\
			 & ={m}(0)-m\left(\eta_k\right)
			+ m\left(\eta_k\right)- f\left(\exp \left(\eta_k\right)\right)
			+ f\left(\exp (\eta_k)\right)- f\left(R\left(\eta_k\right)\right) \\
			 & \geqslant {m}(0)-m\left(\eta_k\right)-
			|m(\eta_k) - f(\exp (\eta_k))| - |f(\exp (\eta_k)) - f(R(\eta_k))|,
		\end{aligned}
	$$
	which can be reformulated as
	\begin{equation}\label{eq:oa-diff-1}
		(1-\rh_2) \left(\bar{m}(0)-\bar{m}\left(\eta_k\right) + \phi(\eta_k;\sigma_k)\right) \le |f(R(\eta_k)) - f(\exp (\eta_k))| + |f(\exp (\eta_k)) - m(\eta_k)|.
	\end{equation}
	The first term in the right hand side of \cref{eq:oa-diff-1} can be bounded by \cref{lem:ma-diff}.
	For the second term, by the Taylor expansion on manifolds (see, e.g., \cite[Lemma 3]{zhang2023RiemannianTrust}), there exists $\tau\in[0,1]$ such that
	$$
		|f(\exp (\eta_k)) - m(\eta_k)| = \left| \frac{1}{2}\left<\eta_k, (H_k - P_{\gamma}^{\tau\to 0}H_{\tau}P_{\gamma}^{0\to\tau})\eta_k \right> \right|
		\le \frac{1}{2}\|H_k -P_{\gamma}^{\tau\to 0}H_{\tau}P_{\gamma}^{0\to\tau}\|\|\eta_k\|^2.
	$$
	where $\gamma$ is the geodesic from $x_k$ to $\exp (\eta_k)$.
	Then by the \holder continuity of $\hess f$, we get
	\begin{equation}\label{eq:oa-diff-2}
		|f(\exp (\eta_k)) - m(\eta_k)| \le \frac{1}{2}C_{H}\tau\|\eta_k\|^{2+\mu} \le C_{H}\|\eta_k\|^{2+\mu}.
	\end{equation}
	Combining \cref{eq:oa-diff-1}, \cref{eq:oa-diff-2}, the Cauchy condition \cref{eq:cauchy}, and \cref{lem:ma-diff} gives
	\begin{equation}\label{eq:u}
		(1-\rh_2) \phi(\eta_k;\sigma_k)
		\leqslant C_H\left\|\eta_k\right\|^{2+\mu}+(1+C_{R}\|\eta_k\|^{\nu})\cdot \beta_{H}C_{R}\left\|\eta_k\right\|^{2+\nu} + C_{R}\|g_k\|\|\eta_k\|^{1+\nu}.
	\end{equation}
	A general $\ar{\omega}$ regularization necessitates the consideration of two cases: small and large step-sizes.\\
	\case{\rom{1}} When $\|\eta_k\|\le C_{\sigma,1}\coloneqq 1\w (\frac{1-\rh_2}{2(2+\omega)C_{R}})^{1/\nu}$, by \cref{lem:g-bound}, we have
	$$
		\frac{1-\rh_2}{2+\omega}\sigma_k\|\eta_k\|^{2+\omega} \le (C_{H}+(1+C_{R})\beta_{H}C_{R})\|\eta_k\|^{2+\mu\w\nu} + C_{R}(\beta_{H} + \sigma_k\|\eta_k\|^{\omega})\|\eta_k\|^{2+\nu},
	$$
	which gives
	$$
		\frac{1-\rh_2}{2(2+\omega)}\sigma_k\|\eta_k\|^{2+\omega} \le (C_{H}+(2+C_{R})\beta_{H}C_{R})\|\eta_k\|^{2+\mu\w\nu}.
	$$
	Let $C_{\sigma,2}\coloneqq 2(2+\omega)(C_{H}+(2+C_{R})\beta_{H}C_{R})/(1-\rh_2)$. The above inequality is equivalent to
	\begin{equation}\label{eq:u-2}
		\sigma_k\|\eta_k\|^{\omega} \le C_{\sigma,2}\|\eta_k\|^{\mu\w\nu}.
	\end{equation}
	\case{\rom{1}.\rom{1}} If $\|g_k\|>\epsilon_{g}$, by \cref{lem:g-bound,eq:u-2}, we get
	$$
		\epsilon_{g}\le (\beta_{H}+\sigma_k\|\eta_k\|^{\omega})\|\eta_k\|
		\le (\beta_{H}+C_{\sigma,2}\|\eta_k\|^{\mu\w\nu})\|\eta_k\| \le (\beta_{H}+C_{\sigma,2})\|\eta_k\|,
	$$
	which gives $\|\eta_k\|\ge \epsilon_{g}/(\beta_{H}+C_{\sigma,2})$.\\
	\case{\rom{1}.\rom{2}} If $\|g_k\|\le \epsilon_{g}$, termination condition \cref{eq:tc-2} is used. Then if the algorithm dose not terminate, by \cref{lem:tc} and \cref{eq:u-2}, we get
	$$
		\begin{aligned}
			\epsilon_{H}
			\le & \|\eta_k\|^{\theta} + \lambda_{\max}\left(\hess \phi(\eta_k;\sigma_k)\right)                                                                    \\
			=   & \|\eta_k\|^{\theta} + \lambda_{\max}\left( \sigma_k\|\eta_k\|^{\omega}\left( \frac{\eta_{k}\eta_{k}^T}{\omega\|\eta_k\|^2} + I \right)\ \right) \\
			\le & \|\eta_k\|^{\theta} + \frac{1+\omega}{\omega}\sigma_k\|\eta_k\|^{\omega}                                                                        \\
			\le & (1+ C_{\sigma,2}(1+\omega)/\omega)\|\eta_k\|^{\mu\w\nu\w\theta},
		\end{aligned}
	$$
	which gives $\|\eta_k\|\ge (\frac{\epsilon_{H}}{1+C_{\sigma,2}(1+\omega)/\omega})^{1 /\alpha}$.
	Let $C_{\sigma,3} \coloneqq (\beta_{H}+C_{\sigma,2}) \vee (1+C_{\sigma,2}(1+\omega)/\omega)^{1 /\alpha}$.
	For \case{\rom{1}}, we have $\|\eta_k\| \ge C_{\sigma,3}^{-1} (\epsilon_{g}\w \epsilon_{H}^{1/\alpha})$.
	Plugging it back into \cref{eq:u-2} gives
	$$
		\sigma_k \le C_{\sigma,2}\|\eta_k\|^{\mu\w\nu - \omega}
		\le C_{\sigma,2}\|\eta_k\|^{\alpha - \omega}
		\le C_{\sigma,2}\|\eta_k\|^{-[\omega-\alpha]_{+}} \le C_{\sigma,4}(\epsilon_{g}\w\epsilon_{H}^{1 /\alpha})^{-[\omega-\alpha]_{+}},
	$$
	where $C_{\sigma,4}\coloneqq C_{\sigma,2}C_{\sigma,3}^{[\omega-\alpha]_{+}}$.

	\case{\rom{2}} When $\|\eta_k\| > C_{\sigma,1}$, \cref{prop:s} introduces another two cases.\\
	\case{\rom{2}.\rom{1}} When the former term in \cref{prop:s} is active, we have
	$$
		\sigma_k \le \|\eta_k\|^{-\omega}\cdot 3(\beta_{H}+1) \le C_{\sigma,5}\coloneqq 3C_{\sigma,1}^{-\omega}(\beta_{H}+1).
	$$
	\case{\rom{2}.\rom{2}} When the latter term is active in \cref{prop:s}, \cref{eq:u} and \cref{prop:g} give
	$$
		\frac{1-\rh_2}{2+\omega}\sigma_k\|\eta_k\|^{2+\omega} \le
		C_H\beta_g^{2+\mu}+(1+C_{R}\beta_g^{\nu})\cdot \beta_{H}C_{R}\beta_g^{2+\nu} + C_{R}\beta_{g}\beta_g^{1+\nu},
	$$
	which further gives
	$$
		\sigma_k \le C_{\sigma,6} \coloneqq \frac{2+\omega}{(1-\rh_2) C_{\sigma,1}^{2+\omega}} \left( C_{H}\beta_{g}^{2+\mu}+C_{R}\beta_{g}^{2+\nu}+(1+C_{R}\beta_{g}^{\nu})C_{R}\beta_{H}\beta_{g}^{2+\nu} \right).
	$$
	Therefore, for \case{\rom{2}}, we have $\sigma_k\le C_{\sigma,5}\vee C_{\sigma,6}$.
	Combining \case{\rom{1}} and \case{\rom{2}} gives
	$$
		\sigma_k\le (C_{\sigma,4}\vee C_{\sigma,5}\vee C_{\sigma,6})\cdot (\epsilon_{g}\w \epsilon_{H}^{1 /\alpha})^{-[\omega-\alpha]_{+}}.
	$$
	Let $C_{\sigma}\coloneqq \kappa_3(C_{\sigma,4}\vee C_{\sigma,5}\vee C_{\sigma,6})$. We conclude $\sigma_k$ has an upper bound:
	$\bar{\sigma}\coloneqq C_{\sigma}(\epsilon_{g}\w \epsilon_{H}^{1 /\alpha})^{-[\omega-\alpha]_{+}}.$
\end{proof}

As we can see, when $\omega > \alpha$, the algorithm \textit{under-regularizes}, meaning it provides inadequate regularization compared to methods with smaller $\omega$. This can result in potentially larger regularization parameters, especially when the tolerance $\epsilon_{g}$ or $\epsilon_{H}$ is set to a small value. The underlying reason for this behavior is that in cases where the problem exhibits limited smoothness, the quadratic model function $m_{x}$ may fail to deliver a sufficiently accurate approximation of the objective function, particularly at points farther away from $x$. As a remedy, the regularization parameter is increased to constrain the step size.

\begin{lemma}[Minimal successful decrease of RAR]\label{lem:obj-dec-ar}
	For $k\in \mathcal{S}$, we have
	$$
		f(x_k) - f(x_{k+1}) \ge \begin{cases}
			C_s(\epsilon_{g} / \bar{\sigma})^{\tfrac{2+\omega}{1+\omega}},
			\quad & k\in \mathcal{S}_{GG}, \\[2ex]
			C_s(\epsilon_{H} / \bar{\sigma})^{\tfrac{2+\omega}{\omega}},
			\quad & k\in \mathcal{S}_{L},
		\end{cases}
	$$
	where $C_s$ is a positive constant satisfying $C_s\ge 1$ and $\bar{\sigma}$ is specified in \cref{lem:sigmabar}.
\end{lemma}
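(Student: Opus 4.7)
The proof hinges on a standard chain that converts a successful step into a guaranteed decrease. For any $k \in \mathcal{S}$, the acceptance rule $\rho_k \ge \varrho_1$ combined with the Cauchy inequality \cref{eq:cauchy} gives
$$
  f(x_k) - f(x_{k+1}) \ge \varrho_1\left(m_{x_k}(0) - m_{x_k}(\eta_k)\right) \ge \varrho_1\,\phi(\eta_k;\sigma_k) = \frac{\varrho_1 \sigma_k}{2+\omega}\|\eta_k\|^{2+\omega},
$$
so it suffices to lower bound $\sigma_k \|\eta_k\|^{2+\omega}$ in each case. If $\|\eta_k\| > 1$, then $\sigma_k\|\eta_k\|^{2+\omega} \ge \ubar{\sigma}$, a positive constant that dominates both target bounds (each $\le 1$ since $\epsilon_g, \epsilon_H \le 1$ and $\bar{\sigma} \ge 1$) after absorbing into $C_s$. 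Hereafter we assume $\|\eta_k\| \le 1$ so that the first bound in \cref{lem:tc} applies.

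For $k \in \mathcal{S}_{GG}$, \cref{eq:tc-1} was used and $\|g_{k+1}\| > \epsilon_g$. Since $\|\grad \phi(\eta_k;\sigma_k)\| = \sigma_k \|\eta_k\|^{1+\omega}$, the first bound in \cref{lem:tc} implies
$$
  \epsilon_g < \|g_{k+1}\| \le C_1 \|\eta_k\|^{1+\alpha} + \sigma_k \|\eta_k\|^{1+\omega},
$$
with $C_1 = 2C_H(C_R+1) + \beta_H C_R + 1$. At least one of the two terms on the right exceeds $\epsilon_g/2$. In the subcase $\sigma_k \|\eta_k\|^{1+\omega} \ge \epsilon_g/2$, we factor $\sigma_k \|\eta_k\|^{2+\omega} = \|\eta_k\| \cdot \sigma_k \|\eta_k\|^{1+\omega} \ge (\epsilon_g/(2\bar{\sigma}))^{1/(1+\omega)}\cdot \epsilon_g/2$, which matches the target up to constants; the factor $\bar{\sigma}^{-1/(1+\omega)}$ dominates $\bar{\sigma}^{-(2+\omega)/(1+\omega)}$ since $\bar{\sigma} \ge 1$. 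In the subcase $C_1 \|\eta_k\|^{1+\alpha} \ge \epsilon_g/2$, we obtain $\|\eta_k\| \gtrsim \epsilon_g^{1/(1+\alpha)}$ and thus $\sigma_k \|\eta_k\|^{2+\omega} \ge \ubar{\sigma}(\epsilon_g/(2C_1))^{(2+\omega)/(1+\alpha)}$; this dominates the target because $1/(1+\alpha) \le 1/(1+\omega)$ when $\omega \le \alpha$ (where $\bar{\sigma}$ is a universal constant by \cref{lem:sigmabar}), while for $\omega > \alpha$ the extra $\epsilon$-dependence of $\bar{\sigma}$ absorbs the remaining factor.

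For $k \in \mathcal{S}_L$, \cref{eq:tc-2} was used and $-\lambda_{\min}(H_k) > \epsilon_H$. Since $\lambda_{\max}(\hess \phi(\eta_k;\sigma_k)) \le \tfrac{1+\omega}{\omega}\sigma_k\|\eta_k\|^\omega$, the second bound in \cref{lem:tc} yields
$$
  \epsilon_H < \|\eta_k\|^\theta + \tfrac{1+\omega}{\omega}\sigma_k\|\eta_k\|^\omega.
$$
Again one term dominates. If $\sigma_k \|\eta_k\|^\omega \gtrsim \epsilon_H$, then $\sigma_k \|\eta_k\|^{2+\omega} = \|\eta_k\|^2 \cdot \sigma_k\|\eta_k\|^\omega \gtrsim (\epsilon_H/\bar{\sigma})^{2/\omega} \cdot \epsilon_H$, matching the claimed form (again $\bar\sigma^{-2/\omega} \ge \bar\sigma^{-(2+\omega)/\omega}$). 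If instead $\|\eta_k\|^\theta \gtrsim \epsilon_H$, then $\|\eta_k\| \gtrsim \epsilon_H^{1/\theta}$ and $\sigma_k\|\eta_k\|^{2+\omega} \gtrsim \ubar{\sigma}\,\epsilon_H^{(2+\omega)/\theta}$; an analogous exponent comparison (using $\theta \ge \alpha$) delivers the claimed bound, with the $\omega > \alpha$ case again handled by the $\epsilon$-dependence of $\bar{\sigma}$.

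The main obstacle is the careful case analysis and exponent bookkeeping across the $\omega \le \alpha$ and $\omega > \alpha$ regimes, compounded by the two-way minimum in $\bar\sigma = C_\sigma(\epsilon_g \w \epsilon_H^{1/\alpha})^{-[\omega-\alpha]_+}$. A uniform absorption of all residual constants into a single $C_s \ge 1$ then yields the stated inequalities.
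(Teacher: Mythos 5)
Your proof is correct and reaches the same conclusion via the same basic chain (Cauchy decrease $\Rightarrow$ lower bound on $\sigma_k\|\eta_k\|^{2+\omega}$ $\Rightarrow$ lower bound on $\|\eta_k\|$ from the termination criteria of Proposition~\ref{lem:tc}), but the case analysis is organized differently. The paper's proof shows that in \eqref{eq:ar-obj-dec-1} the regularization term $\bar\sigma\|\eta_k\|^{1+\omega}$ \emph{always} dominates (up to constants): trivially if $\omega\le\alpha$, and via the auxiliary chain ``$\|\eta_k\|\gtrsim\epsilon_g^{1/(1+\alpha)}\Rightarrow\|\eta_k\|^{\alpha-\omega}\lesssim\bar\sigma$'' if $\omega>\alpha$. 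This yields a \emph{single} lower bound $\|\eta_k\|\gtrsim(\epsilon_g/\bar\sigma)^{1/(1+\omega)}$ and the target falls out directly. You instead split on which of the two terms exceeds $\epsilon_g/2$ (resp.\ $\epsilon_H/2$) and verify each subcase against the target separately. The subcase where the regularization term is large gives the target immediately; the subcase where the model-error term is large produces a bound $\gtrsim\epsilon_g^{(2+\omega)/(1+\alpha)}$ (resp.\ $\gtrsim\epsilon_H^{(2+\omega)/\theta}$), and you then need the exponent comparison $1/(1+\alpha)\le(1+\omega-\alpha)/(1+\omega)$ (equivalently $\alpha(\omega-\alpha)\ge 0$, resp.\ $1/\theta\le 1/\alpha$) together with $\epsilon_g\w\epsilon_H^{1/\alpha}\le\epsilon_g$ (resp.\ $\le\epsilon_H^{1/\alpha}$) to absorb the $\epsilon$-dependence of $\bar\sigma$ when $\omega>\alpha$. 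These checks do go through, so both routes are valid; the paper's route produces a cleaner uniform bound on $\|\eta_k\|$, while yours is arguably more mechanical but requires the extra exponent bookkeeping at the end of each case, which your writeup only sketches (``an analogous exponent comparison delivers the claimed bound'')---worth spelling out since that is where the $[\omega-\alpha]_+$ power in $\bar\sigma$ does real work.
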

\begin{proof}
	By the Cauchy condition \cref{eq:cauchy}, for $k\in \mathcal{S}$, we have
	\begin{equation}\label{eq:s}
		f(x_k) - f(x_{k+1}) \ge \rh_1(m_{x_k}(0) - m_{x_k}(\eta_k))
		\ge \frac{\rh_1 \ubar{\sigma}}{2+\omega}\|\eta_k\|^{2+\omega}.
	\end{equation}
	When $\|\eta_k\| \ge 1$, \cref{eq:s} directly satisfies the requirement because
	$$
		\frac{\rh_1\ubar{\sigma}}{(2+\omega)}\|\eta_k\|^{2+\omega} \ge \frac{\rh_1\ubar{\sigma}}{(2+\omega)}
		\gtrsim 1 \gtrsim  (\epsilon_{g} / \bar{\sigma})^{1 / (1+\omega)} \vee (\epsilon_{H} /\bar{\sigma})^{1 /\omega}.
	$$ Thus, we only consider the case where $\|\eta_k\|<1$.
	For $k\in \mathcal{S}_{GG}$, by \cref{lem:tc}, we get
	\begin{equation}\label{eq:ar-obj-dec-1}
		\epsilon_{g}\le C_{s,1}\|\eta_k\|^{1+\alpha} + \bar{\sigma}\|\eta_k\|^{1+\omega},
	\end{equation}
	where $C_{s,1} \coloneqq 2C_{H}(C_{R}+1)+\beta_{H}C_{R}+1$ and $\bar{\sigma}$ is specified in \cref{lem:sigmabar}.
	We claim that the right hand side of \cref{eq:ar-obj-dec-1} is always dominated by $\bar{\sigma}\|\eta_k\|^{1+\omega}$, i.e., $\|\eta_k\|^{1+\alpha}\lesssim \bar{\sigma}\|\eta_k\|^{1+\omega}$. Since $\|\eta_k\|<1$, this is obviously true when $\alpha\ge \omega$.
	Now suppose $\alpha<\omega$ and the right hand side of \cref{eq:ar-obj-dec-1} is dominated by $C_{s,1}\|\eta_k\|^{1+\alpha}$.
	Then, \cref{eq:ar-obj-dec-1} gives
	$$
		\|\eta_k\| \gtrsim \epsilon_{g}^{1 /(1+\alpha)} \ge \epsilon_{g} \ge \epsilon_{g}\w \epsilon_{H}^{1 /\alpha}.
	$$
	Since $\alpha < \omega$, by \cref{lem:sigmabar}, we get
	$$
		\|\eta_k\|^{\alpha - \omega} \lesssim \left( \epsilon_{g}\w \epsilon_{H}^{1 /\alpha} \right)^{\alpha-\omega} \lesssim \bar{\sigma},
	$$
	which indicates $\|\eta_k\|^{\alpha} \lesssim \bar{\sigma}\|\eta_k\|^{\omega}$, affirming the dominance of $\bar{\sigma}\|\eta_k\|^{\omega}$.
	Therefore, we conclude that there exists $C_{s,2} > 0$ such that \cref{eq:ar-obj-dec-1} is equivalent to
	$$
		\epsilon_{g} \le C_{s,2}\bar{\sigma}\|\eta_k\|^{1+\omega},
	$$
	which further gives $\|\eta_k\| \ge (\epsilon_{g}/(\bar{\sigma}C_{s,2}))^{1 /(1+\omega)}$.

	For $k\in \mathcal{S}_{L}$, by \cref{lem:tc}, we have
	$$
		\epsilon_{H} \le \|\eta_k\|^{\theta} + \bar{\sigma}\|\eta_k\|^{\omega}.
	$$
	Similarly, we claim that the right hand side of the above inequality is dominated by $\bar{\sigma}\|\eta_k\|^{\omega}$. Therefore, there exists $C_{s,3} > 0$ such that
	$$
		\epsilon_{H} \le C_{s,3}\bar{\sigma}\|\eta_k\|^{\omega},
	$$
	which gives $\|\eta_k\| \ge (\epsilon_{H} /(\bar{\sigma}C_{s,3}))^{1 /\omega}$. Let $C_s = (C_{s,2}^{-(2+\omega) /(1+\omega)} \w C_{s,3}^{-(2+\omega) /\omega}) \cdot \rh_1\ubar{\sigma} /(2+\omega)$. Then we get the result combining \cref{eq:s}.
\end{proof}

Plugging \cref{lem:sigmabar,lem:obj-dec-ar} into \cref{cor:K} gives the iteration complexity of RAR with $\ar{\omega}$ regularization.

\begin{theorem}[Iteration complexity of RAR] \label{thm:ar}
	Under \cref{asmp},
	\cref{alg:ar} finds an $(\epsilon_{g},\epsilon_{H})$-approximate second-order stationary point with the following worst-case iteration complexity:
	$$
		O\left(\max\left\{
		\epsilon_{g}^{ - (1+[\omega-\alpha]_{+})\cdot \tfrac{2+\omega}{1+\omega}} ,\
		\epsilon_{g}^{- \tfrac{(2+\omega)[\omega-\alpha]_+}{\omega}} \epsilon_{H}^{- \tfrac{2+\omega}{\omega}} ,\
		\epsilon_{H}^{ - \left(1+\tfrac{[\omega-\alpha]_{+}}{\alpha}\right) \tfrac{2+\omega}{\omega}}
		\right\}\right),
	$$
	where $\alpha = \mu\w\nu\w\theta$.
\end{theorem}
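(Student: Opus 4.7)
The plan is to deduce the theorem as an immediate consequence of \cref{cor:K}, by inserting the explicit decreases $\epsilon_1,\epsilon_2$ from \cref{lem:obj-dec-ar} and the regularization bound $\bar\sigma$ from \cref{lem:sigmabar}, and then performing a straightforward case split on whether $\epsilon_g$ or $\epsilon_H^{1/\alpha}$ achieves the minimum in the formula for $\bar\sigma$. The only nontrivial bookkeeping is tracking the exponent $[\omega-\alpha]_+$, which is why the final bound is a maximum of three terms rather than one clean expression.

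First I would check that the hypotheses of \cref{lem:K} apply to \cref{alg:ar}: the update rule specifies $\sigma_{k+1}\in[\max\{\ubar\sigma,\kappa_1\sigma_k\},\sigma_k]$ on very successful iterations and $\sigma_{k+1}\in[\kappa_2\sigma_k,\kappa_3\sigma_k]$ on unsuccessful ones, with $\kappa_1<1<\kappa_2\le\kappa_3$ and $\sigma_0\ge\ubar\sigma$, so \cref{lem:K} yields $|\mathcal{K}|=O(|\mathcal{S}|+\log(\bar\sigma/\ubar\sigma))$. Combining with \cref{lem:S} and the decreases $\epsilon_1, \epsilon_2$ from \cref{lem:obj-dec-ar} gives
\[
  |\mathcal{K}|
  \;\lesssim\;
  \epsilon_1^{-1}\vee\epsilon_2^{-1}
  \;+\;\log(\bar\sigma/\ubar\sigma),
  \qquad
  \epsilon_1^{-1}\lesssim\bar\sigma^{\tfrac{2+\omega}{1+\omega}}\epsilon_g^{-\tfrac{2+\omega}{1+\omega}},\quad
  \epsilon_2^{-1}\lesssim\bar\sigma^{\tfrac{2+\omega}{\omega}}\epsilon_H^{-\tfrac{2+\omega}{\omega}}.
\]

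Next I would substitute the bound $\bar\sigma\le C_\sigma(\epsilon_g\wedge\epsilon_H^{1/\alpha})^{-[\omega-\alpha]_+}$ from \cref{lem:sigmabar}. For $\epsilon_1^{-1}$ the tight choice is $\epsilon_g\wedge\epsilon_H^{1/\alpha}\le\epsilon_g$ (since $\epsilon_1^{-1}$ already carries a factor of $\epsilon_g^{-1}$), producing the first term $\epsilon_g^{-(1+[\omega-\alpha]_+)(2+\omega)/(1+\omega)}$. For $\epsilon_2^{-1}$ I would bound the same min by each of its arguments to obtain two candidate bounds: choosing $\epsilon_g\wedge\epsilon_H^{1/\alpha}\le\epsilon_g$ yields the mixed term $\epsilon_g^{-[\omega-\alpha]_+(2+\omega)/\omega}\epsilon_H^{-(2+\omega)/\omega}$, while choosing $\epsilon_g\wedge\epsilon_H^{1/\alpha}\le\epsilon_H^{1/\alpha}$ yields the pure-$\epsilon_H$ term $\epsilon_H^{-(1+[\omega-\alpha]_+/\alpha)(2+\omega)/\omega}$. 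Since $\epsilon_2^{-1}$ is bounded by the minimum of the two, it is in particular bounded by their maximum, and the three bounds together give the stated maximum.

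Finally I would absorb the $\log(\bar\sigma/\ubar\sigma)$ term: because $\bar\sigma$ is polynomial in $\epsilon_g^{-1}$ and $\epsilon_H^{-1}$, the logarithm is $O(\log(1/\epsilon_g)+\log(1/\epsilon_H))$, which under \cref{asmp} (where $\epsilon_g,\epsilon_H\le1$) is dominated by any one of the three polynomial terms since all their exponents are strictly positive. There is no serious obstacle here; the only delicate step is verifying that taking the worst of the two choices for bounding $\epsilon_g\wedge\epsilon_H^{1/\alpha}$ in $\epsilon_2^{-1}$ is consistent with the stated max (it is, simply by $\min\{a,b\}\le\max\{a,b\}$), so the proof reduces to plugging in and collecting exponents.
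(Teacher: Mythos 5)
Your high-level decomposition matches the paper's: invoke \cref{cor:K} with the minimal decreases from \cref{lem:obj-dec-ar} and the bound $\bar\sigma$ from \cref{lem:sigmabar}, then absorb the logarithmic term. But there is a genuine gap in the substitution step. You bound $\epsilon_g\wedge\epsilon_H^{1/\alpha}$ from above by one of its arguments, yet this min enters $\bar\sigma = C_\sigma(\epsilon_g\wedge\epsilon_H^{1/\alpha})^{-[\omega-\alpha]_+}$ through a \emph{nonpositive} exponent, so an upper bound on the min only yields a \emph{lower} bound on $\bar\sigma$, which is useless here; the phrase ``$\epsilon_2^{-1}$ is bounded by the minimum of the two'' has the same directional error. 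What is actually true is that $\bar\sigma$ equals one of $C_\sigma\epsilon_g^{-[\omega-\alpha]_+}$ or $C_\sigma\epsilon_H^{-[\omega-\alpha]_+/\alpha}$, hence is bounded by their maximum. Carrying that max through both $\epsilon_1^{-1}$ and $\epsilon_2^{-1}$ gives \emph{four} candidate terms, not three; the extra one is the cross term
\[
\epsilon_g^{-\tfrac{2+\omega}{1+\omega}}\,\epsilon_H^{-\tfrac{[\omega-\alpha]_+(2+\omega)}{\alpha(1+\omega)}}
\]
coming from $\epsilon_1^{-1}$ when $\bar\sigma\sim\epsilon_H^{-[\omega-\alpha]_+/\alpha}$. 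It does not appear in the theorem's statement, and declaring the $\epsilon_g$-only branch ``the tight choice'' does not dispose of it.

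The paper avoids this by splitting explicitly on whether $\epsilon_H\le\epsilon_g^\alpha$ or $\epsilon_H>\epsilon_g^\alpha$, substituting the exact expression for $\bar\sigma$ in each regime, and in the first regime computing the ratio of the $\mathcal{S}_{GG}$ and $\mathcal{S}_L$ decreases to show the $\mathcal{S}_L$ decrease dominates, so that only the pure-$\epsilon_H$ term survives from that case. To close your version you would need either to reproduce that case split, or to check directly that the cross term is majorized by $\epsilon_H^{-(1+[\omega-\alpha]_+/\alpha)(2+\omega)/\omega}$ when $\epsilon_g\ge\epsilon_H^{1/\alpha}$ and by $\epsilon_g^{-(1+[\omega-\alpha]_+)(2+\omega)/(1+\omega)}$ when $\epsilon_g<\epsilon_H^{1/\alpha}$; this is true, but it rests on the inequality $\omega(1+[\omega-\alpha]_+)\le(1+\omega)(\alpha+[\omega-\alpha]_+)$, which is exactly the comparison the paper carries out and which ``$\min\{a,b\}\le\max\{a,b\}$'' does not supply.
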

\begin{proof}
	By \cref{lem:sigmabar}, the second logarithmic term in \cref{cor:K} is suppressed by the first term.
	When $\epsilon_{H}\le \epsilon_{g}^{\alpha}$, \cref{lem:sigmabar} states that $\bar{\sigma} = C_{\sigma}\epsilon_{H}^{-[\omega-\alpha]_{+}/\alpha}$.
	We calculate the ratio of two minimal decreases for cases of $k\in \mathcal S_{GG}$ and $k\in \mathcal S_{L}$  in \cref{lem:obj-dec-ar}:
	$$
		\begin{aligned}
			\frac{(\epsilon_{g} / \bar{\sigma})^{(2+\omega) / (1+\omega)}}{(\epsilon_{H} / \bar{\sigma})^{(2+\omega) / \omega}}
			=   & C_{\sigma}^{(2+\omega)/(\omega(1+\omega))}  \frac{(\epsilon_{g}\epsilon_{H}^{- [\omega-\alpha]_{+} /\alpha} )^{(2+\omega) / (1+\omega)}}{(\epsilon_{H}^{1 - [\omega-\alpha]_{+}/\alpha})^{(2+\omega) / \omega}} \\
			\ge & \frac{(\epsilon_{H}^{1/\alpha- [\omega-\alpha]_{+} /\alpha} )^{(2+\omega) / (1+\omega)}}{(\epsilon_{H}^{1 - [\omega-\alpha]_{+}/\alpha})^{(2+\omega) / \omega}}                                                 \\
			=   & \left( \epsilon_{H}^{\omega-\alpha-[\omega-\alpha]_{+}-\omega\alpha} \right) ^{(2+\omega)/(\alpha\omega(1+\omega))},
		\end{aligned}
	$$
	where the inequality uses $C_{\sigma}\ge 1$ and $\epsilon_{H}\le \epsilon_{g}^{\alpha}$. Since $\omega-\alpha-[\omega-\alpha]_{+}-\omega\alpha< 0$ (which is easy to verify), the above ratio is greater than 1, making the minimal decrease for $k\in \mathcal{S}_{L}$ a lower bound for \cref{lem:obj-dec-ar}:
	$$
		f(x_k) - f(x_{k+1})
		\ge C_s  (\epsilon_{H} /\bar{\sigma})^{(2+\omega)/\omega}
		= C_s C_{\sigma}^{-\frac{2+\omega}{\omega}}
		\epsilon_{H}^{\left( 1 + \frac{[\omega-\alpha]_{+}}{\alpha} \right) \frac{2+\omega}{\omega} }.
	$$

	When $\epsilon_{H}> \epsilon_{g}^{\alpha}$, \cref{lem:sigmabar} states that $\bar{\sigma} = C_{\sigma}\epsilon_{g}^{-[\omega-\alpha]_{+}}$, and \cref{lem:obj-dec-ar} gives
	$$
		f(x_k) - f(x_{k+1}) \ge C_{s}
		\min \left\{ C_{\sigma}^{-\frac{2+\omega}{1+\omega}}\epsilon_{g}^{(1+[\omega-\alpha]_{+})\frac{2+\omega}{1+\omega}}, C_{\sigma}^{-\frac{2+\omega}{\omega}} \epsilon_{g}^{\frac{(2+\omega)[\omega-\alpha]_{+}}{\omega}}\epsilon_{H}^{ \frac{2+\omega}{\omega} }\right\}.
	$$
	Combining two cases gives the result.
\end{proof}

\begin{corollary}[Optimal iteration complexity of RAR]\label{cor:ar}
	When $\omega = \alpha$, \cref{thm:ar} achieves the optimal iteration complexity:
	$$
		O\left(\max \left\{ \epsilon_{g}^{- \tfrac{2+\alpha}{1+\alpha}}, \epsilon_{H}^{-\tfrac{2+\alpha}{\alpha}} \right\} \right).
	$$
\end{corollary}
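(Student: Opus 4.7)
The plan is to obtain this corollary as a direct specialization of \cref{thm:ar} by substituting $\omega = \alpha$, which causes $[\omega - \alpha]_+$ to vanish and collapses the three-term maximum into the two-term maximum claimed here. No additional technical ingredients are needed beyond what \cref{thm:ar} already provides.

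Concretely, I would evaluate the three exponents in the max at $\omega = \alpha$ and $[\omega - \alpha]_+ = 0$. The first term simplifies to $\epsilon_g^{-(2+\alpha)/(1+\alpha)}$. The second term reduces to $\epsilon_H^{-(2+\alpha)/\alpha}$, since the exponent of $\epsilon_g$ becomes zero and the factor $\epsilon_g^0 = 1$ drops out. The third term likewise becomes $\epsilon_H^{-(2+\alpha)/\alpha}$, duplicating the second. Taking the maximum of these three quantities then yields precisely $O(\max\{\epsilon_g^{-(2+\alpha)/(1+\alpha)},\, \epsilon_H^{-(2+\alpha)/\alpha}\})$, which is the stated bound.

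The ``optimality'' claim is justified by comparison with the matching lower bounds recalled in the related-work discussion: the $O(\epsilon^{-(2+\mu)/(1+\mu)})$ lower bound of Cartis et al.~for $C^{2,\mu}$ problems and the $O(\epsilon^{-3/2})$ bound at $\mu = 1$, which the above complexity attains whenever $\nu, \theta \ge \mu$ so that $\alpha = \mu$. Since the derivation is a direct substitution of exponents, there is no genuine obstacle; the conceptual content of the corollary is that $\omega = \alpha$ is the unique choice neutralizing the under-regularization penalty captured by $[\omega - \alpha]_+ > 0$ in \cref{thm:ar}, thereby attaining the sharp iteration complexity advertised in the introduction.
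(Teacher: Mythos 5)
Your substitution of $\omega=\alpha$ into the three-term maximum of \cref{thm:ar} is carried out correctly, and it does yield the displayed two-term bound. However, the paper's proof of \cref{cor:ar} is not about that substitution at all; it is about establishing that $\omega=\alpha$ is the value of the regularization order that \emph{minimizes} the complexity bound of \cref{thm:ar} over all admissible $\omega\in(0,1]$. The paper does this by a two-sided monotonicity argument: for $\omega\le\alpha$ the bound reduces to $O\bigl(\max\{\epsilon_{g}^{-(2+\omega)/(1+\omega)},\,\epsilon_{H}^{-(2+\omega)/\omega}\}\bigr)$, which is monotonically decreasing in $\omega$; for $\omega\ge\alpha$ the bound is lower-bounded by a quantity monotonically increasing in $\omega$. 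These two monotonicities together pin $\omega=\alpha$ as the unique minimizer.

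Your framing of the ``conceptual content'' --- that $\omega=\alpha$ is the unique choice ``neutralizing the under-regularization penalty captured by $[\omega-\alpha]_+>0$'' --- misses half of that argument. The factor $[\omega-\alpha]_+$ vanishes for \emph{every} $\omega\le\alpha$, so killing it does not single out $\omega=\alpha$: it must also be shown that taking $\omega<\alpha$ (over-regularization) is strictly worse, which is exactly what the paper's first monotonicity case does (both exponents $(2+\omega)/(1+\omega)$ and $(2+\omega)/\omega$ decrease in $\omega$). Your appeal to the external lower bounds of Cartis et al.\ and Carmon et al.\ is a valid secondary justification of ``optimality'' (optimality against all algorithms, under the additional proviso $\nu,\theta\ge\mu$), and the paper indeed invokes those bounds in the introduction and related-work sections --- but it is not what the proof of this corollary establishes, and by itself it leaves the over-regularization direction ($\omega<\alpha$) unaddressed. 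So: formula derivation correct, optimality argument incomplete relative to what the corollary is asserting.
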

\begin{proof}
	When $\alpha \ge \omega$, \cref{alg:ar} becomes
	$$
		O\left(\max \left\{ \epsilon_{g}^{- \tfrac{2+\omega}{1+\omega}}, \epsilon_{H}^{-\tfrac{2+\omega}{\omega}} \right\} \right),
	$$
	which monotonically decreases as $\omega$ increases.
	When $\alpha \le \omega$, \cref{alg:ar} becomes
	$$
		O\left(\!\max\left\{
		\epsilon_{g}^{ - \tfrac{(2+\omega)(1+\omega-\alpha)}{1+\omega}}\!,
		\epsilon_{g}^{- \tfrac{(2+\omega)(\omega-\alpha)}{\omega}} \epsilon_{H}^{- \tfrac{2+\omega}{\omega}}\!\!,
		\epsilon_{H}^{ - \frac{2+\omega}{\alpha}}
		\right\}\right)
		\ge
		O\left(\!\max\left\{
		\epsilon_{g}^{ - \tfrac{(2+\omega)(1+\omega-\alpha)}{1+\omega}}\!,
		\epsilon_{H}^{ - \frac{2+\omega}{\alpha}}
		\right\}\!\right),
	$$
	where the right hand side monotonically increases with $\omega$.
	Therefore, the complexity in \cref{thm:ar} is optimal when $\alpha = \omega$.
\end{proof}

\cref{cor:ar} agrees with \cref{lem:sigmabar} that $\omega > \alpha$ under-regularizes. It also suggests that $\omega < \alpha$ \textit{over-regularizes}, yielding a suboptimal complexity bound.

\begin{remark}
	\cref{cor:ar} requires knowledge of $\mu$ to achieve the optimal iteration complexity.
  Grapiglia and Nesterov~\cite{grapiglia2017RegularizedNewton} pioneered the development of \textit{universal} adaptive regularization methods that achieve the optimal iteration complexity without requiring knowledge of $\mu$.
	They consider a similar model problem to \cref{eq:model-ar} and show that for a $C^{2,\mu}$ objective, for a certain range of $\sigma$, solving the model with $\omega=1$ yields objective decrease comparable to that of solving the $\mu$-aware model with $\omega=\mu$ and $\sigma=C_{H}$.
	A line search procedure is then used to identify such a $\sigma$, with the total number of searches bounded by a logarithmic term.
	We believe that a universal RAR method can be developed by incorporating the techniques in \cite{grapiglia2017RegularizedNewton} into our framework.
\end{remark}%

\ifSubfilesClassLoaded{\bibliography{ic}}{}
\section{Riemannian Trust Region Methods} \label{sec:rtr}
Riemannian trust region (RTR) methods \citep{zhang2023RiemannianTrust,absilTrustRegionMethodsRiemannian2007} add a trust region constraint to the vanilla quadratic model function \cref{eq:model}.
Inspired by \cite{curtis2021TrustRegionNewtonCG}, we additionally introduce a small \textit{non-adaptive} quadratic regularization term to facilitate the iteration complexity analysis, transforming the Riemannian trust region model problem at $x_k\in\M$ into:
\begin{align}\label{eq:model-tr}
	\min_{\eta \in T_{x_k}\mathcal{M},~\|\eta\|\le\Delta_k} \quad \bar{m}^{\mathrm{tr}}_{x_k}(\eta;\Delta_k) =  f(x_k) + \left< \eta, \grad f(x_k) \right> + \frac{1}{2} \langle \eta,\hess f(x_k)[\eta]\rangle + \frac{1}{4}\epsilon_{H}\|\eta\|^2.
\end{align}
It is worth noting that we incorporate the trust region as a problem constraint, and thus the regularization term in the model function \cref{eq:model-arn} is $\phi^{\mathrm{tr}}(\eta) \coloneqq \epsilon_{H}\|\eta\|^2 /4$.
We inherit the method framework in \cref{alg:arn} {with some modifications to the termination criteria.}
When the solution to \cref{eq:model-tr} is situated at the trust region boundary, \cref{eq:tc-1} may not be satisfied. Therefore, we introduce a truncation termination criterion:
a {subproblem} solution is returned as soon as it reaches the trust region boundary, which gives
\begin{equation}\label{eq:tc-t}\tag{TC.T}
	\|\eta_k\| = \Delta_k.
\end{equation}
Otherwise, we utilize the residual termination criteria \cref{eq:tc-1} or \cref{eq:tc-2}.
In all cases, \cref{eq:tc-c} must also be satisfied.
We define two additional index sets:
$$
	\mathcal{B} \coloneqq \{ k\in \mathcal{K}: \|\eta_k\| = \Delta_k\}, \ \ \text{and}\ \
	\mathcal{I} \coloneqq \{ k\in \mathcal{K}: \|\eta_k\| < \Delta_k\}.
$$
{For RTR, we only require \cref{lem:g-bound} to hold when $\eta_k$ resides within the interior of the trust region, i.e., $\|\eta_k\|$ is strictly smaller than the trust region radius.
		Consistently, a Lanczos-based Krylov subspace method \citep{luksan2008Lagrangemultipliers,gould1999SolvingTrustRegion} satisfies the requirement (verified in \cref{sec:pf-g-bound}).}
A more practical truncated conjugate gradient (TCG) method \citep{steihaugConjugateGradientMethod1983,toint1981efficientsparsity} can also be used to solve the subproblem. We defer the detailed description of these subproblem solvers to the next section.
The only difference from a standard trust region subproblem solver is that here we pass a regularized Hessian $\bar{H}_k \coloneqq H_k + \frac{1}{2}\epsilon_{H}I$ to it.
The method is presented in \cref{alg:tr}.
When we set $\epsilon_H = 0$ and omit the second-order certification, our algorithm reduces to the classical one in \citet{absilTrustRegionMethodsRiemannian2007}.

\begin{algorithm}[!th]
	\caption{Riemannian Trust Region Method}
	\label{alg:tr}
	{
		\Input{Initial point $x_0\in\mathcal{M}$, tolorance $\epsilon_{g}, \epsilon_{H} \in (0,1)$, trust region adaptation parameters $\kappa_2 \ge 1 > \kappa_{1} > 0, \bar{\Delta} > 0, \Delta_0 \in (0, \bar{\Delta}]$, step acceptance parameter $\varrho\in [0,1/4)$}
		\Output{Last iterate $x_k$}
	}
	\For{$k=0,1,\dots$}{
		\uIf {$\|g_k\| > \epsilon_{g}$} { \label{line:tr_sub_beg}
			obtain $\eta_k$ by solving problem \eqref{eq:model-tr} with termination criteria~(TC.(\hyperref[eq:tc-1]{1}\tcor\hyperref[eq:tc-t]{T})\tcand\hyperref[eq:tc-c]{C})
		} \uElseIf {$H_k\not\succeq -\epsilon_{H}I$} {
			obtain $\eta_k$ by solving problem \eqref{eq:model-tr} with termination criteria~(TC.(\hyperref[eq:tc-2]{2}\tcor\hyperref[eq:tc-t]{T})\tcand\hyperref[eq:tc-c]{C})
		} \Else {
			\Return $x_k$\;
		} \label{line:tr_sub_end}
		compute $\rho_k$ using \eqref{eq:rho}\;
		set $\Delta_{k+1} = \left\{\begin{aligned}&\min \{ \bar{\Delta}, \kappa_2\Delta_k \}, &&\rho_k > 3 /4 \text{ and } \|\eta_k\|= \Delta_k, && \texttt{// very successful}\\ &\kappa_1 \Delta_k, && \rho_k < 1 /4, && \texttt{// unsuccessful}\\& \Delta_k, && \text{o.w.}&& \texttt{// successful}\end{aligned}\right.$\;
		set $x_{k+1} = \begin{cases}R_{x_k}(\eta_k), &\rho_k \ge \varrho,\\  x_k & \text{o.w.}\end{cases}$
	}
\end{algorithm}

\ifSubfilesClassLoaded{\bibliography{ic}}{}
\subsection{Iteration Complexity of RTR} 

As discussed in \cref{sec:ic-frame}, obtaining an iteration complexity bound of RTR only requires establishing a lower bound of $\Delta_k$ and the minimal decrease in $f$ for $k\in \mathcal{S}$.
Similar to the case of RAR (see \cref{sec:rar}), we need to carefully discuss the gradient dependency introduced by a general retraction in order to establish a lower bound of $\Delta_k$. Moreover, given that our problem does not possess $C^{2,1}$ smoothness, we employ certain strategies to derive the minimal successful decrease, which are essential for achieving the optimal complexity bound as shown in \citet{cartis2011OptimalNewtontype}.

\begin{lemma}[Trust region radius lower bound] \label{lem:deltamin}
	The trust region radius has a lower bound
	$$
		\ubar{\Delta} \coloneqq C_{\Delta}\epsilon_{H}^{\frac{1}{\scriptstyle\mu\w\nu}},
	$$
	where $C_\Delta$ is a positive constant.
\end{lemma}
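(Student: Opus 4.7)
The plan is to follow the classical trust--region strategy by its contrapositive: show that whenever the $k$th iteration is unsuccessful (so that the rule in \cref{alg:tr} actually shrinks the trust region), $\Delta_k$ must already be bounded below by a constant multiple of $\epsilon_H^{1/(\mu\wedge\nu)}$. Combining this with the fact that $\Delta_{k+1}=\kappa_1\Delta_k$ on unsuccessful steps then yields a universal lower bound $\ubar{\Delta}=C_\Delta\epsilon_H^{1/(\mu\wedge\nu)}$ with $C_\Delta$ absorbing $\kappa_1$ and $\Delta_0$. Concretely, suppose $\rho_k<1/4$; since $m_{x_k}(0)=f(x_k)$, this rearranges to
$$\tfrac{3}{4}\bigl(m_{x_k}(0)-m_{x_k}(\eta_k)\bigr)\le \bigl|f(R_{x_k}(\eta_k))-m_{x_k}(\eta_k)\bigr|,$$
and the task reduces to showing that this inequality forces $\|\eta_k\|$ (hence $\Delta_k\ge\|\eta_k\|$) to be at least $\gtrsim\epsilon_H^{1/(\mu\wedge\nu)}$.

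For the denominator I would invoke the Cauchy inequality \cref{eq:cauchy} with $\phi^{\mathrm{tr}}(\eta)=\epsilon_H\|\eta\|^2/4$, which immediately gives $m_{x_k}(0)-m_{x_k}(\eta_k)\ge \epsilon_H\|\eta_k\|^2/4$. For the numerator I would split through the exponential map as $|f(R_{x_k}(\eta_k))-m_{x_k}(\eta_k)|\le |f(R_{x_k}(\eta_k))-f(\exp_{x_k}(\eta_k))|+|f(\exp_{x_k}(\eta_k))-m_{x_k}(\eta_k)|$, bounding the first summand via the third inequality in \cref{lem:ma-diff} and the second via the Taylor-on-manifolds identity combined with the $\mu$-\holder continuity of $\hess f$, exactly as in \cref{eq:oa-diff-2}. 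This produces an estimate of the form
$$\bigl|f(R_{x_k}(\eta_k))-m_{x_k}(\eta_k)\bigr|\lesssim \|\eta_k\|^{2+\mu}+\|\eta_k\|^{2+\nu}+\|g_k\|\,\|\eta_k\|^{1+\nu}.$$

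The main obstacle is to keep the last, gradient-dependent, summand under control against the $\epsilon_H\|\eta_k\|^2$ lower bound on the denominator. When $\|\eta_k\|<\Delta_k$ (the interior case), \cref{lem:g-bound} with $\|\g\phi^{\mathrm{tr}}(\eta_k)\|=\epsilon_H\|\eta_k\|/2$ gives $\|g_k\|\le(\beta_H+\epsilon_H/2)\|\eta_k\|$, so $\|g_k\|\,\|\eta_k\|^{1+\nu}\lesssim \|\eta_k\|^{2+\nu}$ and all three summands collapse to $O(\|\eta_k\|^{2+\mu\wedge\nu})$. When $\|\eta_k\|=\Delta_k$ (the boundary case), where \cref{lem:g-bound} is not enforced, I would instead strengthen the denominator bound by invoking the standard trust--region Cauchy decrease $\bar m(0)-\bar m(\eta^{\mathrm C})\ge \tfrac{1}{2}\|g_k\|\min\!\bigl(\Delta_k,\|g_k\|/(\beta_H+\epsilon_H/2)\bigr)$; this introduces a $\|g_k\|\Delta_k/2$ term in the denominator that dominates the offending $C_R\|g_k\|\Delta_k^{1+\nu}$ as soon as $\Delta_k\lesssim 1$, so again only the $\|\eta_k\|^{2+\mu\wedge\nu}$ terms need be balanced against $\epsilon_H\|\eta_k\|^2$.

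With the gradient contribution tamed in both subcases, the inequality reduces to $\epsilon_H\|\eta_k\|^2\lesssim \|\eta_k\|^{2+\mu\wedge\nu}$, that is $\|\eta_k\|^{\mu\wedge\nu}\gtrsim \epsilon_H$, and hence $\Delta_k\ge\|\eta_k\|\gtrsim\epsilon_H^{1/(\mu\wedge\nu)}$. Propagating this single-step bound through the update rule (noting $\Delta_0$ is fixed and $\Delta_k$ cannot shrink below $\kappa_1$ times the smallest pre-shrinkage value) yields $\Delta_k\ge C_\Delta\epsilon_H^{1/(\mu\wedge\nu)}$ for a suitable absolute constant $C_\Delta$, completing the proof. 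The hardest part of executing this plan is the bookkeeping in the boundary case, where one must verify that the additional Cauchy term genuinely dominates the gradient-dependent retraction error for all admissible $\Delta_k\le\bar\Delta$.
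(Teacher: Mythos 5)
Your plan traces exactly the paper's argument: rearrange $\rho_k<1/4$, split $|f(R_{x_k}(\eta_k))-m_{x_k}(\eta_k)|$ through the exponential map using Proposition~\ref{lem:ma-diff} and H\"older continuity of the Hessian, control the gradient term via Assumption~\ref{lem:g-bound} in the interior case, fall back on the classical Cauchy decrease on the boundary, and propagate the per-shrinkage lower bound through the factor $\kappa_1$. The one soft spot is the boundary case. The Cauchy decrease yields $\tfrac12\|g_k\|\min\{\Delta_k,\|g_k\|/(\beta_H+\epsilon_H/2)\}$, and the term you invoke, $\|g_k\|\Delta_k/2$, only materializes when $\Delta_k$ is the smaller argument of the $\min$; when it is the larger, the Cauchy contribution is $\|g_k\|^2/(2(\beta_H+\epsilon_H/2))$ and your dominance argument does not apply as written. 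In that branch you must instead use the complementary inequality $\|g_k\|\le(\beta_H+\epsilon_H/2)\Delta_k$ to absorb the offending $C_R\|g_k\|\Delta_k^{1+\nu}$ into the $\Delta_k^{2+\nu}$ family, mirroring the interior case. This is the split the paper carries out (its Cases~II.I vs.\ II.II, with II.II further subdivided on whether $\Delta_k\le(3/(8C_R))^{1/\nu}$). Relatedly, the threshold below which $\tfrac38\|g_k\|\Delta_k$ dominates $C_R\|g_k\|\Delta_k^{1+\nu}$ is $\Delta_k\le(3/(8C_R))^{1/\nu}$, not an absolute $\lesssim 1$; the conclusion is unaffected because larger radii are already bounded below by a constant, but the $1/\nu$ exponent should appear. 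Finally, the paper quietly works under the harmless restriction $\|\eta_k\|\le 1$ so that the $2+\mu$ and $2+\nu$ exponents collapse to $2+\min\{\mu,\nu\}$; state that reduction explicitly. With that bookkeeping supplied, your proof coincides with the paper's.
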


\begin{proof}
	In this proof, we omit the superscript of $\bar{m}^{\mathrm{tr}}$ and the subscript $x_k$ of $\bar{m}_{x_k}$, $m_{x_k}$, and $R_{x_k}$.
	According to the update rule in \cref{alg:tr}, the trust region radius only shrinks when $\rho_k<1/4$. Therefore, we only need to establish the lower bound of $\Delta_k$ when $\rho_k < 1/4$.
	As we are considering the lower bound of the trust region radius, we only need to consider the case of  $\|\eta_k\|\le 1$.
	Similar to \cref{eq:u},
	when $\rho_k < 1 /4$, we have
	\begin{align}\label{eq:deltamin-1}
		\frac{3}{4}(\bar{m}(0) - \bar{m}(\eta_k)) + \frac{3}{16} \epsilon_{H}\|\eta_k\|^2
		\leqslant & C_{d}\left\|\eta_k\right\|^{2+\mu\w\nu} + C_{R}\|g_k\|\|\eta_k\|^{1+\nu},
	\end{align}
	where $C_{d}\coloneqq C_{H}+ (1+C_{R}\bar{\Delta}^{\nu})\beta_{H}C_{R}$.

	\case{\rom{1}} If $k\in \mathcal{I}$, \cref{eq:deltamin-1} and \cref{lem:g-bound} gives
	$$
		\frac{3}{16}\epsilon_{H} \le C_{d}\|\eta_k\|^{\mu\w\nu} + C_{R}(\beta_{H}+\epsilon_{H}/2)\|\eta_k\|^{\nu},
	$$
	which further gives
	\begin{equation}\label{eq:deltamin-2}
		\Delta_k \ge \|\eta_k\| \ge \left( \frac{3\epsilon_{H}}{16(C_{d} + C_{R}(\beta_H+\epsilon_{H} /2))} \right)^{\frac{1}{\mu\w\nu}}.
	\end{equation}

	\case{\rom{2}} If $k\in \mathcal{B}$, by \citep[Lemma 4.3]{nocedal2006Numericaloptimization}, the Cauchy condition \cref{eq:tc-c} and \eqref{eq:deltamin-1} further give
	\begin{align}\label{eq:deltamin-4}
		\frac{3}{8}\|g_k\| \cdot \min\left\{\Delta_k, \frac{\|g_k\|}{\beta_H+\epsilon_H/2} \right\} + \frac{3}{16}\epsilon_{H}\Delta_{k}^2 \le
		C_{d}\Delta_{k}^{2+\mu\w\nu} + C_{R}\|g_k\|\Delta_{k}^{1+\nu}.
	\end{align}
	\case{\rom{2}.\rom{1}} If $\Delta_k \ge \|g_k\|/(\beta_H+\epsilon_H/2)$, since $\Delta_k = \|\eta_k\| \le 1$, \cref{eq:deltamin-4} gives
	\begin{align}\notag
		\frac{3}{16}\epsilon_{H}\Delta_{k}^2 \le
		C_{d}\Delta_{k}^{2+\mu\w\nu} + C_{R}\|g_k\|\Delta_{k}^{1+\nu}
		\le (C_d + C_R(\beta_H+\epsilon_H/2))\Delta_k^{2+\mu\w\nu},
	\end{align}
	which further gives the result in \cref{eq:deltamin-2}.\\
	\case{\rom{2}.\rom{2}} If $\Delta_k < \|g_k\| /(\beta_H+\epsilon_H/2)$,
	\cref{eq:deltamin-4} can be reformulated as
	$$
		\|g_k\| \Delta_{k} \left(\frac{3}{8}-C_{R}\Delta_{k}^{\nu}\right) \le C_{d}\Delta_k^{2 + \nu\wedge \mu} - \frac{3}{16}\epsilon_{H}\Delta_k^2.
	$$
	\case{\rom{2}.\rom{2}.\rom{1}} If $\Delta_k\le (\frac{3}{8C_{R}})^{1 /\nu}$, we have
	\begin{equation*}
		C_{d}\|\Delta_k\|^{2 + \nu\wedge \mu} - \frac{3}{16}\epsilon_{H}\|\Delta_k\|^2 \ge 0
	\end{equation*}
	which gives $\Delta_k\ge (\frac{3\epsilon_{H}}{16C_d} )^{\frac{1}{\mu\w\nu}}$.
	\case{\rom{2}.\rom{2}.\rom{2}} The other case is just $\Delta_k> (\frac{3}{8C_{R}})^{1 /\nu}$.
	Therefore, for \case{\rom{2}.\rom{2}}, we have
	\begin{equation}\label{eq:deltamin-3}
		\Delta_k \ge \min \left\{\left(\frac{3}{8C_{R}}\right)^{\frac{1}{\nu}}, \left( \frac{3\epsilon_{H}}{16C_{d}} \right)^{\frac{1}{\mu\w\nu}}\right\}.
	\end{equation}
	Combining \cref{eq:deltamin-2,eq:deltamin-3},
	and the initial assumption $\|\eta_k\|\le 1$ gives
	$$
		\begin{aligned}
			\Delta_k \ge & \min \left\{1, \left(\frac{3}{8C_{R}}\right)^{\frac{1}{\nu}}, \left( \frac{3\epsilon_{H}}{16C_{d}} \right)^{\frac{1}{\mu\w\nu}},
			\left( \frac{3\epsilon_{H}}{16(C_{d} + C_{R}(\beta_H+\epsilon_{H}/2))} \right)^{\frac{1}{\mu\w\nu}}
			\right\}.
		\end{aligned}
	$$
	Combining the above cases, we complete the proof with $C_{\Delta}$ defined accordingly.
\end{proof}

\begin{lemma}[Minimal successful decrease of RTR]\label{lem:obj-dec-tr}
	There exists a positive constant $C_g$ such that for any $k\in \mathcal{S}$, we have
	$$
		f(x_k) - f(x_{k+1}) \ge \frac{\varrho\epsilon_{H}}{4}\begin{cases}
			\displaystyle \Delta_{k}^{2} , \quad                                                                                  & k\in \mathcal{B} \cap \mathcal{S},     \\
			C_{g}^2\displaystyle \min \{ \|g_{k+1}\|^{4 /(1+\alpha)}\epsilon_{H}^{-2 /\alpha}, \epsilon_{H}^{2 /\alpha} \}, \quad & k\in \mathcal{I}\cap \mathcal{S}_{G},  \\
			\displaystyle (\epsilon_{H} /2)^{2 /\theta} , \quad                                                                   & k\in \mathcal{I} \cap \mathcal{S}_{L}.
		\end{cases}
	$$
\end{lemma}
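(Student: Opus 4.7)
The starting point is the successful-step condition $\rho_k\ge \varrho$. Combined with the Cauchy inequality \cref{eq:cauchy} and the explicit RTR regularization $\phi^{\mathrm{tr}}(\eta)=\epsilon_H\|\eta\|^2/4$, every $k\in\mathcal{S}$ satisfies
$$
f(x_k)-f(x_{k+1})\;\ge\;\varrho\bigl(m_{x_k}(0)-m_{x_k}(\eta_k)\bigr)\;\ge\;\varrho\,\phi^{\mathrm{tr}}(\eta_k)\;=\;\frac{\varrho\epsilon_H}{4}\|\eta_k\|^2.
$$
Thus the whole lemma reduces to lower-bounding $\|\eta_k\|^2$ in each of the three cases.

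The case $k\in\mathcal{B}\cap\mathcal{S}$ is immediate from $\|\eta_k\|=\Delta_k$. For $k\in\mathcal{I}\cap\mathcal{S}_L$, the second-order test fails so $\lambda_{\min}(H_k)<-\epsilon_H$; \cref{lem:tc}(2) together with $\lambda_{\max}(\hess\phi^{\mathrm{tr}}(\eta_k))=\epsilon_H/2$ then gives
$$
\epsilon_H \;<\;-\lambda_{\min}(H_k)\;\le\;\|\eta_k\|^{\theta}+\epsilon_H/2,
$$
whence $\|\eta_k\|\ge(\epsilon_H/2)^{1/\theta}$, matching the third line of the claim.

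The interesting case is $k\in\mathcal{I}\cap\mathcal{S}_G$, where TC.1 is active and $\|\eta_k\|<\Delta_k$. Here \cref{lem:tc}(1), specialized to $\|\grad\phi^{\mathrm{tr}}(\eta_k)\|=\epsilon_H\|\eta_k\|/2$, gives
$$
\|g_{k+1}\|\;\le\;C_{s,1}\|\eta_k\|^{1+\alpha}+\tfrac{\epsilon_H}{2}\|\eta_k\|,
\qquad C_{s,1}\coloneqq 2C_H(C_R+1)+\beta_HC_R+1.
$$
I split on which term dominates. In Sub-case A ($C_{s,1}\|\eta_k\|^{1+\alpha}\ge\epsilon_H\|\eta_k\|/2$), I obtain $\|\eta_k\|^\alpha\ge\epsilon_H/(2C_{s,1})$ directly, so $\|\eta_k\|^2\gtrsim\epsilon_H^{2/\alpha}$, which feeds the second term of the announced minimum. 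In Sub-case B (regularization term dominates) the bound collapses to $\|g_{k+1}\|\le\epsilon_H\|\eta_k\|$, so $\|\eta_k\|^2\ge\|g_{k+1}\|^2/\epsilon_H^2$; moreover the Sub-case B condition forces $\|g_{k+1}\|\lesssim\epsilon_H^{(1+\alpha)/\alpha}$.

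The crux is showing that in Sub-case B the bound $\|g_{k+1}\|^2/\epsilon_H^2$ is no smaller than $\|g_{k+1}\|^{4/(1+\alpha)}\epsilon_H^{-2/\alpha}$ so that the two sub-cases can be combined into the single $\min$ form claimed. Taking the ratio, this is equivalent to $(\epsilon_H^{(1+\alpha)/\alpha}/\|g_{k+1}\|)^{2(1-\alpha)/(1+\alpha)}\ge 1$, which is exactly the Sub-case B condition (and trivial when $\alpha=1$). Combining the two sub-cases then yields $\|\eta_k\|^2\ge C_g^2\min\{\|g_{k+1}\|^{4/(1+\alpha)}\epsilon_H^{-2/\alpha},\,\epsilon_H^{2/\alpha}\}$ for a suitable $C_g>0$. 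The main obstacle is precisely this exponent-bookkeeping step: the three regimes in the lemma correspond to three different mechanisms forcing $\|\eta_k\|$ to be large, and the $\min$ of the two bounds in the $\mathcal{S}_G$ line must be reconciled carefully with the Sub-case split to obtain a uniform-in-$\alpha$ statement.
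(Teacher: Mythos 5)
Your proof takes the same skeleton as the paper for the opening reduction $f(x_k)-f(x_{k+1})\ge\frac{\varrho\epsilon_H}{4}\|\eta_k\|^2$ and for the $\mathcal{B}\cap\mathcal{S}$ and $\mathcal{I}\cap\mathcal{S}_L$ branches, but the $\mathcal{I}\cap\mathcal{S}_G$ branch follows a genuinely different path. The paper splits on whether $\epsilon_H\|\eta_k\|\le\epsilon_H^{(1+\alpha)/(2\alpha)}\|\eta_k\|^{(1+\alpha)/2}$; this particular threshold is engineered so that (i) in the first sub-case both summands of the \cref{eq:tc-1} bound can be replaced by $\tfrac12\epsilon_H^{(1+\alpha)/(2\alpha)}\|\eta_k\|^{(1+\alpha)/2}$ and the resulting inequality is solved by citing \citep[Lemma 17]{royer2018Complexityanalysis}, while (ii) that same condition forces $\|\eta_k\|\le1$ (for $\alpha<1$), which is exactly the hypothesis under which \cref{lem:tc}(1) applies; the complementary sub-case gives $\|\eta_k\|>\epsilon_H^{1/\alpha}$ with no appeal to \cref{lem:tc} at all. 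You instead split on which summand of $C_{s,1}\|\eta_k\|^{1+\alpha}+\tfrac{\epsilon_H}{2}\|\eta_k\|$ dominates, obtaining $\|\eta_k\|^2\gtrsim\epsilon_H^{2/\alpha}$ in Sub-case A and $\|\eta_k\|\ge\|g_{k+1}\|/\epsilon_H$ in Sub-case B, and then close the gap by showing that Sub-case B forces $\|g_{k+1}\|\lesssim\epsilon_H^{(1+\alpha)/\alpha}$, which makes $\|g_{k+1}\|^2\epsilon_H^{-2}\ge\|g_{k+1}\|^{4/(1+\alpha)}\epsilon_H^{-2/\alpha}$. Your exponent computation is correct (and uses $C_{s,1}\ge1$ so that $(2C_{s,1})^{1/\alpha}\ge1$), and the argument is more elementary in that it avoids the external root-bounding lemma the paper cites. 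Neither route is obviously tighter; the paper's threshold has the side benefit of bundling in the $\|\eta_k\|\le1$ hypothesis for free.

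The one real gap is precisely that side benefit you lose: you invoke \cref{lem:tc}(1) without verifying its stated hypothesis $\|\eta_k\|\le1$, and your dominance split does not automatically rule out $\|\eta_k\|>1$. The fix is a single line (if $\|\eta_k\|>1$ then $\|\eta_k\|^2>1\ge\epsilon_H^{2/\alpha}\ge\min\{\cdot\}$, and the paper caps $C_g$ at $1$ for exactly this reason), but as written the case is unaddressed. Also note that the paper's working constant $C_g'$ in this step replaces the ``$+1$'' in $C_{s,1}$ by $(\beta_H+\epsilon_H/2)^{1+\theta}$, combining \cref{lem:tc} with \cref{lem:g-bound}; your derivation uses \cref{lem:tc}(1) literally, which is cleaner and equally valid, so this is a paper-side presentation choice rather than an error on your part.
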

\begin{proof}
	First of all, since the Cauchy termination criterion \cref{eq:tc-c} is enforced for all cases, for any $k\in \mathcal{S}$, we have
	\begin{equation}\label{eq:s-dec-0}
		f(x_k) - f(x_{k+1}) \ge \varrho(m_{x_k}(0) - m_{x_k}(\eta_k))
		= \varrho(\bar{m}_{x_k}(0)-\bar{m}_{x_k}(\eta_k) + \epsilon_{H}\|\eta_k\|^2/4)
		\ge \frac{\varrho\epsilon_{H}}{4}\|\eta_k\|^2.
	\end{equation}
	Therefore, we only need to figure out a lower bound of $\|\eta_k\|$.

	\case{\rom{1}} For $k\in \mathcal{B}\cap \mathcal{S}$, we have $\|\eta_k\| = \Delta_k$.

	\case{\rom{2}} For $k\in \mathcal{I}\cap \mathcal{S}_{G}$, residual termination condition \cref{eq:tc-1} is used.\\
	\case{\rom{2}.\rom{1}} If $\epsilon_{H}\|\eta_k\| \le \epsilon_{H}^{\frac{1+\alpha}{2\alpha}}\|\eta_k\|^{\frac{1+\alpha}{2}}$, we have $\|\eta_k\|\le 1$, which together with \cref{lem:tc,lem:g-bound} gives
	\begin{equation} \label{eq:s-dec-5}
		C_{g}'\|\eta_k\|^{1+\alpha} + \frac{1}{2}\epsilon_{H}^{\tfrac{1+\alpha}{2\alpha}}\|\eta_k\|^{\tfrac{1+\alpha}{2}} - \|g_{k+1}\| \ge 0,
	\end{equation}
	where $C_{g}'\coloneqq 2C_{H}(1+C_{R})+\beta_H C_{R}+(\beta_H+\epsilon_{H}/2)^{1+\theta}$.
	By \citep[Lemma 17]{royer2018Complexityanalysis}, the solution of this inequality indicates
	\begin{align}\notag
		\|\eta_k\|
		 & \ge \left( \frac{-1 + \sqrt{1 + 16C_{g}'\|g_{k+1}\|\epsilon_{H}^{-\frac{1+\alpha}{\alpha}}}}{4C_g'}\epsilon_{H}^{\frac{1+\alpha}{2\alpha}} \right)^{\tfrac{2}{1+\alpha}}                            \\\notag
		 & \ge \left( \frac{-1 + \sqrt{1 + 16C'_{g}}}{4C'_g}\min \left\{ \|g_{k+1}\|\epsilon_{H}^{-\tfrac{1+\alpha}{2\alpha}}, \epsilon_{H}^{\tfrac{1+\alpha}{2\alpha}} \right\} \right)^{\tfrac{2}{1+\alpha}} \\\label{eq:s-dec-4}
		 & \eqqcolon C_{g} \min \{ \|g_{k+1}\|^{2 /(1+\alpha)}\epsilon_{H}^{-1 /\alpha}, \epsilon_{H}^{1 /\alpha} \}.
	\end{align}
	\case{\rom{2}.\rom{2}} If $\epsilon_{H}\|\eta_k\| > \epsilon_{H}^{\frac{1+\alpha}{2\alpha}}\|\eta\|^{\frac{1+\alpha}{2}}$ , we have
	$$
		\|\eta_k\|^{1 - \tfrac{1+\alpha}{2}} > \epsilon_{H}^{\tfrac{1+\alpha}{2\alpha} - 1}
		\quad\Longrightarrow\quad
		\|\eta_k\| > \epsilon_{H}^{1 /\alpha}.
	$$
	Set $C_g \coloneqq  C_{g} \w 1$. Then
	\case{\rom{2}.\rom{2}} also satisfies \cref{eq:s-dec-4}.

	\case{\rom{3}} For $k\in \mathcal{I}\cap \mathcal{S}_{L}$, since the algorithm does not terminate, residual termination condition \cref{eq:tc-2} is used and \cref{lem:tc} gives
	$$
		\epsilon_{H} < -\lambda_{\min}(H_k) \le \|\eta_k\|^{\theta} + \frac{\epsilon_{H}}{2},
	$$
	which further gives $\|\eta_k\| \ge (\epsilon_{H} /2)^{1 /\theta}$.
	Combining \cref{eq:s-dec-0} and \case{\rom{1}-\rom{3}} gives the result.
\end{proof}
We should note that the termination criterion \cref{eq:tc-2} might appear less practical in the context of RTR because of the fixed nature of $\hess \bar{m}^{\mathrm{tr}}$. However, when this criterion is applied, i.e., when $H_k \not\succeq -\epsilon_{H}I$, we anticipate that the Krylov subspace method will be capable of identifying the negative curvature of the model problem and return a solution located on the trust region boundary.
Therefore, we can omit \cref{eq:tc-2} and rely solely on \cref{eq:tc-t}.
To be consistent with RAR, here we retain \cref{eq:tc-2} as an early stopping criterion, and its inexactness can potentially expedite the process. We will give a concrete subproblem solver and an alternative termination criterion to replace \cref{eq:tc-2} for RTR in \cref{sec:tr-oc}.

Plugging \cref{lem:deltamin,lem:obj-dec-tr} into \cref{cor:K} gives the iteration complexity of RTR.

\begin{theorem}[Iteration complexity of RTR] \label{thm:tr}
	Under \cref{asmp},
	\cref{alg:tr} finds an $(\epsilon_{g},\epsilon_{H})$-approximate second-order stationary point with the following worst-case iteration complexity:
	$$
		O\left(\max\left\{
		\epsilon_{H}^{-1-\tfrac{2}{\alpha}},
		\epsilon_{g}^{-\tfrac{4}{1+\alpha}}\epsilon_{H}^{-1 + \tfrac{2}{\alpha}} \right\}\right),
	$$
	where $\alpha = \mu\w\nu\w\theta$.
\end{theorem}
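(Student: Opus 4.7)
The plan is to invoke \cref{cor:K} (which packages \cref{lem:K,lem:S}) in the RTR setting, with the trust region radius $\Delta_k$ playing the role of the ``regularization parameter'' $\sigma_k$. According to \cref{alg:tr}, $\Delta_k$ only expands or stays constant on successful iterations, strictly shrinks by the factor $\kappa_1<1$ on unsuccessful ones, and is uniformly capped by $\bar{\Delta}$; this fits the second branch of \cref{lem:K}. Combined with the lower bound $\ubar{\Delta}=C_{\Delta}\epsilon_H^{1/(\mu\w\nu)}$ from \cref{lem:deltamin}, the ``unsuccessful-iteration'' contribution becomes $O(\log(\bar{\Delta}/\ubar{\Delta}))=O(\log(1/\epsilon_H))$, which I expect to be absorbed by the polynomial terms derived below.

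Next I would convert the three cases of \cref{lem:obj-dec-tr} into the two per-class minimal successful decreases $\epsilon_1$ (over $\mathcal{S}_{GG}$, from \cref{line:1}) and $\epsilon_2$ (over $\mathcal{S}_L$, from \cref{line:2}) demanded by \cref{lem:S}. For $k\in\mathcal{S}_L$, either $k\in\mathcal{B}$, giving a decrease $\Theta(\epsilon_H\Delta_k^2)\ge \Theta(\epsilon_H\ubar{\Delta}^2)=\Theta(\epsilon_H^{1+2/(\mu\w\nu)})$, or $k\in\mathcal{I}$, giving $\Theta(\epsilon_H^{1+2/\theta})$; since $\mu\w\nu\ge\alpha$ and $\theta\ge\alpha$ and $\epsilon_H\le 1$, both are $\Omega(\epsilon_H^{1+2/\alpha})$, so $\epsilon_2=\Omega(\epsilon_H^{1+2/\alpha})$. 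For $k\in\mathcal{S}_{GG}$, the boundary sub-case is again $\Omega(\epsilon_H^{1+2/\alpha})$, while the interior sub-case uses $\|g_{k+1}\|>\epsilon_g$ to turn the $\min$ in \cref{lem:obj-dec-tr} into $\Omega(\min\{\epsilon_g^{4/(1+\alpha)}\epsilon_H^{-2/\alpha},\epsilon_H^{2/\alpha}\})$, yielding $\epsilon_1=\Omega(\min\{\epsilon_H^{1+2/\alpha},\,\epsilon_g^{4/(1+\alpha)}\epsilon_H^{1-2/\alpha}\})$.

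Feeding $\min(\epsilon_1,\epsilon_2)=\Omega(\min\{\epsilon_H^{1+2/\alpha},\,\epsilon_g^{4/(1+\alpha)}\epsilon_H^{1-2/\alpha}\})$ into \cref{lem:S} produces $|\mathcal{S}|=O(\max\{\epsilon_H^{-1-2/\alpha},\,\epsilon_g^{-4/(1+\alpha)}\epsilon_H^{-1+2/\alpha}\})$, which dominates the $O(\log(1/\epsilon_H))$ log-ratio of radii. Invoking \cref{cor:K} then delivers the stated complexity bound on $|\mathcal{K}|$.

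The main obstacle is bookkeeping the case split. \cref{lem:obj-dec-tr} distinguishes boundary versus interior, and first- versus second-order triggering, and embeds a further $\min$ inside case II; I must verify that (i) the boundary contribution $\Theta(\epsilon_H^{1+2/(\mu\w\nu)})$ is always dominated by $\epsilon_H^{1+2/\alpha}$, which is immediate from $\epsilon_H\le 1$ and $\mu\w\nu\ge\alpha$, and (ii) the ``easy'' branch of the interior $\min$ never beats the advertised rates for $\mathcal{S}_{GG}$, again using $\alpha\le\mu\w\nu\w\theta$ and $\epsilon_H\le 1$. Once this is laid out, the rest is a direct calculation mirroring the proof of \cref{thm:ar}.
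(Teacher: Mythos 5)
Your proposal is correct and follows essentially the same route as the paper: invoke \cref{cor:K} (i.e.\ \cref{lem:K,lem:S}) with $\Delta_k$ as the adaptive parameter (second branch of \cref{lem:K}), use \cref{lem:deltamin} for the lower radius bound, plug the three cases of \cref{lem:obj-dec-tr} into the per-class minimal decreases over $\mathcal{S}_{GG}$ and $\mathcal{S}_L$, observe that the boundary contribution $\epsilon_H\ubar{\Delta}^2\gtrsim\epsilon_H^{1+2/\alpha}$ since $\mu\w\nu\ge\alpha$ and $\epsilon_H\le 1$, and note the $\log(\bar{\Delta}/\ubar{\Delta})$ term is dominated. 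Your bookkeeping is slightly more explicit but the argument is the one in the paper.
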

\begin{proof}
	By \cref{lem:deltamin}, the second logarithmic term in \cref{cor:K} is suppressed by the first term.
	For $k\in \mathcal{S}_{GG}$, by \cref{lem:deltamin,lem:obj-dec-tr}, we have
	$$
		f(x_k) - f(x_{k+1}) \!\ge \frac{\varrho}{4}\min \left\{\epsilon_{H}\ubar{\Delta}^2, C_{g}^2 \epsilon_{g}^{4 / (1+\alpha)}\epsilon_{H}^{1-2 /\alpha}, C_{g}^2\epsilon_{H}^{1+2 /\alpha}  \right\}
		\!\gtrsim \min \left\{\epsilon_{H}^{1+2 /\alpha}, \epsilon_{g}^{4 / (1+\alpha)}\epsilon_{H}^{1-2 /\alpha} \right\}.
	$$
	For $k\in \mathcal{S}_{L}$, by \cref{lem:deltamin,lem:obj-dec-tr}, we have
	$$
		f(x_k) - f(x_{k+1}) \!\ge \frac{\varrho}{4} \min \left\{\epsilon_{H}\ubar{\Delta}^2, 2^{-2 /\theta}\epsilon_{H}^{1 + 2/\theta}\right\} \gtrsim \epsilon_{H}^{1+2 /\alpha}.
	$$
	Combining two cases with \cref{cor:K} gives the result.
\end{proof}

\begin{corollary}[Optimal iteration complexity of RTR]\label{cor:tr}
	By choosing $\epsilon_{H} = \epsilon_{g}^{\alpha /(1+\alpha)}$, \cref{thm:tr} achieves the optimal (first-order) iteration complexity:
	\begin{equation*}
		O\left(\epsilon_{g}^{-\tfrac{{2 + \alpha}}{1+\alpha}}\right).
	\end{equation*}
\end{corollary}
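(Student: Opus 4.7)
The proof plan is a straightforward balancing exercise: I would substitute $\epsilon_{H} = \epsilon_{g}^{\alpha/(1+\alpha)}$ into the two-term max from the preceding iteration-complexity theorem for RTR and verify that the two branches collapse to a common exponent of $\epsilon_{g}$.

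Concretely, I would first rewrite $-1-\tfrac{2}{\alpha} = -\tfrac{2+\alpha}{\alpha}$ so that the first term $\epsilon_{H}^{-1-2/\alpha}$ becomes $\epsilon_{g}^{-\frac{\alpha}{1+\alpha}\cdot\frac{2+\alpha}{\alpha}} = \epsilon_{g}^{-(2+\alpha)/(1+\alpha)}$ after substitution. Next, I would rewrite $-1+\tfrac{2}{\alpha} = \tfrac{2-\alpha}{\alpha}\ge 0$ (using $\alpha\in(0,1]$), so that the mixed term $\epsilon_{g}^{-4/(1+\alpha)}\epsilon_{H}^{-1+2/\alpha}$ becomes $\epsilon_{g}^{-4/(1+\alpha) + (2-\alpha)/(1+\alpha)} = \epsilon_{g}^{-(2+\alpha)/(1+\alpha)}$. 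The two branches thus coincide, and the max reduces to the advertised bound $O(\epsilon_{g}^{-(2+\alpha)/(1+\alpha)})$.

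I do not anticipate any substantive obstacle; the result is purely a calibration corollary. The one mild conceptual point worth noting is that the stated scaling is actually the \emph{optimal} choice of $\epsilon_{H}$ in terms of $\epsilon_{g}$ for minimizing the upper bound: the first term is strictly decreasing in $\epsilon_{H}$, while the mixed term is nondecreasing in $\epsilon_{H}$ (because the exponent $(2-\alpha)/\alpha$ is nonnegative for $\alpha\le 1$). Hence the minimum of the max sits exactly at the point where the two terms balance, which is the stated scaling. Finally, optimality in the information-theoretic sense is inherited from matching the lower bound of \citet{cartis2011OptimalNewtontype} at the extremal case $\alpha=\mu$, where retraction smoothness and subproblem inexactness are not the bottleneck.
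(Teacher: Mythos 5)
Your proof is correct and is exactly the intended argument: the paper gives no explicit proof of this corollary, and there is essentially only one way to prove it, namely the direct substitution you carry out. Your arithmetic checks out on both branches (each reduces to $\epsilon_g^{-(2+\alpha)/(1+\alpha)}$), your monotonicity observation correctly identifies $\epsilon_H = \epsilon_g^{\alpha/(1+\alpha)}$ as the unique balance point that minimizes the two-term max, and the appeal to the lower bound of Cartis et al.\ for the optimality claim matches the paper's own discussion in the introduction.
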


Our analysis, especially in light of \cref{cor:ar,cor:tr}, reveals that the parameters $\mu$, $\nu$, and $\theta$ are \textit{equivalently} responsible for governing the iteration complexity of RARN, and this control is encapsulated by the single parameter $\alpha = \mu\w\nu\w\theta$. In essence, elevating any of these parameters in isolation will not improve the worst-case iteration complexity.
Furthermore, this observation can serve as a valuable guideline for algorithm design: when $\mu$, a value determined by the problem, is relatively large, we need to increase $\nu$ and $\theta$ to match the accuracy of the model function, which approximates the objective function, to achieve the optimal iteration complexity. Conversely, a smaller $\mu$ provides the flexibility to relax the smoothness requirement on the retraction and the precision requirement on the subproblem solver, thus effectively curtailing computational costs.

\begin{remark}
	Our \cref{alg:tr} shares a structural similarity with the method in \cite{curtis2021TrustRegionNewtonCG}, as both follow the standard trust region method procedure, and incorporate a non-adaptive quadratic regularization term into the quadratic model \cref{eq:model-tr}. However, key differences arise in other algorithmic components and analysis. First, our termination criteria \cref{eq:tc-1,eq:tc-2} allow for more controlled inexactness through parameters $\theta_1$ and $\theta_2$, which appear explicitly in the complexity bounds. 
	These criteria are carefully designed for $C^{2,\mu}$ objectives, with the optimal inexactness levels $\theta_1$ and $\theta_2$ depending explicitly on $\mu$.
	Second, our analysis accommodates general retractions $R$ with a \holder-continuous differential, whose smoothness order $\nu$ is likewise treated as an approximation parameter, with the optimal level $\nu$ depending explicitly on $\mu$.
	This treatment and analysis of general retractions marks a significant departure from the Euclidean approach in \cite{curtis2021TrustRegionNewtonCG}.
\end{remark}%

\ifSubfilesClassLoaded{\bibliography{ic}}{}
\section{Subproblem Solvers and Operation Complexity} 

In this section, we comprehensively analyze the subproblem solvers employed in RAR and RTR. Our focus centers on introducing Lanczos-based Krylov subspace methods as the central subproblem solver, complemented by integrating minimal eigenvalue oracles (MEO) to assess second-order stationarity.
Building upon these subproblem solvers, we provide insights into our algorithms' operation complexity, quantified in terms of the number of Hessian-vector products.

\subsection{Lanczos-Based Krylov Subspace Methods}\label{sec:kry}

We begin by restating the RARN subproblem:
\begin{equation}\label{eq:model-arn-2}
	\min_{\eta\in \tm{x}} \bar{m}(\eta)\coloneqq \left<\eta,g \right> + \frac{1}{2}\left<\eta,H\eta \right> + \phi(\|\eta\|,\sigma).
\end{equation}
In this formulation, we omit the reference to a specific point $x\in\M$ and the constant term $f(x)$. Additionally, we simplify the regularization function to only depend on the norm of $\eta$.
Krylov subspace methods aim to approximately minimize $\bar{m}$ by identifying solutions within specific subspaces known as the Krylov subspaces.
In this paper, for the automatic fulfillment of the Cauchy condition \cref{eq:tc-c}, we construct the Krylov subspaces based on $(H,g)$. Specifically, the order-$j$ Krylov subspace is defined as $\spa\{ g,Hg, \ldots ,H^{j-1}g \}$.
We denote $\xi_j$ as the order-$j$ Krylov subspace solution to \cref{eq:model-arn-2}.
Consequently, the Cauchy point $\eta^{\mathrm{C}} = \xi_1$ is the first-order Krylov subspace solution.
To efficiently compute the Krylov subspace solutions, we use the Lanczos method to construct an orthogonal Krylov subspace basis $Q_{j} = (q_0, \ldots, q_{j-1})$, transforming the $j$th subproblem iteration on the tangent space into the following one in $\R^{j}$:
$$
	\min_{u \in\R^{j}} \|g\|\cdot (u)_{1} + \frac{1}{2}u^*Q_{j}^* H Q_{j} u + \phi(\|u\|,\sigma),
$$
where $(u)_1 \in \R$ is the first component of $u\in \R^{j}$, $u^*$ is the transpose of $u$, and $Q^{*}_j:\tm{x}\to\R^{j}$ is the {adjoint} of $Q_{j}$, and we use the fact that $Q^*_{j}g = \|g\|e_{1}$ and $\|Q_{j}u\| = \|u\|$.
The tridiagonal structure of $Q^*_{j}HQ_{j}$ streamlines the efficient solution of the above problem, requiring only $O(1)$ Hessian-vector products.
For a more detailed explanation of Lanczos-based Krylov subspace methods, please refer to, e.g., {\citet[Section 8]{agarwal2021Adaptiveregularization}}, \citet[Chapter 10]{golub2013Matrixcomputationsa}, and \citet[Lecture 36]{trefethen2022Numericallinear}.
We present several conditioned operation complexities of Krylov subspace methods.

\begin{proposition}[Operation complexity of Krylov subspace methods]\label{prop:krylov}
	Suppose $\eta_k$ is returned by a Krylov subspace method, and $\eta^{*}$ is the exact solution to \cref{eq:model-arn-2}. We have
	\begin{enumerate}
		\item {with a perturbed Krylov subspace basis%
		      \footnote{For RAR, we directly invoke a Krylov subspace method, which may encounter ``hard cases'' when $\left< g_k,\nu_k \right>$ with a regular basis discussed in \cref{sec:kry}, where $\nu_{k}$ is the corresponding eigenvector of $\lambda_{\min}(H_k)$.
			      Therefore, we use a perturbed Krylov subspace basis for RAR to escape the hard case and thus provide an explicit maximal number of subproblem iterations. Please refer to \cref{sec:prop-krylov} and \cite[Section 5.2]{carmon2020FirstOrderMethods} for more details.
		      }%
		      ,} for the adaptive regularization subproblem, $\eta_k$ is an $\epsilon$-optimal solution to \cref{eq:model-arn-2} with {probability at least $1-\delta$ and} at most ${C_{\mathrm{sub},1}\|\eta^*\|\epsilon^{-1 /2}\log(n /\delta)}$ Hessian-vector product operations, {where $n$ is the manifold dimension} \citep[Corollary 4.2]{carmon2020FirstOrderMethods}%
		      \footnote{\citet{carmon2020FirstOrderMethods} considers cubic regularization with fixed regularization parameter, but it still applies here because the implied complexity does not involve the regularization order or parameter.
		      }%
		      ; \label{prop:krylov-ar}
		\item for the adaptive regularization subproblem, if ${H}\succeq \epsilon_{H}I$, $\eta_k$ is an $\epsilon$-optimal solution to \cref{eq:model-arn-2} with at most $C_{\mathrm{sub},2}\lambda_{\min}(H)^{-1 /2} \log \epsilon^{-1}$ Hessian-vector product operations \citep[Corollary 4.2]{carmon2020FirstOrderMethods};\label{prop:krylov-ar-2}
		\item for the \textup{non-regularized} trust region subproblem, if $H\succeq \epsilon_{H}I$, $\eta_k$ is an $\epsilon$-optimal solution to \cref{eq:model-arn-2} with at most $C_{\mathrm{sub},2}\lambda_{\min}({H})^{-1 /2} \log \epsilon^{-1}$ Hessian-vector product operations \citep[Theorem 4.1]{carmon2020FirstOrderMethods}.\label{prop:krylov-tr}
	\end{enumerate}
	{
	Constants $C_{\mathrm{sub},1}$ and $C_{\mathrm{sub},2}$ are given by
	\begin{align*}
		C_{\mathrm{sub},1} & =
		2\sqrt{\beta_{H}\left( 2 + \log^{2}(2\sqrt{n}/\delta) \right)}, \\
		C_{\mathrm{sub},2} & =
		\frac{\sqrt{\beta_{H}}}{4} \left(1 + \log \left( 36 \left(1 + \frac{\beta_{H}}{2} + \frac{\bar{\sigma}}{1+\alpha} + \bar{\Delta} + \frac{\beta_{H}+2\epsilon_{H}}{2}\bar{\Delta}^{2}\right)\left(1 \vee \beta_{g} \vee \left(\frac{3(\beta_{H}+1)}{\ubar{\sigma}}\right)^{1 /\alpha} \right)^{2+\alpha} \right)\right)
		.\end{align*}
	}
\end{proposition}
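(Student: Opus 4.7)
Because $\tm{x}$ is a finite-dimensional Hilbert space isomorphic to $\R^{n}$, the Euclidean Krylov-subspace complexity results of \citet{carmon2020FirstOrderMethods} transfer verbatim once the problem-dependent constants are controlled. The plan is to invoke the appropriate Carmon--Duchi result for each of the three cases and then trace the constants through the bounds already established earlier in the paper (in particular \cref{lem:sigmabar}, \cref{prop:s}, and \cref{prop:g}, together with the definitions of $\beta_{H}$, $\bar\sigma$, $\bar\Delta$, and $\beta_{g}$) to justify the explicit forms of $C_{\mathrm{sub},1}$ and $C_{\mathrm{sub},2}$.

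For \cref{prop:krylov-ar}, the model $\bar m$ is in general nonconvex, and a direct Krylov iteration built from $(H,g)$ can fail in the ``hard case'' in which $g$ is orthogonal to the eigenspace of $\lambda_{\min}(H)$. The standard remedy, as in \cite[Section 5.2]{carmon2020FirstOrderMethods}, is to perturb the initial vector by isotropic Gaussian noise of appropriate scale; a concentration argument then guarantees, with probability at least $1-\delta$, that the perturbed initial vector has a component of size $\Omega(\delta/\sqrt n)$ on the critical eigenvector. Plugging this into the accelerated Krylov convergence bound of Carmon--Duchi gives an iteration count of order $\sqrt{\beta_{H}}\,\|\eta^{*}\|\,\epsilon^{-1/2}\log(n/\delta)$, matching the announced $C_{\mathrm{sub},1}$ up to the explicit logarithmic factor $\sqrt{2+\log^{2}(2\sqrt n/\delta)}$ that tracks the tail of the Gaussian perturbation.

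For \cref{prop:krylov-ar-2} and \cref{prop:krylov-tr}, the curvature assumption $H\succeq \epsilon_{H}I$ makes the quadratic part of $\bar m$ strongly convex, so a Krylov iteration behaves like conjugate gradient on a positive-definite system and converges linearly at rate $1-\Omega(\sqrt{\lambda_{\min}(H)/\beta_{H}})$, yielding the $\sqrt{\beta_{H}/\lambda_{\min}(H)}\log(1/\epsilon)$ iteration count. The substantive work is to control the initial suboptimality $\bar m(0)-\bar m(\eta^{*})$ that appears \emph{inside} the logarithm uniformly across all outer iterations. Here \cref{prop:s} bounds $\|\eta^{*}\|$ by $1\vee \beta_{g}\vee (3(\beta_{H}+1)/\ubar\sigma)^{1/\alpha}$ in the adaptive-regularization case and by $\bar\Delta$ in the trust-region case; combining this with the standard Lipschitz/growth estimates for the quadratic term (coefficient $\beta_{H}/2$), the regularizer (coefficient $\bar\sigma/(2+\alpha)$ evaluated at the norm bound above, bounded by $\bar\sigma/(1+\alpha)$ after accounting for a factor from gradient--step-size coupling), and the trust-region Hessian shift (coefficient $(\beta_{H}+2\epsilon_{H})\bar\Delta^{2}/2$) reproduces exactly the argument of the logarithm in the stated $C_{\mathrm{sub},2}$.

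The main obstacle is precisely this bookkeeping in the last step: one must simultaneously handle the adaptive-regularization and trust-region subproblems, accommodate the case split in \cref{prop:s} between the small-gradient and large-gradient regimes, and verify that the single expression $1\vee \beta_{g}\vee (3(\beta_{H}+1)/\ubar\sigma)^{1/\alpha}$ dominates $\|\eta^{*}\|$ in every branch so that the worst-case suboptimality bound holds uniformly. Once these uniform bounds are in place, the three complexity claims follow by direct substitution into the Carmon--Duchi convergence rates.
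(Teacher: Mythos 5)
Your proposal is correct and follows essentially the same route as the paper's own justification (given as a remark in the appendix): cite Carmon--Duchi's Krylov convergence bounds, handle the hard case for the nonconvex regularized subproblem via a perturbed starting vector/basis, and then bound the initial model suboptimality $\bar m(0)-\bar m(\eta^*)$ uniformly using \cref{prop:s} (giving $\|\eta^*\|\le 1\vee\beta_g\vee(3(\beta_H+1)/\ubar\sigma)^{1/\alpha}$ for RAR and $\|\eta^*\|\le\bar\Delta$ for RTR) so that the constants can be made explicit. One small correction in your bookkeeping: the passage from $\tfrac{\bar\sigma}{2+\alpha}$ to $\tfrac{\bar\sigma}{1+\alpha}$ in $C_{\mathrm{sub},2}$ is not due to any ``gradient--step-size coupling'' but is simply the elementary inequality $\tfrac{1}{2+\alpha}\le\tfrac{1}{1+\alpha}$ used to make the combined RAR/RTR constant cleaner.
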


{
Some further discussion on this proposition is provided in \cref{sec:prop-krylov} and \citet{carmon2020FirstOrderMethods}.
}

\subsection{Minimal Eigenvalue Oracle}\label{sec:meo}

We have not yet specified how we test for second-order stationarity and what to do when $H \not\succeq \epsilon_{H}I$ in \cref{prop:krylov-ar-2,prop:krylov-tr} of \cref{prop:krylov}.
To address this, we assume our algorithm can access a minimal eigenvalue oracle (MEO). Following \citet{curtis2021TrustRegionNewtonCG}, we require the MEO to indicate whether $H \succeq -\epsilon_{H}I$ or, if not, to return a unit vector $\eta^{\mathrm{E}}$ such that
\begin{equation}\label{eq:tc-e}\tag{TC.E}
	\left<\eta^{\mathrm{E}}, g \right> \le 0 \quad\text{and}\quad \left<\eta^{\mathrm{E}},H\eta^{\mathrm{E}} \right> \le -\frac{1}{2}\epsilon_{H}\|\eta^{\mathrm{E}}\|^2,
\end{equation}
with at most $O(n\w \epsilon_{H}^{-1 /2}\log(1 /\delta))$ Hessian-vector products, where $n$ is the manifold dimension and $\delta$ is the probability that the MEO incorrectly claims $H \succeq -\epsilon_{H}$.
Actually, Krylov subspace methods, with a Lanczos-based orthogonal basis or an $H$-conjugate basis (resulting in conjugate gradient methods), satisfy these requirements with an operation complexity of $O(n\w \epsilon_{H}^{-1 /2}\log(n / \delta^2))$ \citep[Appendix B.2]{royer2020NewtonCGAlgorithm}.

\subsection{Operation Complexity of RAR}
In this subsection, we provide a concrete algorithm of adaptive regularization methods with a strong operation complexity guarantee.
To simplify the analysis, we set $\alpha \coloneqq \mu = \nu = \theta = \omega$, the optimal parameter profile discussed in previous sections.
As for the subproblem solver, we substitute Lines \ref{line:ar_sub_beg} to \ref{line:ar_sub_end} of \cref{alg:ar} with \cref{alg:ar-sub}.
Based on \cref{prop:krylov-ar} of \cref{prop:krylov}, we limit the number of subproblem iterations to $K_{\mathrm{sub}} = {O}(\epsilon_{H}^{-1 /2})$, and propose a new termination criterion:
\begin{equation}\label{eq:tc-d}\tag{TC.D}
	m(0) - m(\eta) \ge \frac{\alpha\epsilon_{H}^{(2+\alpha) / \alpha}}{12\bar{\sigma}^{2 /\alpha}}.
\end{equation}
which directly fulfills the requirement for minimal successful decrease (\cref{lem:obj-dec-ar}, albeit potentially with a different constant), rendering \cref{eq:tc-2} obsolete.
We have the following proposition.

\begin{proposition}\label{prop:tcd}
	If $H_k \not\succeq -\epsilon_{H}I$, then a Krylov subspace method {with a perturbed basis} will find a solution satisfying \cref{eq:tc-d} {with probability at least $1-\delta$ and} within at most ${C_{\mathrm{sub}}^{\mathrm{ar}} }\epsilon_{H}^{-1 /2}$ iterations, {where
			\[
				C_{\mathrm{sub}}^{\mathrm{ar}} =
				\sqrt{\frac{48\beta_{H}\bar{\sigma}}{\alpha \ubar{\sigma}}\left( 2 + \log^{2}\frac{2\sqrt{n}}{\delta} \right)} + \frac{1}{2}
				,\]
			where $n$ is the dimension of the manifold.
		}
\end{proposition}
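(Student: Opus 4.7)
The plan is to combine the $\epsilon$-optimality guarantee of the perturbed Lanczos-based Krylov method from \cref{prop:krylov} with a lower bound on the optimal regularized decrease enforced by the assumed negative curvature. First, since $H_k\not\succeq -\epsilon_H I$, I pick a unit eigenvector $v$ of $H_k$ at its minimum eigenvalue (flipping sign so that $\langle v,g_k\rangle\le 0$) and test the regularized model along $v$: for every $t\ge 0$,
\[
\bar m_{x_k}^{\mathrm{ar}}(\eta^*)\;\le\;\bar m_{x_k}^{\mathrm{ar}}(tv)\;\le\;-\frac{t^2\epsilon_H}{2}+\frac{\sigma_k t^{2+\alpha}}{2+\alpha}.
\]
Optimizing over $t\ge 0$ (at $t^\star=(\epsilon_H/\sigma_k)^{1/\alpha}$) and using $\sigma_k\le\bar\sigma$ yields the key bound
\[
\bar m_{x_k}^{\mathrm{ar}}(0)-\bar m_{x_k}^{\mathrm{ar}}(\eta^*)\;\ge\;\frac{\alpha\epsilon_H^{(2+\alpha)/\alpha}}{2(2+\alpha)\bar\sigma^{2/\alpha}}.
\]

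Next I invoke the first item of \cref{prop:krylov} with target accuracy $\epsilon:=\alpha\epsilon_H^{(2+\alpha)/\alpha}/(4(2+\alpha)\bar\sigma^{2/\alpha})$, i.e.\ half of the lower bound just established: with probability at least $1-\delta$, the perturbed-basis Krylov method returns $\eta_k$ with $\bar m_{x_k}^{\mathrm{ar}}(\eta_k)-\bar m_{x_k}^{\mathrm{ar}}(\eta^*)\le\epsilon$ within at most $C_{\mathrm{sub},1}\|\eta^*\|\epsilon^{-1/2}$ Hessian-vector products. Since $\phi(\cdot;\sigma_k)\ge 0$,
\[
m_{x_k}(0)-m_{x_k}(\eta_k)\;=\;\bar m_{x_k}^{\mathrm{ar}}(0)-\bar m_{x_k}^{\mathrm{ar}}(\eta_k)+\phi(\eta_k;\sigma_k)\;\ge\;\bigl(\bar m_{x_k}^{\mathrm{ar}}(0)-\bar m_{x_k}^{\mathrm{ar}}(\eta^*)\bigr)-\epsilon\;\ge\;\epsilon,
\]
and because $4(2+\alpha)\le 12$ for $\alpha\le 1$, this gives $m_{x_k}(0)-m_{x_k}(\eta_k)\ge \alpha\epsilon_H^{(2+\alpha)/\alpha}/(12\bar\sigma^{2/\alpha})$, which is exactly \cref{eq:tc-d}.

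To convert $C_{\mathrm{sub},1}\|\eta^*\|\epsilon^{-1/2}$ into the claimed $C_{\mathrm{sub}}^{\mathrm{ar}}\epsilon_H^{-1/2}$, I bound $\|\eta^*\|$ through \cref{prop:s}---the exact minimizer trivially meets \cref{eq:tc-c}---and substitute the explicit form of $\epsilon$. The main obstacle is securing the correct exponent $-1/2$ on $\epsilon_H$ in the final count: a uniform bound $\|\eta^*\|\le(3(\beta_H+1)/\ubar\sigma)^{1/\alpha}$ by itself only produces $\epsilon_H^{-(2+\alpha)/(2\alpha)}$. The sharper reduction must exploit the first-order stationarity condition $g_k+H_k\eta^*+\sigma_k\|\eta^*\|^{\alpha}\eta^*=0$ in the negative-curvature regime, which ties $\|\eta^*\|$ to $\sigma_k^{-1/\alpha}$ and cancels the excess $\bar\sigma^{1/\alpha}$ hidden inside $\epsilon^{-1/2}$; this cancellation reproduces the characteristic factor $\sqrt{\bar\sigma/(\alpha\ubar\sigma)}$ appearing in $C_{\mathrm{sub}}^{\mathrm{ar}}$, while the additive $1/2$ absorbs the ceiling of the non-integer iteration count.
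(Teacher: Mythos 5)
Your lower bound on the optimal regularized decrease is correct and is derived by a clean variational argument (test $\bar m$ along the minimum eigenvector) rather than via the exact optimality conditions the paper uses; both yield $\bar m(0)-\bar m(\eta^*)\ge \alpha\epsilon_H^{(2+\alpha)/\alpha}/(2(2+\alpha)\bar\sigma^{2/\alpha})$. The difference---and the source of the gap---is that the paper's optimality-condition route gives the sharper intermediate bound $\bar m(0)-\bar m(\eta^*)\ge\frac{\alpha}{2(2+\alpha)}\sigma_k\|\eta^*\|^{2+\alpha}$, expressed in terms of $\|\eta^*\|$, and this is what permits the crucial choice of an \emph{adaptive} target accuracy.

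The gap is in the iteration count, and you have correctly diagnosed its location but not resolved it. You fix the target accuracy
\[
\epsilon=\frac{\alpha\epsilon_H^{(2+\alpha)/\alpha}}{4(2+\alpha)\bar\sigma^{2/\alpha}}
\]
in advance and then need to bound $C_{\mathrm{sub},1}\|\eta^*\|\epsilon^{-1/2}$. With this fixed $\epsilon$, $\epsilon^{-1/2}\sim\epsilon_H^{-(2+\alpha)/(2\alpha)}$, and there is no uniform $\epsilon_H$-dependent \emph{upper} bound on $\|\eta^*\|$ (when $\|g_k\|$ is large, $\|\eta^*\|$ is governed by $\|g_k\|$, not $\epsilon_H$), so the product cannot reduce to $O(\epsilon_H^{-1/2})$. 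Your proposed remedy---``exploit the first-order stationarity condition, which ties $\|\eta^*\|$ to $\sigma_k^{-1/\alpha}$''---does not give what is needed: what that condition (together with the second-order one) yields is the \emph{lower} bound $\|\eta^*\|\ge(\epsilon_H/\sigma_k)^{1/\alpha}$, which is useless for controlling $\|\eta^*\|\epsilon^{-1/2}$ from above when $\epsilon$ is fixed. The correct repair is to take $\epsilon$ itself proportional to $\sigma_k\|\eta^*\|^{2+\alpha}$ (specifically $\epsilon=\alpha\sigma_k\|\eta^*\|^{2+\alpha}/12$, justified by the sharper intermediate bound above). Then
\[
C_{\mathrm{sub},1}\,\|\eta^*\|\,\epsilon^{-1/2}
= C_{\mathrm{sub},1}\left(\tfrac{\alpha\sigma_k}{12}\right)^{-1/2}\|\eta^*\|^{-\alpha/2},
\]
so the positive power of $\|\eta^*\|$ cancels, leaving a \emph{negative} power; the lower bound $\|\eta^*\|\ge(\epsilon_H/\sigma_k)^{1/\alpha}$ now acts in the right direction and gives $\|\eta^*\|^{-\alpha/2}\le\sigma_k^{1/2}\epsilon_H^{-1/2}\le\bar\sigma^{1/2}\epsilon_H^{-1/2}$, producing exactly $C_{\mathrm{sub}}^{\mathrm{ar}}\epsilon_H^{-1/2}$. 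The cancellation you correctly sense must happen is a cancellation between $\|\eta^*\|$ and $\epsilon^{-1/2}$, and it only occurs if $\epsilon$ is allowed to depend on $\|\eta^*\|$---your fixed choice of $\epsilon$ rules this out.

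Also note that with your $\epsilon$ the resulting decrease is $\ge\epsilon$, and you correctly observe $4(2+\alpha)\le 12$ so TC.D is met; but after adopting the paper's adaptive $\epsilon$ the chain $m(0)-m(\eta_k)\ge\bar m(0)-\bar m(\eta^*)-\epsilon\ge\frac{\alpha\sigma_k}{12}\|\eta^*\|^{2+\alpha}\ge\alpha\epsilon_H^{(2+\alpha)/\alpha}/(12\bar\sigma^{2/\alpha})$ replaces it, using $\frac{1}{2(2+\alpha)}-\frac{1}{12}\ge\frac{1}{12}$ for $\alpha\le 1$ followed by the lower bound on $\|\eta^*\|$.
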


{
\cref{prop:tcd} is proved
in \cref{sec:prop-tcd}.%
}
{We now break down \cref{alg:ar-sub} line by line.}
We first try solving the subproblem using a Lanczos-based Krylov subspace method (Lines \ref{line:ar-lk-beg} to \ref{line:ar-lk-end}) {with a perturbed subspace basis}.
By \cref{prop:tcd}, if \texttt{max\_flag} remains \texttt{true} after $K_{\mathrm{sub}}{= \lceil C_{\mathrm{sub}}^{\mathrm{ar}} \epsilon_{H}^{-1 /2} \rceil}$ iterations, we know that $H_k\succeq -\epsilon_{H}I$ {(Lines \ref{line:ar-max-beg} to \ref{line:ar-max-end})}.
Subsequently, if $\|g_k\| \le \epsilon_{g}$, it signifies that $x_k$ is an $(\epsilon_{g},\epsilon_{H})$-approximate second-order stationary point and we can terminate the algorithm {(\cref{line:ar-return-term})}.
If $\|g_k\| > \epsilon_{g}$, {because the Krylov subspace method fails to meet an appropriate termination criterion within the specified number of iterations,} we still need to efficiently find an iteration step that aims for the first-order stationarity.
Since now $H_k\succeq -\epsilon_{H}I$, we solve \cref{eq:model-arn-2} again with a regularized Hessian $\bar{H}_k = H_k + 2\epsilon_{H}I$ to obtain $\eta_k$ (\cref{line:ar-return-recal}),
which is guaranteed to satisfy the residual termination criterion \cref{eq:tc-1} with at most $\widetilde{O}(\epsilon_{H}^{-1 /2})$ iterations by \cref{prop:krylov-ar-2} of \cref{prop:krylov}.

On the other hand, even if \texttt{max\_flag = false}, we cannot assert that $H_k \not\succeq -\epsilon_{H}I$ (Lines \ref{line:ar-res-beg} to \ref{line:ar-res-end}). Therefore, when \texttt{max\_flag = false} and the first-order stationarity is achieved ($\|g_k\|\le \epsilon_g$), we still need to call an MEO to test the second-order stationarity. If the MEO claims the presence of the second-order stationarity, we terminate the algorithm  (\cref{line:ar-return-meo}); otherwise, we continue the algorithm with the returned subproblem solution $\eta_k=\xi_{j}$ (\cref{line:ar-return-cont}).

\begin{algorithm}[!th]
	\SetKw{Break}{break for}
	\caption{Subproblem solver for RAR} \label{alg:ar-sub}
	{
		\Input{Tangent space $\tm{x}$, $g_k$, $H_k$, tolorance $\epsilon_{g},\epsilon_{H}\in (0,1)$, interation constant $C_{\mathrm{sub}}^{\mathrm{ar}}$, MEO}
		\Output{$\eta_k$ or $x_k$}
	}
	set \texttt{max\_flag = true}\;
	\For {$j = 1, ..., K_{\mathrm{sub}}{=\lceil C_{\mathrm{sub}}^{\mathrm{ar}} \epsilon_{H}^{-1/2}\rceil}$} { \label{line:ar-lk-beg}
		get order-$j$ Krylov subspace solution $\xi _{j}$\;
		\If {($\|g_k\|> \epsilon_g$ and $\xi_j$ satisfies \cref{eq:tc-1}) or $\xi_{j}$ satisfies \cref{eq:tc-d}} { \label{line:tcd}
			set \texttt{max\_flag = false}\;
			\Break
		}
	} \label{line:ar-lk-end}
	\uIf {\texttt{max\_flag = true}} { \label{line:ar-max-beg}
		\uIf {$\|g_k\| \le \epsilon_{g}$} {
			\Return $x_k$ and terminate the outer algorithm \label{line:ar-return-term}
		} \Else {
			\Return $\eta_k$ by solving problem \cref{eq:model-ar} with $H_k$ replace with $\bar{H}_k = H_k + 2\epsilon_{H}I$ and termination criterion \cref{eq:tc-1} \label{line:ar-return-recal}
		} \label{line:ar-max-end}
	} \Else (\tcp*[h]{\texttt{max\_flag = false}}) {  \label{line:ar-res-beg}
		\uIf {$\|g_k\| \le \epsilon_{g}$} {
			call an MEO to test the second-order stationarity\;
			\If {the MEO indicates that $H_k \succeq - \epsilon_{H}I$} {
				\Return $x_k$ and terminate the outer algorithm \label{line:ar-return-meo}
			}
		} \Else {
			\Return $\eta_k = \xi_j$ \label{line:ar-return-cont}
		}
	} \label{line:ar-res-end}
\end{algorithm}

Due to the utilization of a regularized Hessian (\cref{line:ar-return-recal}), a minor adjustment is required for the iteration complexity of RAR (\cref{cor:ar}).
When combined with the operation complexity of Lanczos-based Krylov subspace methods and MEO, we obtain an operation complexity guarantee of RAR.
\begin{corollary}[Operation complexity of RAR] \label{cor:ar-oc}
	\cref{alg:ar,alg:ar-sub} finds an $(\epsilon_{g},\epsilon_{H})$-approximate second-order stationary point with the following worst-case operation complexity:
	$$
		\widetilde{O}\left(\max \left\{ \epsilon_{g}^{- \tfrac{2(2+\alpha)}{1+\alpha}}\epsilon_{H}^{\tfrac{2+\alpha}{\alpha}}, \epsilon_{H}^{-\tfrac{2+\alpha}{\alpha}} \right\} \cdot \epsilon_{H}^{-1 /2} \right),
	$$
	where $\widetilde{O}$ suppresses the logarithmic dependency on $\epsilon_{H}$.
	When $\epsilon_{H} = \epsilon_{g}^{\alpha /(1+\alpha)}$, the complexity becomes
	$$
		\widetilde{O}\left(\epsilon_{g}^{- \tfrac{4+3\alpha}{2(1+\alpha)}} \right),
	$$
	which further becomes $\widetilde{O}(\epsilon_{g}^{-7/4})$ when $\alpha = 1$.
\end{corollary}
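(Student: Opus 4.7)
The plan is to compose the outer-iteration count from \cref{cor:ar} with the per-iteration Hessian-vector product budget of the modified subproblem solver in \cref{alg:ar-sub}, and then specialize to the balanced choice $\epsilon_H = \epsilon_g^{\alpha/(1+\alpha)}$.

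First I would bound the per-iteration cost by $\widetilde{O}(\epsilon_H^{-1/2})$. The Lanczos loop runs at most $K_{\mathrm{sub}} = \lceil C_{\mathrm{sub}}^{\mathrm{ar}}\epsilon_H^{-1/2}\rceil$ iterations, each costing one Hessian-vector product, and by \cref{prop:tcd} this budget already suffices (with probability $1-\delta$) to meet \cref{eq:tc-d}, hence securing the minimal decrease of \cref{lem:obj-dec-ar} up to a constant, whenever $H_k\not\succeq -\epsilon_H I$. In the branch where \texttt{max\_flag} remains \texttt{true} after the full budget, \cref{prop:tcd} guarantees $H_k\succeq -\epsilon_H I$; if additionally $\|g_k\|>\epsilon_g$ then \cref{line:ar-return-recal} invokes a fresh Krylov solve on the shifted Hessian $\bar H_k = H_k + 2\epsilon_H I \succeq \epsilon_H I$, for which item 2 of \cref{prop:krylov} supplies a $\widetilde{O}(\lambda_{\min}(\bar H_k)^{-1/2}\log\epsilon^{-1}) = \widetilde{O}(\epsilon_H^{-1/2})$ bound. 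The MEO invocation on \cref{line:ar-return-meo} contributes another $\widetilde{O}(\epsilon_H^{-1/2})$ Hessian-vector products. Summing over all branches gives the claimed per-iteration cost.

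Next I would verify that the outer-iteration count of \cref{cor:ar} carries over to the pair (\cref{alg:ar}, \cref{alg:ar-sub}) up to a minor adjustment induced by the Hessian shift in \cref{line:ar-return-recal}. Because the returned $\eta_k$ satisfies \cref{eq:tc-1} only for the shifted model, the residual for the original $\bar m_{x_k}$ gains an extra additive term, namely $\|\grad\bar m_{x_k}(\eta_k)\| \le \|\eta_k\|^{1+\alpha} + 2\epsilon_H\|\eta_k\|$. Propagating this perturbation through the proof of \cref{lem:tc} weakens the step-size lower bound for $k\in\mathcal{S}_{GG}$ by the additional branch $\|\eta_k\|\gtrsim \epsilon_g/\epsilon_H$, and combining with \cref{lem:obj-dec-ar} and \cref{cor:K} yields an adjusted iteration-complexity bound $\widetilde{O}(\max\{\epsilon_g^{-2(2+\alpha)/(1+\alpha)}\epsilon_H^{(2+\alpha)/\alpha},\ \epsilon_H^{-(2+\alpha)/\alpha}\})$, whose first term reflects exactly this new worst case and whose two arguments coincide precisely under the balanced choice $\epsilon_H = \epsilon_g^{\alpha/(1+\alpha)}$.

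Multiplying the per-iteration cost $\widetilde{O}(\epsilon_H^{-1/2})$ by the adjusted iteration count yields the first displayed complexity. Substituting $\epsilon_H = \epsilon_g^{\alpha/(1+\alpha)}$ collapses both arguments of the max to $\epsilon_g^{-(2+\alpha)/(1+\alpha)}$, and the additional factor $\epsilon_H^{-1/2} = \epsilon_g^{-\alpha/(2(1+\alpha))}$ produces $\widetilde{O}(\epsilon_g^{-(4+3\alpha)/(2(1+\alpha))})$, specializing to $\widetilde{O}(\epsilon_g^{-7/4})$ at $\alpha = 1$. The main obstacle I anticipate is the perturbation bookkeeping needed to carry the extra $2\epsilon_H\|\eta_k\|$ term cleanly through the coupled step-size, regularization-parameter, and objective-decrease arguments so that the resulting iteration bound still admits the compact two-term max expression stated above.
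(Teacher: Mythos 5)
Your proposal follows essentially the same route as the paper's proof (\cref{sec:valid-aroc} together with \cref{sec:prop-krylov}): compose a per-iteration Hessian-vector budget of $\widetilde{O}(\epsilon_H^{-1/2})$ with an adjusted outer-iteration count, then specialize to $\epsilon_H=\epsilon_g^{\alpha/(1+\alpha)}$. Your per-iteration accounting — Lanczos budget $K_{\mathrm{sub}}$ via \cref{prop:tcd}, shifted-Hessian Krylov solve via item~\ref{prop:krylov-ar-2} of \cref{prop:krylov}, and the MEO call — matches the paper's.

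The one step you gloss over is exactly the ``perturbation bookkeeping'' you yourself flag at the end, and it is where the paper is more careful. For the shifted-Hessian branch (\cref{line:ar-return-recal}), the residual of the original model does gain the extra term $2\epsilon_H\|\eta_k\|$, but you then apply a naive two-case dominance to $\epsilon_g\lesssim\|\eta_k\|^{1+\alpha}+\epsilon_H\|\eta_k\|$, producing the branch $\|\eta_k\|\gtrsim\epsilon_g/\epsilon_H$. The paper instead reuses the Royer-style quadratic solution already invoked for \cref{eq:s-dec-4}: splitting on whether $\|\eta_k\|^{\alpha}\gtrsim\epsilon_H$, it solves the resulting inequality explicitly and obtains $\|\eta_k\|\gtrsim\min\{\epsilon_g^{2/(1+\alpha)}\epsilon_H^{-1/\alpha},\ \epsilon_H^{1/\alpha}\}$, which raised to the $(2+\alpha)$th power gives exactly the two-term max in \cref{eq:obj-dec-ar-new}, hence exactly the displayed iteration bound. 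Your dominance bound does \emph{imply} the stated complexity (one can check both your terms are dominated by the paper's max for all $\epsilon_g,\epsilon_H\le 1$, and they coincide at the balanced choice), but it does not literally ``yield'' the displayed form — it yields a compatible bound whose first term, $(\epsilon_H/\epsilon_g)^{2+\alpha}$, agrees with $\epsilon_g^{-2(2+\alpha)/(1+\alpha)}\epsilon_H^{(2+\alpha)/\alpha}$ only when $\alpha=1$. You should also record, as the paper does, that \cref{prop:s}, \cref{prop:g}, \cref{lem:g-bound}, and \cref{lem:cdec} survive the Hessian shift with $\beta_H$ replaced by $\beta_H+2\epsilon_H$, and that $\bar{\sigma}$ collapses to a constant once $\omega=\alpha$, so the $\log(\bar\sigma/\ubar{\sigma})$ contribution to \cref{cor:K} is suppressed; these are the pieces that make the transfer from \cref{cor:ar} to \cref{alg:ar-sub} legitimate.
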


We thoroughly examine in \cref{sec:valid-aroc} the applicability of \cref{lem:sigmabar,lem:obj-dec-ar}, and thus \cref{cor:ar}, when employing \cref{alg:ar-sub}, making this operation complexity valid.

\subsection{Operation Complexity of RTR} \label{sec:tr-oc}

Similarly, we adopt the parameter profile $\alpha \coloneqq \mu = \nu = \theta$ in this subsection.
For RTR, we substitute Lines \ref{line:tr_sub_beg} to \ref{line:tr_sub_end} of \cref{alg:tr} with \cref{alg:tr-sub}.
Additionally, to ensure $\bar{H}_k \succeq \epsilon_{H}I$ indicates $H_k \succeq -\epsilon_{H}I$, we increase the regularization strength by setting the regularized Hessian as $\bar{H}_k = H_k + 2\epsilon_{H}I$, which aligns with its usage in \cref{alg:ar-sub}.
This modification deprecates termination criterion \cref{eq:tc-2}.
Based on \cref{prop:krylov-tr} of \cref{prop:krylov} applied to the new regularized Hessian $\bar{H}_k$, we limit the number of subproblem iterations to $K_{\mathrm{sub}} = \widetilde{O}(\epsilon_{H}^{-1 /2})$, which is sufficient to meet {the residual termination criterion \cref{eq:tc-1} if $\bar{H}_k \succeq \epsilon_{H}I$ and $g_k\ne 0$.
	}

	{
		\begin{proposition}\label{prop:tr-sub}
			If $H_k \succeq -\epsilon_{H}I$ and $\|g_k\| > \epsilon_{g}$, then a Krylov subspace method will find a solution satisfying \cref{eq:tc-1} within at most ${C_{\mathrm{sub}}^{\mathrm{tr}} }\epsilon_{H}^{-1 /2}$ iterations, where
			\begin{align*}
				C_{\mathrm{sub}}^{\mathrm{tr}} = &
				\frac{\sqrt{\beta_{H}}}{4} \left(1 + \log \left( 36 \left(1 + \frac{\beta_{H}}{2} + \frac{\bar{\sigma}}{1+\alpha} + \bar{\Delta} + \frac{\beta_{H}+2\epsilon_{H}}{2}\bar{\Delta}^{2}\right)\left(1 \vee \beta_{g} \vee \left(\frac{3(\beta_{H}+1)}{\ubar{\sigma}}\right)^{1 /\alpha} \right)^{2+\alpha} \right)\right) \\
				                                 & \cdot \log\left(\frac{\epsilon_{H}\epsilon_{g}^{2+2\theta_1} }{2(\beta_{H}+2\epsilon_{H})^{4+2\theta_1}}\right)^{-1}
				.\end{align*}
		\end{proposition}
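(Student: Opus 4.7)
The plan is to reduce the statement to the convergence rate in \cref{prop:krylov-tr} of \cref{prop:krylov} by exploiting the strong convexity induced by the extra regularization. Under the hypothesis $H_k \succeq -\epsilon_{H}I$, the modified Hessian $\bar{H}_k = H_k + 2\epsilon_{H}I$ satisfies $\bar{H}_k \succeq \epsilon_{H}I$, so the regularized trust region subproblem becomes a strongly convex quadratic with modulus at least $\epsilon_H$ and smoothness at most $\|\bar{H}_k\|_{\mathrm{op}} \le \beta_H + 2\epsilon_H$. Because $\|g_k\| > \epsilon_g$, its unconstrained minimizer $\eta^{*} = -\bar{H}_k^{-1}g_k$ satisfies $\|\eta^{*}\| \ge \|g_k\|/(\beta_H + 2\epsilon_H) \ge \epsilon_g/(\beta_H+2\epsilon_H)$. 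Applying \cref{prop:krylov-tr} of \cref{prop:krylov} then furnishes an order-$j$ Krylov iterate $\xi_j$ with $\bar{m}(\xi_j)-\bar{m}(\eta^{*}) \le \epsilon$ after at most $C_{\mathrm{sub},2}\lambda_{\min}(\bar{H}_k)^{-1 /2}\log(1 /\epsilon) \le C_{\mathrm{sub},2}\epsilon_H^{-1 /2}\log(1 /\epsilon)$ iterations.

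The next step is to translate objective-value suboptimality into the gradient bound required by \cref{eq:tc-1}. Strong convexity yields $\|\xi_j - \eta^{*}\|^{2} \le 2\epsilon/\epsilon_H$, and the smoothness bound gives $\|\grad \bar{m}(\xi_j)\| = \|\bar{H}_k(\xi_j - \eta^{*})\| \le (\beta_H + 2\epsilon_H)\sqrt{2\epsilon/\epsilon_H}$. Choosing $\epsilon$ small enough that $\|\xi_j - \eta^{*}\| \le \|\eta^{*}\|/2$ also ensures $\|\xi_j\| \ge \epsilon_g/(2(\beta_H + 2\epsilon_H))$. Under these two controls, \cref{eq:tc-1} is implied by the single inequality $(\beta_H + 2\epsilon_H)\sqrt{2\epsilon/\epsilon_H} \le (\epsilon_g/(2(\beta_H + 2\epsilon_H)))^{1+\theta_1}$, which rearranges to a target of the form $\epsilon \lesssim \epsilon_H\,\epsilon_g^{2+2\theta_1}/(\beta_H+2\epsilon_H)^{4+2\theta_1}$. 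Substituting this $\epsilon$ back into the Krylov iteration count produces precisely the logarithmic factor that appears in $C_{\mathrm{sub}}^{\mathrm{tr}}$; matching the remaining prefactor is a direct inheritance of the explicit $C_{\mathrm{sub},2}$ from \cref{prop:krylov}.

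The main subtlety will be justifying that the trust region constraint does not invalidate the interior analysis. Since $\bar{H}_k \succ 0$, the unconstrained minimizer $\eta^{*}$ is well-defined and the Krylov iterates are monotonically attracted to it; if $\|\eta^{*}\| \le \Delta_k$, they remain in the interior and the estimates above apply directly. If instead $\|\eta^{*}\| > \Delta_k$, the iterates eventually reach the boundary and the subproblem exits via \cref{eq:tc-t} no later than the bound in the statement, so the outer algorithm is still served within the claimed iteration count; the worst case for triggering \cref{eq:tc-1} is therefore the interior regime that we explicitly bound. The remaining work is routine bookkeeping: carrying through the explicit constant $C_{\mathrm{sub},2}$ and simplifying the resulting $\log(1/\epsilon)$ factor into the form quoted in $C_{\mathrm{sub}}^{\mathrm{tr}}$.
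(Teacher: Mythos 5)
Your proposal follows essentially the same route as the paper: exploit $\bar H_k\succeq\epsilon_H I$ to combine the strong-convexity bound $\bar m(\eta)-\bar m(\eta^*)\ge\frac{\epsilon_H}{2}\|\eta-\eta^*\|^2$ with $\|\grad\bar m(\eta)\|\le(\beta_H+2\epsilon_H)\|\eta-\eta^*\|$, reduce \cref{eq:tc-1} to a suboptimality target $\bar m(\eta)-\bar m(\eta^*)\lesssim\epsilon_H\epsilon_g^{2+2\theta_1}/(\beta_H+2\epsilon_H)^{4+2\theta_1}$, and then invoke \cref{prop:krylov-tr}. The one substantive deviation is how you lower-bound the step norm needed on the right-hand side of \cref{eq:tc-1}: you argue via $\|\eta^*\|\ge\epsilon_g/(\beta_H+2\epsilon_H)$ plus the requirement $\|\xi_j-\eta^*\|\le\|\eta^*\|/2$, whereas the paper directly appeals to \cref{lem:g-bound}, which guarantees $\|\eta_k\|\ge\|g_k\|/(\beta_H+2\epsilon_H)$ for every Krylov iterate through the Cauchy-point argument; the paper's route needs no auxiliary constraint on $\epsilon$ and avoids the factor of $2$ you pick up in the step-size bound (and hence the extra $2^{2+2\theta_1}$ inside the log, so your constant does not literally reproduce $C_{\mathrm{sub}}^{\mathrm{tr}}$, though the asymptotic form is the same). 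Your handling of the boundary case via \cref{eq:tc-t} is consistent with the algorithm design and matches what the paper leaves implicit.
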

	}

	{
		\cref{prop:tr-sub} is proved
		in \cref{sec:prop-tr-sub}.
		However, when $g_k = 0$, failure to meet
		\cref{eq:tc-1} within $K_{\mathrm{sub}}$ iterations does not necessarily imply that $H_k \not\succeq -\epsilon_{H}I$.
		Similarly, when $\|g_k\|=0$, we cannot claim that $H_k \succeq -\epsilon_{H}I$ when \cref{eq:tc-1} is met within $K_{\mathrm{sub}}$ iterations.
		Therefore, we directly call an MEO to test the second-order stationarity when $\|g_k\|\le \epsilon_{g}$ (Lines~\ref{line:tr-g-beg} to \ref{line:tr-g-end}).
		When making use of the return of an MEO, we set $\eta_k = \Delta_{k}\eta^{\mathrm{E}}$ to satisfy \cref{eq:tc-t} and thus replace \cref{eq:tc-2}.
	}

	{If $\|g_k\| > \epsilon_{g}$,}
we first try solving the subproblem using a Lanczos-based Krylov subspace method {with $K_{\mathrm{sub}} = \lceil C_{\mathrm{sub}}^{\mathrm{tr}} \epsilon_{H}^{-1 /2}\rceil$} (Lines \ref{line:tr-lk-beg} to \ref{line:tr-lk-end}).
	{Note that here we can use a regular Krylov subspace basis instead of a perturbed one, as \cref{prop:tr-sub} applies to all cases.}
	{If \texttt{max\_flag = false}, we proceed with the returned subproblem solution $\eta_k = \xi_j$ (\cref{line:tr-return-cont}).}
If \texttt{max\_flag} remains \texttt{true} after $K_{\mathrm{sub}}$ iterations, we know that $H_k\not\succeq -\epsilon_{H}I$, and we use an MEO to find {a suitable iteration step corresponding to} its minimal eigenvalue (Lines \ref{line:tr-max-beg} to \ref{line:tr-max-end}).

\begin{algorithm}[!th]
	\SetKw{Break}{break for}
	\caption{Subproblem solver for RTR} \label{alg:tr-sub}

	{
		\Input{Tangent space $\tm{x}$, $g_k$, $H_k$, $\Delta_k$, tolorance $\epsilon_{g},\epsilon_{H}\in (0,1)$, interation constant $C_{\mathrm{sub}}^{\mathrm{tr}}$, MEO}
		\Output{$\eta_k$ or $x_k$}
	}
	set \texttt{max\_flag = true}\;
	\If {$\|g_k\|\le \epsilon_{g}$} { \label{line:tr-g-beg}
		call an MEO to test the second-order stationarity\;
		\uIf {the MEO indicates that $H_k \succeq - \epsilon_{H}I$} {
			\Return $x_k$ and terminate the outer algorithm
		} \Else {
			\Return $\eta_k = \Delta_k \eta^{\mathrm{E}}$, where $\eta^{\mathrm{E}}$ is obtained through the MEO \label{line:tr-g-end}

		}
	}
	\For {$j = 1, ..., K_{\mathrm{sub}}{=\lceil C_{\mathrm{sub}}^{\mathrm{tr}}\epsilon_{H}^{-1 /2}\rceil}$} { \label{line:tr-lk-beg}
		get order-$j$ Krylov subspace solution $\xi_{j}$ with the regularized Hessian $\bar{H}_k$\;
		\If {$\xi_{j}$ satisfies (\ref{eq:tc-1}\tcor\hyperref[eq:tc-t]{T})} {
			set \texttt{max\_flag = false}\;
			\Return $\eta_k = \xi_j$ \label{line:tr-return-cont} \label{line:tr-res-end}
		}
	} \label{line:tr-lk-end}
	\tcp*[h]{\texttt{max\_flag = true}} \;
	call an MEO to return $\eta^{\mathrm{E}}$\label{line:tr-max-beg}\;
	\Return $\eta_k = \Delta_k \eta^{\mathrm{E}}$ \label{line:tr-max-end}
\end{algorithm}

We remark that in the subproblem solver for RAR (\cref{alg:ar-sub}), we exclusively employ an MEO to assess second-order stationarity and do not rely on it to provide an iteration step. This is because for the RAR subproblem, a Krylov subspace method performs well (efficiently finds a solution that offers sufficient model decrease) when $H_k \not\succeq -\epsilon_{H}I$ (see \cref{prop:tcd}).
However, Krylov subspace methods for the RTR subproblem do not enjoy this nice property when $H_k \not\succeq -\epsilon_{H}I$, especially in \textit{hard cases} (see \cite[Chapter 4]{nocedal2006Numericaloptimization} or \cite[Chapter 7]{conn2000Trustregion}).
Therefore, we also incorporate an MEO into our subproblem solver for RTR to determine the iteration steps.
This approach is characterized by its simplicity and efficiency (recall that a Krylov subspace method is an MEO), aligning with established practices in the field \citep{curtis2021TrustRegionNewtonCG,xu2020Newtontypemethods}.

By \cref{cor:tr} and the operation complexity of Lanczos-based Krylov subspace methods and MEO, we obtain an operation complexity guarantee of RTR.
\begin{corollary}[Operation complexity of RTR]\label{cor:tr-oc}
	\cref{alg:tr,alg:tr-sub} finds an $(\epsilon_{g},\epsilon_{H})$-approximate second-order stationary point with the following worst-case operation complexity:
	$$
		\widetilde{O}\left(\max\left\{
		\epsilon_{H}^{-1-\tfrac{2}{\alpha}},
		\epsilon_{g}^{-\tfrac{4}{1+\alpha}}\epsilon_{H}^{-1 + \tfrac{2}{\alpha}} \right\}\cdot \epsilon_{H}^{-1 /2}\right).
	$$
	where $\widetilde{O}$ suppresses the logarithmic dependency on $\epsilon_{H}$.
	Let $\epsilon_{H} = \epsilon_{g}^{\alpha /(1+\alpha)}$, the complexity becomes
	$$
		\widetilde{O}\left(\epsilon_{g}^{- \tfrac{4+3\alpha}{2(1+\alpha)}} \right),
	$$
	which further becomes $\widetilde{O}(\epsilon_{g}^{-7/4})$ when $\alpha = 1$.
\end{corollary}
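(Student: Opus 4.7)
The plan is to multiply the iteration-complexity bound from \cref{cor:tr} by the per-iteration Hessian-vector product cost incurred by \cref{alg:tr-sub}. First, I would verify that the iteration count established in \cref{cor:tr} remains valid under the concrete subproblem solver of \cref{alg:tr-sub}. Three things must be checked: \emph{(i)} \cref{lem:g-bound} continues to hold for the Krylov subspace solver applied to the regularized Hessian $\bar H_k = H_k + 2\epsilon_H I$ whenever the solution is returned from the interior of the trust region (by \cref{prop:krylov} this is routine); \emph{(ii)} \cref{lem:deltamin} remains valid, which is immediate since regularizing the Hessian only strengthens the quadratic part of $\bar m^{\mathrm{tr}}$ and does not alter the model--actual discrepancy estimates; \emph{(iii)} \cref{lem:obj-dec-tr} remains valid, including the case where the iterate is produced by the MEO at \cref{line:tr-max-end} with $\eta_k=\Delta_k\eta^{\mathrm{E}}$, which furnishes precisely the $\epsilon_H \Delta_k^2$-type decrease needed for $k\in\mathcal{B}\cap \mathcal{S}$ via \cref{eq:tc-e} and the Cauchy-style inequality \cref{eq:s-dec-0}. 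Together these ensure the bound $O(\max\{\epsilon_H^{-1-2/\alpha},\epsilon_g^{-4/(1+\alpha)}\epsilon_H^{-1+2/\alpha}\})$ on $|\mathcal{K}|$ is inherited from \cref{cor:tr}.

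Next I would quantify the per-iteration cost. Each outer iteration performs at most one MEO call and at most $K_{\mathrm{sub}} = \lceil C_{\mathrm{sub}}^{\mathrm{tr}} \epsilon_H^{-1/2}\rceil$ Krylov steps. The Krylov cost is controlled by \cref{prop:tr-sub} (which invokes \cref{prop:krylov}\ref{prop:krylov-tr} applied to $\bar H_k \succeq \epsilon_H I$) and the MEO cost is $O(\min\{n,\epsilon_H^{-1/2}\}\log(1/\delta))$ as recalled in \cref{sec:meo}. Thus the total per-iteration Hessian-vector product count is $\widetilde O(\epsilon_H^{-1/2})$, where $\widetilde O$ hides logarithmic factors in $\epsilon_H$, $\epsilon_g$, $n$, and $1/\delta$.

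Multiplying the outer iteration complexity by the per-iteration Hessian-vector product cost yields
\[
\widetilde O\!\left(\max\Bigl\{\epsilon_H^{-1-\tfrac{2}{\alpha}},\ \epsilon_g^{-\tfrac{4}{1+\alpha}}\epsilon_H^{-1+\tfrac{2}{\alpha}}\Bigr\}\cdot \epsilon_H^{-1/2}\right),
\]
which is the first claim. Specializing to $\epsilon_H=\epsilon_g^{\alpha/(1+\alpha)}$, both entries in the max equalize to $\epsilon_g^{-(2+\alpha)/(1+\alpha)}$, so multiplication by $\epsilon_H^{-1/2}=\epsilon_g^{-\alpha/(2(1+\alpha))}$ gives $\widetilde O(\epsilon_g^{-(4+3\alpha)/(2(1+\alpha))})$; setting $\alpha=1$ then gives $\widetilde O(\epsilon_g^{-7/4})$.

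The main obstacle is the verification step \emph{(iii)} above: one must carefully show that replacing the exact certificate \cref{eq:tc-2} by either the $K_{\mathrm{sub}}$-iteration Krylov check on $\bar H_k$ or the MEO branch does not degrade the minimal successful decrease used in \cref{lem:obj-dec-tr}, and that the accounting of successful versus unsuccessful iterations in \cref{lem:K,lem:S} still applies verbatim with the same $\ubar{\Delta}$ of \cref{lem:deltamin}. Once this bookkeeping is in place, the final bound follows by simple arithmetic as described above.
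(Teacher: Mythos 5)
Your proposal is correct and follows the same route as the paper: multiply the iteration-complexity bound from \cref{cor:tr} by the per-iteration Hessian--vector product cost $\widetilde O(\epsilon_H^{-1/2})$ arising from \cref{prop:tr-sub} and the MEO, after verifying that \cref{lem:deltamin,lem:obj-dec-tr} persist under \cref{alg:tr-sub} --- precisely the bookkeeping the paper carries out in \cref{sec:valid-troc}. Two points in the sketch deserve tightening. In step \emph{(ii)}, persistence of \cref{lem:deltamin} is \emph{not} immediate for iterates returned by the MEO, since the Cauchy criterion \cref{eq:tc-c} that underlies Case~II of that lemma's proof is no longer enforced on $\eta_k=\Delta_k\eta^{\mathrm{E}}$; the paper replaces it with \cref{eq:tc-e}, which gives $m_{x_k}(0)-m_{x_k}(\eta_k)\ge\tfrac14\epsilon_H\Delta_k^2$ directly and then recovers $\Delta_k\gtrsim\epsilon_H^{1/\alpha}$ (one must also account for $\bar H_k=H_k+2\epsilon_HI$ changing the constant in the gradient bound to $\beta_H+2\epsilon_H$). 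For the same reason, in step \emph{(iii)} the minimal decrease on an MEO step should be derived from \cref{eq:tc-e} rather than from \cref{eq:s-dec-0}, which is itself a consequence of \cref{eq:tc-c}; the numerical conclusion matches, but the chain of justification in your sketch is slightly misattributed. With those corrections the final arithmetic --- equalizing the two entries in the max at $\epsilon_H=\epsilon_g^{\alpha/(1+\alpha)}$ and multiplying by $\epsilon_H^{-1/2}$ --- is exactly as in the paper.
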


We thoroughly examine in \cref{sec:valid-troc} the applicability of \cref{lem:deltamin,lem:obj-dec-tr}, and thus \cref{cor:tr}, when employing \cref{alg:tr-sub}, making this operation complexity valid.

\begin{remark}[TCG as RTR's subproblem sovler]
	Truncated conjugate gradient (TCG) methods \citep{steihaugConjugateGradientMethod1983,toint1981efficientsparsity} constitute another practical choice for the subproblem solver in RTR \citep{zhang2023RiemannianTrust,absilTrustRegionMethodsRiemannian2007}.
	For trust region methods designed for $C^{2}$ problems in Euclidean spaces with TCG as the subproblem solver, \citet{curtis2021TrustRegionNewtonCG} provides an iteration complexity of $\widetilde{O}(\epsilon_{g}^{-3 /2})$ and an operation complexity of $\widetilde{O}(\epsilon_{g}^{-7 /4})$.
	Remarkably, with minimal adjustments, our analysis seamlessly extends to the context of RTR employing TCG as the subproblem solver, yielding an iteration complexity given in \cref{cor:tr} and an operation complexity given in \cref{cor:tr-oc}.
	This adaptability arises due to the commensurate operation complexity of TCG and Lanczos-based Krylov subspace methods. Additionally, when TCG terminates within the trust region ($\|\eta_k\|<\Delta_k$), it equates to a Krylov subspace method (with a $H_k$-conjugate basis). When $\|\eta_k\| = \Delta_k$, we only necessitate the Cauchy condition \cref{eq:tc-c}, a criterion also satisfied by TCG.
	Thus, our analysis aptly extends to encompass TCG as well.
\end{remark}

We conclude with some additional insights into the unified view of the subproblem procedures for RAR and RTR.
When \texttt{max\_flag = true}, indicating that the subproblem solver falls short of meeting the termination criteria within $\widetilde{O}(\epsilon_{H}^{-1 /2})$ iterations, both methods gain the knowledge about the approximate positive definiteness of $H_k$: RAR can deduce that $H_k \succeq -\epsilon_{H}I$, while RTR can assert that $H_k \not\succeq -\epsilon_{H}I$. However, despite this information, obtaining a suitable iteration step remains elusive at this point for both methods. Consequently, an additional process is required by both RAR and RTR to determine $\eta_k$: RAR employs a regularization on the Hessian, and RTR can leverage an MEO.
Conversely, when \texttt{max\_flag = false} and $\|g_k\|\le \epsilon_{g}$, both procedures still lack information regarding the approximate positive definiteness of $H_k$; thus, both depend on an MEO to test the second-order stationarity. However, since RAR can find a suitable iteration point satisfying the second-order termination criterion \cref{eq:tc-2} or \cref{eq:tc-d}, it does not rely on the MEO to return $\eta_k$.
{Nonetheless, we can develop a scheme similar to \cref{alg:tr-sub} for RAR, which utilizes an MEO to return $\eta_k$ (see, e.g., \cite[Lemma 7]{xu2020Newtontypemethods}). We remark that an MEO in \cref{sec:meo} enjoys the same operation complexity as a Krylov subspace method with a perturbed subspace basis.}

\section{Computational Experiments} \label{sec:exp}

We implement the proposed modified RAR and RTR algorithms, and compare the performance of their \textit{$\mu$-aware} and \textit{$\mu$-agnostic} variants.
Recall that $\mu$ is the \holder continuity order of the objective function's Hessian.
For $\mu$-aware methods, we set the subproblem inexactness parameter $\theta = \mu$, while for $\mu$-agnostic methods, we set $\theta = 1$.
Additionally, for $\mu$-aware RAR, we set the regularization order $\omega = \mu$; for $\mu$-agnostic RAR (i.e., Riemannian ARC), we set $\omega = 1$.
Our implmentation is built upon the \texttt{manopt} package,\footnote{\href{https://www.manopt.org}{https://www.manopt.org}} which includes the Riemannian trust-region and ARC framework, as well as Lanczos-based Krylov subspace methods and MEO needed in our subproblem solvers (\cref{alg:ar-sub,alg:tr-sub}).

We evaluate our methods on a quadratic minimization problem with general-order distance regularization on the sphere $\mathbb{S}^{n}$:
\[
	\min_{p \in \mathbb{S}^{n}} \quad f(p) = f_{1}(p) + \dist(p,b)^{2+\mu}
	,\]
where $f_1(p) = c\left< p,Ap \right>_{E}$ is smooth but not necessarily convex, and $\dist(p,b)^{2+\mu}\in C^{2,\mu}$.
This problem is a variant of constrained quadratic maximization problems commonly encountered in applications such as game intervention theory \citep{parise2023GraphonGames,kor2023Multiactivityinfluencea}.
Further details on the connection to these applications and the smoothness properties of the objective can be found in \cref{apx:exp}.

\cref{fig:illus} presents an example where different regularization orders lead to different convergence points $\hat{p}$ solved by RTR. The colorbar indicates the function value of $f_1$ over the sphere $\mathbb{S}^{2}$. We annotate the convergence points with their objective values $(f_1(\hat{p}), \dist(\hat{p},b)^{2+\mu})$ and refer to $b$ as the base point. With weaker regularization ($\mu = 0.8$), the solution stays near the region with a lower $f_1$ value, whereas stronger regularization ($\mu = 0.2$) pulls the solution closer to the base point $b$. This concrete example illustrates how the regularization order---and thus the smoothness of the objective---can significantly influence the solution.

\begin{figure}[th]
	\centering
	\includegraphics[width=0.55\textwidth]{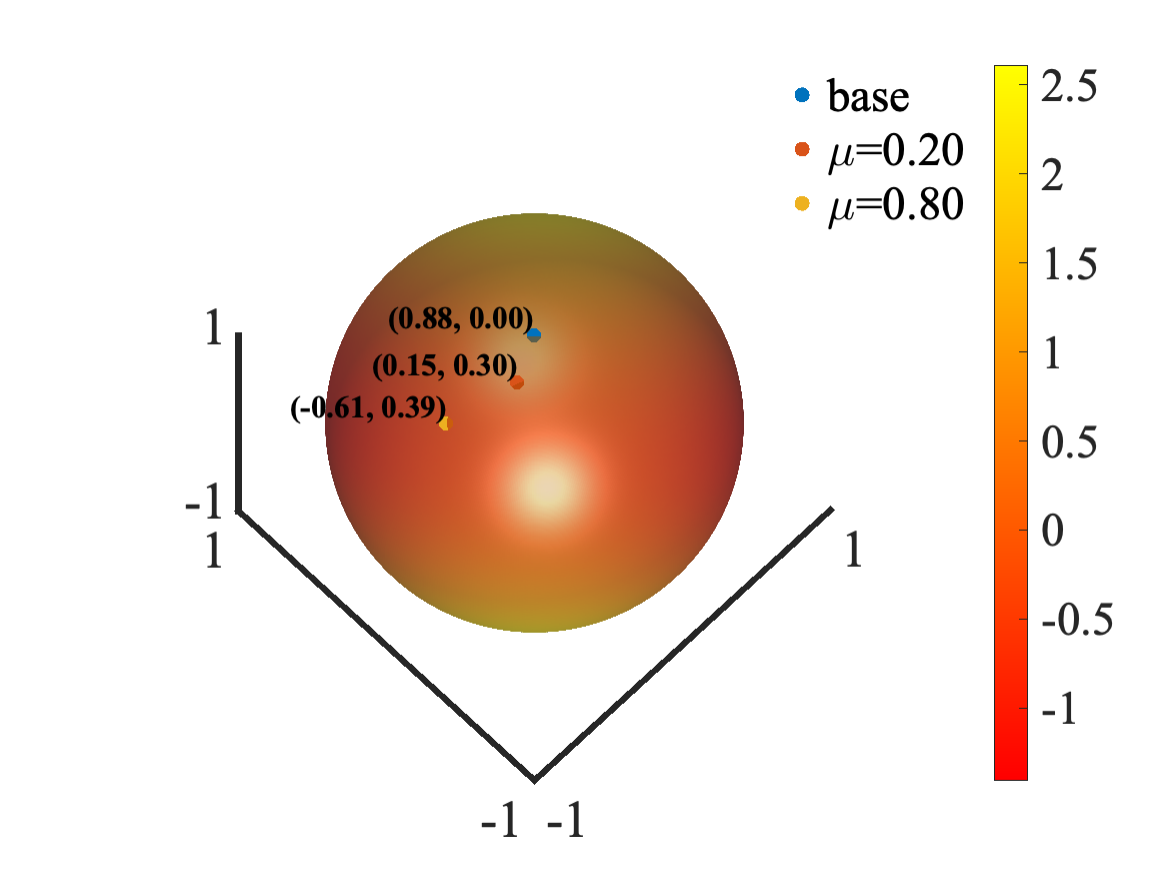}
	\caption{~Illustration of solutions for different regularization orders. In this example, $n=2$, $c=1$, $A = Z+Z^{T}$ with $Z \sim \mathcal{N}_{3\times 3}(0,1)$, and $b = (1,0,0)$ (labeled as the base point). The colorbar indicates the function value of $f_1$ on the sphere $\mathbb{S}^{2}$. In the text label of each point, the first component is the function value of $f_1$ and the second component is the distance regularization.}
	\label{fig:illus}
\end{figure}

In our experiments, we set $n=10^3$, $c=1$, $A=Z+Z^{T}$ where $Z \sim \mathcal{N}_{(n+1)\times (n+1)}(0,1)$, and $b = (1,0,\ldots,0)^{T}$.
Results are reported with a 95\% confidence region, obtained over 10 independent runs with randomly generated problem instances and initial points.
All other parameters follow the default settings in \texttt{manopt}, shared across all methods.
The stopping criteria target a second-order stationary point with tolorances $\epsilon_{g} = \|\grad f(x_0)\|_2\cdot 10^{-14}$ and $\epsilon_{H} = \epsilon_{g}^{\frac{1}{2}}$.
Since the synthetic problem is relatively easy, all methods have a similar iteration complexity under 100 across all problem instances.
Therefore, we focus on comparing performance in terms of operation complexity (number of Hessian-vector products) and runtime.
We also observe that in most instances, the MEO in our subproblem solvers is invoked only once at the end to verify second-order stationarity.
These observations are consistent with the findings in \cite{curtis2021TrustRegionNewtonCG}.

\cref{fig:rar,fig:rtr} report the performance improvement of $\mu$-aware versions of RAR and RTR over their $\mu$-agnostic counterparts.
For the convergence performance comparison, we plot the results for $\mu=0.4$. The results for other values of $\mu$ are similar and consistent with the results of operation complexity improvement.
Please see also the full results reported in \cref{tab:comp,tab:conv}.
We observe universal improvements from incorporating $\mu$-awareness across all settings.
For both RAR and RTR, the improvments are more significant when $\mu\approx 0.4$.
Overall, RAR enjoys greater improvements than RTR, potentially due to its further $\mu$-awareness in the regularization order $\omega=\mu$, in addition to the inexactness parameter $\theta=\mu$.

As $\mu$ increases beyond 0.4, the improvement from $\mu$-awareness diminishes monotonically, with no improvement at $\mu = 1$. This aligns with both our theoretical analysis of operation complexity and the intuition that as the Hessian approaches Lipschitz continuous, the distinction between $\mu$-aware and $\mu$-agnostic methods vanishes.
On the other hand, for $\mu < 0.4$, the improvement is less pronounced, potentially because the objective function lacks sufficient smoothness for higher-order methods to be effective. This low-$\mu$ regime may require further tailored algorithms to realize more substantial performance gains.

\begin{figure}[th]
	\centering
	\begin{subfigure}{0.45\textwidth}
		\centering
		\includegraphics[width=0.8\textwidth]{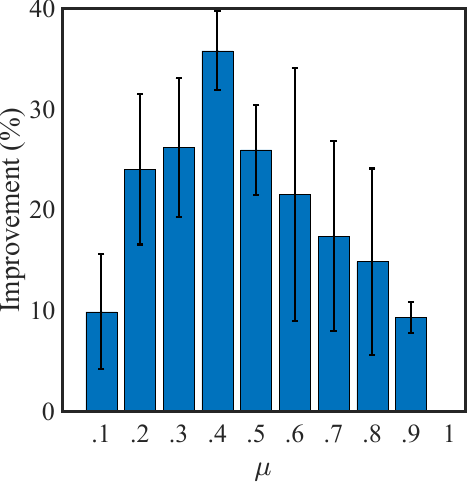}
		\caption{Operation complexity Improvement of $\mu$-aware RAR.} \label{fig:rarcomp}
	\end{subfigure}
	\begin{subfigure}{0.45\textwidth}
		\centering
		\includegraphics[width=0.86\textwidth]{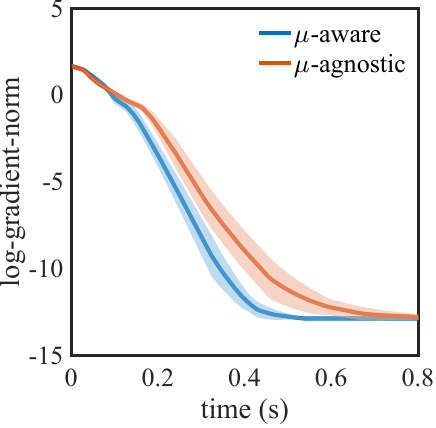}
		\vspace{-3.5pt}
		\caption{Convergence comparison for $\mu=0.4$.} \label{fig:rarconv}
	\end{subfigure}
	\caption{~Comparison of $\mu$-aware and $\mu$-agnostic RAR.}\label{fig:rar}
\end{figure}

\begin{figure}[th]
	\centering
	\begin{subfigure}{0.45\textwidth}
		\centering
		\includegraphics[width=0.8\textwidth]{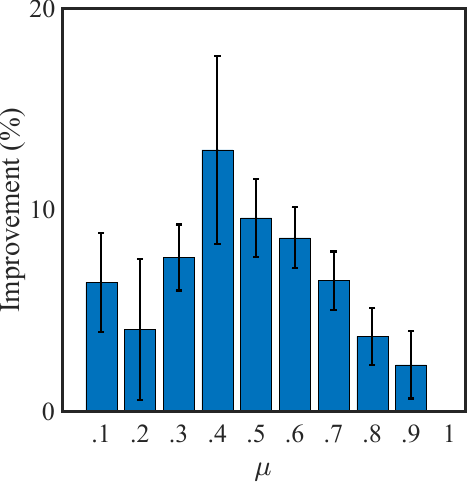}
		\caption{Operation complexity improvement of $\mu$-aware RTR.} \label{fig:rtrcomp}
	\end{subfigure}
	\begin{subfigure}{0.45\textwidth}
		\centering
		\includegraphics[width=0.86\textwidth]{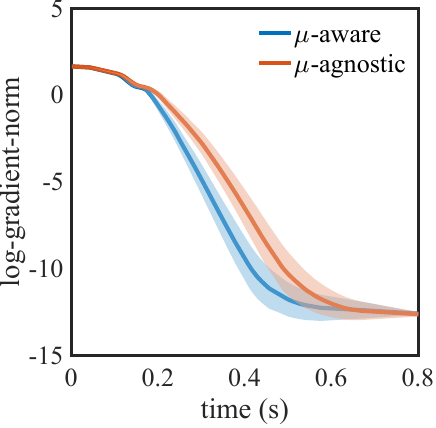}
		\vspace{-3.5pt}
		\caption{Convergence comparison for $\mu=0.4$.} \label{fig:rtrconv}
	\end{subfigure}
	\caption{~Comparison of $\mu$-aware and $\mu$-agnostic RTR.}\label{fig:rtr}
\end{figure}

\begin{table}[th]
	\caption{~Average operation complexity comparison.}
	\label{tab:comp}
	\begin{tabular}{@{}ccrlllllllll@{}}\toprule
		\textbf{Method}                          & \textbf{$\mu$-awareness} & \textbf{$\mu\!=$0.1} & \textbf{0.2} & \textbf{0.3} & \textbf{0.4} & \textbf{0.5} & \textbf{0.6} & \textbf{0.7} & \textbf{0.8} & \textbf{0.9} & \textbf{1.0} \\ \midrule
		\multicolumn{1}{c}{\multirow{2}{*}{RAR}} & \cmark                   & 361                  & 345          & 294          & 268          & 321          & 350          & 321          & 318          & 364          & 400          \\
		\multicolumn{1}{c}{}                     & \xmark                   & 401                  & 462          & 404          & 421          & 438          & 467          & 383          & 375          & 401          & 400          \\
		\multicolumn{2}{c}{improvement (\%)}     & 9.8                      & 24.1                 & 26.2         & 35.8         & 26.0         & 21.6         & 17.4         & 14.9         & 9.3          & 0                           \\ \midrule
		\multicolumn{1}{c}{\multirow{2}{*}{RTR}} & \cmark                   & 278                  & 298          & 302          & 293          & 279          & 293          & 293          & 323          & 296          & 323          \\
		\multicolumn{1}{c}{}                     & \xmark                   & 297                  & 309          & 327          & 341          & 309          & 321          & 313          & 336          & 303          & 323          \\
		\multicolumn{2}{c}{improvement (\%)}     & 6.4                      & 4.1                  & 7.6          & 13.0         & 9.6          & 8.6          & 6.5          & 3.7          & 2.3          & 0                           \\\bottomrule
	\end{tabular}
\end{table}

\begin{table}[th]
	\caption{~Average runtime comparison.}
	\label{tab:conv}
	\begin{tabular}{@{}ccrlllllllll@{}}\toprule
		\textbf{Method}                          & \textbf{$\mu$-awareness} & \textbf{$\mu\!=$0.1} & \textbf{0.2} & \textbf{0.3} & \textbf{0.4} & \textbf{0.5} & \textbf{0.6} & \textbf{0.7} & \textbf{0.8} & \textbf{0.9} & \textbf{1.0} \\ \midrule
		\multicolumn{1}{c}{\multirow{2}{*}{RAR}} & \cmark                   & .6610                & .6204        & .5645        & .4637        & .5472        & .8731        & .8205        & .7363        & .6811        & .6959        \\
		\multicolumn{1}{c}{}                     & \xmark                   & .6892                & .8446        & .7405        & .7062        & .7246        & 1.2308       & .9377        & .7685        & .7543        & .7138        \\
		\multicolumn{2}{c}{improvement (\%)}     & 4.1                      & 26.6                 & 23.8         & 34.3         & 24.5         & 29.1         & 12.5         & 4.2          & 9.7          & 2.5                         \\ \midrule
		\multicolumn{1}{c}{\multirow{2}{*}{RTR}} & \cmark                   & .4717                & .5113        & .4880        & .4766        & .4776        & .4681        & .4673        & .6035        & .4999        & .5245        \\
		\multicolumn{1}{c}{}                     & \xmark                   & .5203                & .5316        & .5458        & .5586        & .5045        & .5111        & .5276        & .6596        & .5215        & .5059        \\
		\multicolumn{2}{c}{improvement (\%)}     & 9.3                      & 3.8                  & 10.6         & 14.7         & 5.3          & 8.4          & 11.4         & 8.5          & 4.1          & -3.7                        \\ \bottomrule
	\end{tabular}
\end{table}

\ifSubfilesClassLoaded{\bibliography{ic}}{}

\bibliography{ic.bib}

\renewcommand \thepart{}
\renewcommand \partname{}

\appendix
\part*{Appendix}
\renewcommand{\thesection}{\Alph{section}}
\section{Proof of Propositions} 

\subsection{Proof of Proposition~{\ref{lem:ma-diff}}}
\begin{proof}
	The first inequality is given by \citet[Proposition 3]{zhang2023RiemannianTrust} and the second is by \citet[Corollary 5.1]{zhang2023RiemannianTrust}.
	In this proof, we omit the subscript $x_k$ of $R_{x_k}$ and $\exp _{x_k}$.
	By the Taylor expansion on manifolds {(\cite[Section 4.1]{boumal2023introductionoptimization})}, there exists $\tau\in[0,1]$ such that
	$$
		f(R(\eta_{k})) = f(\exp (\eta_{k})) + \left<\g f(\gamma(\tau)), \gamma'(\tau) \right>,
	$$
	where $\gamma$ is the geodesic from $\exp (\eta_k)$ to $R(\eta_k)$. Then, we have
	\begin{align}\notag
		    & |f(R(\eta_k)) - f(\exp (\eta_k))|                                                                        \\\notag
		=   & \left|\left<\g f(\gamma(\tau)), \gamma'(\tau) \right>\right|                                             \\\label{eq:ma-diff-1}
		=   & \left| \left<P_\gamma^{\tau\to 0}\g f(\gamma(\tau)), \gamma'(0)\right> \right|                           \\\notag
		\le & \|P_\gamma^{\tau\to 0}\g f(\gamma(\tau))\|\|\gamma'(0)\|                                                 \\\label{eq:ma-diff-2}
		=   & \|\g f(x_k) - \g f(x_k) + P_\gamma^{\tau\to 0}\g f(\gamma(\tau))\| \cdot \dist (\exp (\eta_k),R(\eta_k)) \\\label{eq:ma-diff-3}
		\le & \left( \|\g f(x_k)\| + \beta_H \dist(x_k,\gamma(\tau)) \right)\cdot \dist(\exp (\eta_k),R(\eta_k)),
	\end{align}
	where \cref{eq:ma-diff-1} use the fact that the parallel transport $P_{\gamma}^{\tau\to 0}$ preserves inner product and $P_{\gamma}^{\tau\to 0}\gamma'(\tau) = \gamma(0)$ becomes $\gamma'$ is parallel along $\gamma$;
	\cref{eq:ma-diff-2} is by the definition of a geodesic {in \cref{sec:pre}};
	and in \cref{eq:ma-diff-3}, $\beta_H$ is the uniform operator norm bound of $\hess f$, and then $\g f$ is $\beta_H$-Lipschitz continuous.
	Then by the triangle inequality, we have
	\begin{equation}\label{eq:ma-diff-4}
		\dist(x_k,\gamma(\tau))
		\le \dist(x_k,\exp (\eta_k)) + \dist(\exp (\eta_k),\gamma(\tau))
		\le \|\eta_k\| + \dist(\exp (\eta_k), R(\eta_k)).
	\end{equation}
	Combining \eqref{eq:ma-diff-3}, \eqref{eq:ma-diff-4}, and the first inequality of \cref{lem:ma-diff} gives the desired result.
\end{proof}

\subsection{Proof of Proposition~\ref{lem:tc}}
\begin{proof}
	We denote $r_k \coloneqq \grad \bar{m}_{x_k}(\eta_k)$ and $s_k \coloneqq \g\phi(\eta_k;\sigma_k)$.
	Then, $r_k = g_k + H_{k}\eta_k + s_k$.
	For $k\in \mathcal{S}$, we have
	\begin{align}\notag
		\|g_{k+1}\| & = \|g_{k+1} + P_{k,k+1}(r_{k} - g_k - H_{k}\eta_k- s_k)\|                             \\\notag
		            & \le \|g_{k+1} - P_{k,k+1}(g_{k} + H_{k}\eta_k)\| + \|r_{k}\| + \|s_k\|                \\\notag
		            & \le \underbrace{\|g_{k+1} - P_{k,k+1}(g_k\!+\!H_{k}\exp_{x_k}^{-1}(x_{k+1}))\|}_{S_1}
		+\underbrace{\|H_k(\exp _{x_k}^{-1}(x_{k+1}) - \eta_k)\|}_{S_2}
		+\|\eta_k\|^{1+\theta}
		+ \|s_k\|,
	\end{align}
	where we write $P_{k,k+1} \coloneqq P_{x_{k},x_{k+1}}$, which is norm preserving, for notational simplicity and use \cref{eq:tc-1}: $\|r_k\| \le\|\eta_k\|^{1+\theta}$.
	Since $f\in C^{2,\mu}$, by the Taylor expansion of $\g f(x_k)$ (\cite[Section 4.1]{boumal2023introductionoptimization}), we have
	\begin{align}\notag
		S_1 \le & C_{H}\dist(x_k,x_{k+1})^{1+\mu}                                                                            \\\notag
		\le     & C_{H}(\dist(x_k, \exp_{x_k}(\eta_k)) + \dist(\exp _{x_k}(\eta_k), R_{x_k}(\eta_{k})))^{1+\mu}              \\\label{eq:s-dec-2}
		\le     & 2C_{H}\dist(x_k, \exp_{x_k}(\eta_k))^{1+\mu} + 2C_{H}\dist(\exp _{x_k}(\eta_k), R_{x_k}(\eta_{k}))^{1+\mu} \\\label{eq:s-dec-3}
		\le     & 2C_{H}\|\eta_k\|^{1+\mu} + 2C_{H}C_{R}^{1+\mu}\|\eta_k\|^{(1+\mu)(1+\nu)},
	\end{align}
	where \eqref{eq:s-dec-2} is by the convexity of function $z\mapsto z^{1+\mu}$ and \eqref{eq:s-dec-3} is by \cref{lem:ma-diff}, which also gives
	$$
		S_2 \le \beta_H C_{R}\|\eta_k\|^{1+\nu}.
	$$
	Substituting $S_1$ and $S_2$ with the above bounds gives the first result of \cref{lem:tc}.

	For $\eta_k$ satisfying termination criterion \cref{eq:tc-2}, we have
	$$
		\begin{aligned}
			-\lambda_{\min}(H_k) = & \lambda_{\max}(-H_k) = \lambda_{\max}(-\hess \bar{m}(\eta_k) + \hess \phi(\eta_k;\sigma_k)) \\
			\le                    & -\lambda_{\min}(\hess \bar{m}(\eta_k)) + \lambda_{\max}(\hess \phi(\eta_k;\sigma_k))        \\
			\le                    & \|\eta_k\|^{\theta} + \lambda_{\max}(\hess \phi(\eta_k;\sigma_k)),
		\end{aligned}
	$$
	where the first inequality is by \cite[Theorem 10.21]{zhang2011Matrixtheory} and uses the fact that $\hess \bar{m}$ and $\hess \phi$ are Hermitian.
\end{proof}

\subsection{Verifying Assumption~\ref{lem:g-bound}} \label{sec:pf-g-bound}

{We claim that the solution returned by the Krylov subspace methods satisfies \cref{lem:g-bound}}.
Let $\xi_1$ be the minimizer of \cref{eq:model-ar} restricted in the first Krylov subspace $\spa\{g_k\}$.
Then we have $\grad_{\tau} \bar{m}(\tau g_k) = \tau \left<g_k, \grad \bar{m}(\xi_1)\right> = 0$, where $\xi_1 = \tau g_k$. Calculating the gradient of $\bar{m}$ gives
\begin{equation}\label{eq:pf-g-bound-1}
	\left<g_k, g_k + H_k\xi_1 + \grad \phi(\xi_1) \right> = 0,
\end{equation}
which further gives
$$
	\|g_k\|^2 = -\left<g_k,H_k\xi_1 \right> - \left<g_k, \g \phi(\xi_1) \right> \le \|g_k\|\|H_k\|\|\xi_1\| + \|g_k\|\|\g \phi(\xi_1)\|.
$$
Then for RAR, since $\|\grad \phi(\xi)\| = \sigma_k\|\xi\|^{1+\omega}$ increases with the magnitude of $\xi$ and $\|\xi_1\| \le \|\eta_k\|$ \citep[Theorem 1]{cartis2020monotonicestimates}, we have
\begin{equation}\label{eq:pf-g-bound-2}
	\|g_k\| \le \beta_{H}\|\xi_1\| + \|\g \phi(\xi_1)\| \le \beta_{H}\|\eta_k\| + \|\g \phi(\eta_k)\|.
\end{equation}
For RTR, $\|\xi_1\| \le \|\eta_k\|$ also holds
\citep{steihaugConjugateGradientMethod1983,luksan2008Lagrangemultipliers}. Therefore, if $\|\eta_k\|<\Delta_k$, we know $\|\xi_1\| < \Delta_k$, and then \cref{eq:pf-g-bound-1} still holds.
Since $\|\g \phi(\xi)\| {= \epsilon_{H}\|\xi\| /2}$ also increases with the magnitude of $\xi$ for RTR, \cref{eq:pf-g-bound-2} still holds.

\ifSubfilesClassLoaded{\bibliography{ic}}{}

\subsection{Proof of Proposition~\ref{prop:s}} \label{sec:pf-s}

\begin{proof}
	In this proof, we omit the superscript and subscript of $\bar{m}^{\mathrm{ar}}_{x_k}$.
	By the Cauchy termination criterion \cref{eq:tc-c}, we have
	\begin{equation}\label{eq:prop-s-2}
		0 \le \bar{m}(0) - \bar{m}(\eta_k) \le \|g_k\|\|\eta_k\| + \frac{1}{2}\beta_{H}\|\eta_k\|^2 - \frac{\sigma_k}{2+\omega}\|\eta_k\|^{2+\omega}.
	\end{equation}
	If $\|\eta_k\|\ge \|g_k\|$, \cref{eq:prop-s-2} gives
	$$
		\|\eta_k\|^2 \cdot \left( 1+\frac{\beta_{H}}{2} - \frac{\sigma_k}{2+\omega}\|\eta_k\|^{\omega} \right) \ge 0.
	$$
	Therefore, we have
	\begin{equation}\label{eq:prop-s-3}
		\|\eta_k\| \le \left( \frac{(2+\beta_{H})(2+\omega)}{2\sigma_k} \right) ^{1 /\omega}\vee \|g_k\| .
	\end{equation}
	Another decomposition of the right hand side \cref{eq:prop-s-2} gives
	$$
		0 \le \|\eta_k\|\cdot \left( \|g_k\| - \frac{\sigma_k}{2(2+\omega)}\|\eta_k\|^{1+\omega} \right) + \|\eta_k\|^{2}\cdot \left( \frac{\beta_{H}}{2} - \frac{\sigma_k}{2(2+\omega)}\|\eta_k\|^{\omega} \right).
	$$
	Hence the two terms on the right hand side of the above inequality cannot both be negative. This gives
	\begin{equation}\label{eq:prop-s-4}
		\|\eta_k\| \le \left( \frac{2\beta_{H}(2+\omega)}{2\sigma_k} \right) ^{1 /\omega}\vee \left( \frac{2(2+\omega)\|g_k\|}{\sigma_k} \right) ^{1 /(1+\omega)}.
	\end{equation}
	Utilizing $\omega\in (0,1]$ and the fact that $(a \vee b)\w(a\vee c) = a\vee (b \w c)$, \cref{eq:prop-s-3,eq:prop-s-4} give
	$$
		\|\eta_k\| \le \left(\frac{3(\beta_H+1)}{\sigma_k}\right)^{1 /\omega} \vee \left( \|g_k\| \w \left( \frac{6\|g_k\|}{\sigma_k} \right)^{1 /(1+\omega)}  \right).
	$$
\end{proof}

\subsection{Proof of Proposition~\ref{prop:g}} \label{sec:pf-g-bound-uni}

Before proving \cref{prop:g}, we provide a lemma on the lower bound of the model decrease.
\begin{lemma}[Cauchy decrease]\label{lem:cdec}
	If $\eta_k$ satisfies the Cauchy termination criterion \cref{eq:tc-c}, then we have
	$$
		\bar{m}^{\mathrm{ar}}_{x_k}(0) - \bar{m}^{\mathrm{ar}}_{x_k}(\eta_k) \ge \frac{\|g_k\|^2}{4\left( \beta_{H}\vee \left( \sigma_{k}^{1 /(1+\omega)}\|g_k\|^{\omega /(1+\omega)} \right)  \right) }.
	$$
\end{lemma}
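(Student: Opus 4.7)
The plan is to use the Cauchy condition to reduce the claim to a one-dimensional lower bound along $\mathrm{span}\{g_k\}$, and then optimize explicitly. By \cref{eq:tc-c}, for every $\eta\in\mathrm{span}\{g_k\}$ we have
$\bar{m}^{\mathrm{ar}}_{x_k}(0)-\bar{m}^{\mathrm{ar}}_{x_k}(\eta_k)\;\ge\;\bar{m}^{\mathrm{ar}}_{x_k}(0)-\bar{m}^{\mathrm{ar}}_{x_k}(\eta_k^{\mathrm{C}})\;\ge\;\bar{m}^{\mathrm{ar}}_{x_k}(0)-\bar{m}^{\mathrm{ar}}_{x_k}(\eta)$,
so it suffices to produce a single well-chosen $\eta$ along $-g_k$ that achieves the stated decrease. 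Writing $\eta=-\tau\,g_k/\|g_k\|$ with $\tau>0$ and expanding the definition of $\bar{m}^{\mathrm{ar}}_{x_k}$, I get the univariate expression
$$
\bar{m}^{\mathrm{ar}}_{x_k}(0)-\bar{m}^{\mathrm{ar}}_{x_k}(\eta)\;=\;\tau\|g_k\|\;-\;\tfrac{\tau^2}{2\|g_k\|^2}\langle g_k,H_k g_k\rangle\;-\;\tfrac{\sigma_k}{2+\omega}\tau^{2+\omega}.
$$

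Next I would invoke the uniform Hessian bound from \cref{asmp} to get $\langle g_k,H_k g_k\rangle\le\beta_H\|g_k\|^2$, which yields the clean lower bound
$$
\bar{m}^{\mathrm{ar}}_{x_k}(0)-\bar{m}^{\mathrm{ar}}_{x_k}(\eta)\;\ge\;\tau\|g_k\|-\tfrac{\beta_H}{2}\tau^2-\tfrac{\sigma_k}{2+\omega}\tau^{2+\omega}.
$$
Set $M\coloneqq \beta_H\vee\bigl(\sigma_k^{1/(1+\omega)}\|g_k\|^{\omega/(1+\omega)}\bigr)$, so that $\beta_H\le M$ and $\sigma_k\|g_k\|^{\omega}\le M^{1+\omega}$ by construction. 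The plan is to pick $\tau=\|g_k\|/(2M)$; then the linear term contributes exactly $\|g_k\|^2/(2M)$, the quadratic term is bounded by $\beta_H\|g_k\|^2/(8M^2)\le \|g_k\|^2/(8M)$, and the regularization term is bounded via
$$
\tfrac{\sigma_k}{2+\omega}\tau^{2+\omega}
\;\le\;\tfrac{1}{2+\omega}\cdot\tfrac{\sigma_k\|g_k\|^{\omega}}{M^{1+\omega}}\cdot\tfrac{\|g_k\|^2}{(2M)\,2^{1+\omega}}
\;\le\;\tfrac{\|g_k\|^2}{8M},
$$
using $\omega\in(0,1]$ so that $2^{1+\omega}(2+\omega)\ge 8$. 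Combining these three contributions gives $\|g_k\|^2/(2M)-\|g_k\|^2/(8M)-\|g_k\|^2/(8M)=\|g_k\|^2/(4M)$, which is exactly the claimed bound.

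The calculation is elementary, so the main conceptual step is just choosing the right trial $\tau$. The choice $\tau=\|g_k\|/(2M)$ is natural because $M$ is defined precisely so that the quadratic term (bounded by $\beta_H$) and the regularization term (scaling like $\sigma_k^{1/(1+\omega)}\|g_k\|^{\omega/(1+\omega)}$) are individually dominated by a small fraction of the linear term. The only mild nuisance is checking the constant $2^{1+\omega}(2+\omega)\ge 8$ uniformly in $\omega\in(0,1]$, which holds since the left side is increasing in $\omega$ and attains value $4$ at $\omega=0$ and $12$ at $\omega=1$; a more careful bookkeeping shows $\omega=0$ would give equality with $8$, and on $(0,1]$ the inequality is strict, so no logarithmic or $\omega$-dependent loss appears. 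No other obstacles arise, since positivity of $\bar{m}^{\mathrm{ar}}_{x_k}(0)-\bar{m}^{\mathrm{ar}}_{x_k}(\eta_k)$ is automatic from the Cauchy condition itself.
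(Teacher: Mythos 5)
Your proof is correct and is essentially the same as the paper's: both reduce via \cref{eq:tc-c} to a one-dimensional lower bound along the Cauchy ray $-g_k$, bound the curvature term by $\langle g_k,H_kg_k\rangle\le\beta_H\|g_k\|^2$, and then pick a balanced step-size; the paper takes $\tau\|g_k\|=\tfrac{\|g_k\|}{2\beta_H}\wedge\bigl(\tfrac{2+\omega}{4\sigma_k\|g_k\|^{\omega}}\bigr)^{1/(1+\omega)}\|g_k\|$ while your unified $\tau=\|g_k\|/(2M)$ is the same choice up to a slightly looser constant, and both land on the $1/4$ factor. One small arithmetic slip worth fixing: since your denominator is $(2M)\cdot 2^{1+\omega}=2^{2+\omega}M$, the constant check should be $(2+\omega)2^{2+\omega}\ge 8$ (which equals $8$ at $\omega=0$ and $24$ at $\omega=1$), not $(2+\omega)2^{1+\omega}\ge 8$; the values $4$ and $12$ you quote are for the wrong expression, though your subsequent ``more careful bookkeeping'' remark correctly identifies equality with $8$ at $\omega=0$. (You may also wish to note, as the paper does, that the $g_k=0$ case holds trivially since your parametrization divides by $\|g_k\|$.)
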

\begin{proof}
	In this proof, we omit the superscript and subscript of $\bar{m}^{\mathrm{ar}}_{x_k}$.
	Let $\eta^{\mathrm{C}}$ be the Cauchy point. Then for any $\tau\in\R$, we have $\bar{m}(\eta^{\mathrm{C}}) \le \bar{m}(-\tau g_k)$.
	Therefore, we have
	$$
		\begin{aligned}
			\bar{m}(0)-\bar{m}(\eta^\mathrm{C})
			 & \ge \bar{m}(0)-\bar{m}\left(-\tau g_k\right)                                                                                                              \\
			 & =\tau\left\|g_k\right\|^2-\frac{1}{2} \tau^2\left\langle g_k, H_k g_k\right\rangle-\frac{\sigma_k}{2+\omega} \tau^{2+\omega}\left\|g_k\right\|^{2+\omega} \\
			 & \ge \tau\left\|g_k\right\|^2\left(1-\frac{\tau \beta_H}{2}-\frac{\sigma_k \tau^{1+\omega}\left\|g_k\right\|^{\omega}}{2+\omega}\right).
		\end{aligned}
	$$
	The above inequality holds for any $\tau$. Let $\tau=\frac{1}{2 \beta_H} \vee\left(\frac{2+\omega}{4 \sigma_k\left\|g_k\right\|^\omega}\right)^{1 /(1+\omega)}$. We get
	\begin{align}\notag
		\bar{m}(0)-\bar{m}\left(\eta^\mathrm{C}\right)
		 & \geqslant \tau\left\|g_k\right\|^2\left(\left(\frac{1}{2}-\frac{1}{2 \beta_H} \cdot \frac{\beta_H}{2}\right)
		+\left(\frac{1}{2}-\frac{2+\omega}{4 \sigma_k\left\|g_k\right\|^{\omega}}\cdot \frac{\sigma_k\left\|g_k\right\|^{\omega}}{2+\omega}\right)\right)     \\\notag
		 & =\frac{\tau\left\|g_k\right\|^2}{2}                                                                                                                \\\notag
		 & \geqslant \frac{\left\|g_k\right\|^2}{4\left(\beta_{H} \vee \left(\sigma_k^{1/(1+\omega)} \left\|g_k\right\|^{\omega /(1+ \omega)}\right)\right)}.
	\end{align}
	Notice that the above inequality also holds for $g_k=0$, in which case we let $\tau = 1 /(2\beta_{H})$.
\end{proof}

\begin{proof}[Proof of \cref{prop:g}]
	Our proof follows the same logic of \cite[Theorem 2.5 and Corollary 2.6]{cartis2011Adaptivecubica} which proves that $\lim_{k}\|g_k\|=0$.
	Here we aim to show $\limsup_k \|g_k\| < +\infty$.
	The result automatically holds if $\mathcal{S}$ is finite. Thus, we assume $|\mathcal{S}| = +\infty$ in the rest of the proof.
	We first claim that $\liminf_{k}\|g_k\| < +\infty$.
	If not, for any $C>0$, there exists $K_1 \in \mathbb{N}$ such that for any $k\ge K_1$, it holds that $\|g_k\| > C$.
	By \cref{lem:cdec}, we have
	$$
		\sum_{k\in \mathcal{S}}f(x_k) - f(x_{k+1})
		\ge \sum_{\substack{k\in \mathcal{S},~ k \ge K_1}} \frac{\varrho_{1}\|g_k\|^2}{4\left( \beta_{H}\vee \left( \sigma_k ^{1 /(1+\omega)}\|g_k\|^{\omega /(1+\omega)} \right)  \right) }.
	$$
	Since $f$ is bounded below, we know the summand sequence of the right hand side of the above inequality converges to zero.
	Then since $\|g_k\| > C$, we know that $\beta_{H}$ in the denominator must be inactive when $k\ge K_2$ for some $K_2 \ge K_1$.
	Therefore, we get
	$$
		+\infty > \sum_{k\in \mathcal{S}}f_k - f_{k+1} \ge \sum_{\substack{k\in \mathcal{S},~k\ge K_2}} \frac{\varrho_1\|g_k\|^{2 - \omega /(1+\omega)}}{4\sigma_{k}^{1 /(1+\omega)}}
		\ge \frac{\varrho_1 C}{4}\sum_{\substack{k\in \mathcal{S},~k\ge K_2}}\left( \frac{\|g_k\|}{\sigma_k} \right) ^{1 /(1+\omega)},
	$$
	which gives
	\begin{equation}\label{eq:prop-g-1}
		\lim_{k_1 \to \infty}
		\sum_{\substack{k\in \mathcal{S},~ k\ge k_1}}\left( \frac{\|g_k\|}{\sigma_k} \right) ^{1 /(1+\omega)} = 0,
	\end{equation}
	which further gives $\lim_{\mathcal{S}\ni k \to \infty} \|g_k\| /\sigma_k = 0$ and thus $\lim_{\mathcal{S}\ni k \to \infty}\sigma_k = +\infty$ {due to $\|g_k\|>C$}.
	Let $C \ge \beta_{H}+1$. Then, by \cref{prop:s}, there exists $K_3 \ge K_2$ such that for any $k\in \mathcal{S}$ and $k\ge K_3$, we have
	\begin{equation}\label{eq:prop-g-2}
		\|\eta_k\| \le \left( \frac{6\|g_k\|}{\sigma_k} \right) ^{1 /(1+\omega)} \le 1.
	\end{equation}
	Then, by the triangle inequality and \cref{lem:ma-diff}, for any $k_{2} \ge k_1 \ge K_3$, we have
	\begin{align}\notag
		\dist(x_{k_2},x_{k_1})
		 & \le\sum_{\substack{k\in \mathcal{S}                                                                                           \\k_1\le k{<}k_2}}\dist(x_k,x_{k+1})\\\notag
		 & = \sum_{\substack{k\in \mathcal{S}                                                                                            \\k_1\le k{<}k_2}}\|\exp _{x_k}^{-1}(R_{x_k}(\eta_k))\|\\\notag
		 & \hspace{-5pt} \stackrel{\hyperref[lem:ma-diff]{\scriptscriptstyle\textup{Prop.\,1}}}{\le}\hspace{-8pt}
		\sum_{\substack{k\in \mathcal{S}                                                                                                 \\k_1\le k{<}k_2}}\|\eta_k\| \cdot (1 + C_{R}\|\eta_k\|^{\nu})\\\notag
		 & \stackrel{\cref{eq:prop-g-2}}{\le}                         (1 + C_{R}) \cdot \sum_{\substack{k\in \mathcal{S}                 \\k_1\le k<k_2}}\|\eta_k\|\\\notag
		 & \stackrel{\cref{eq:prop-g-2}}{\le}                         6^{1 /(1+\omega)}(1 + C_{R})\cdot \sum_{\substack{k\in \mathcal{S} \\k_1\le k<k_2}} \left( \frac{\|g_k\|}{\sigma_k} \right)^{1 /(1+\omega)}\\\notag
		 & \le 6^{1 /(1+\omega)}(1 + C_{R})\cdot \sum_{\substack{k\in \mathcal{S}                                                        \\k\ge k_1}} \left( \frac{\|g_k\|}{\sigma_k} \right)^{1 /(1+\omega)}\\\label{eq:prop-g-3}
		 & \stackrel{\eqref{eq:prop-g-1}}{\to} \, 0 \quad \text{as}\quad k_1 \to +\infty.
	\end{align}
	Therefore, $\{ x_k \}$ is a Cauchy sequence. Since $\M$ is complete, we know $\{ x_k \}$ converges, contradicting the hypothesis $\liminf_k \|g_k\| = +\infty$.

	Now suppose $\liminf_{k}\|g_k\| \le C$. If {$\limsup_{k}\|g_k\|=+\infty$}, then there exists $\mathcal{S}_{C}\subset \mathcal{S}$ such that $|\mathcal{S}_{C}| = +\infty$ and $\|g_k\| \ge C+1$ for any $k\in \mathcal{S}_{C}$.
	For any $k_1\in \mathcal{S}_{C}$, let $k_2$ be the smallest integer such that $k_1 \le k_2 \in \mathcal{S}$ and $\|g_{k_2}\| \le C$. Consider the infinite index set formed by these consecutive index sequences (say $\mathcal J$), whose definition implies $\|g_k\| > C$ for all $k\in \mathcal J$.
	We observe a similar pattern as in \cref{eq:prop-g-1,eq:prop-g-2,eq:prop-g-3}, which only relies on the condition that $\|g_k\| > C$ within the specified index set. This leads us to the conclusion that
	$$
		\dist(x_{k_2},x_{k_1}) \to 0 \quad \text{as} \quad k_1 \to +\infty.
	$$
	By the Lipschitzness of $\g f$, we get
	$$
		\|g_{k_1}\| \le \|g_{k_2}\| + \|g_{k_1} - g_{k_2}\| \to \|g_{k_2}\| \le C \quad \text{as} \quad k_1 \to +\infty,
	$$
	contradicting to the assumption that $\|g_{k_1}\|\ge C+1$. Therefore, we conclude $\limsup_{k} \|g_k\| < +\infty$ and thus $\{ g_k \}$ is bounded.
\end{proof}

\subsection{Remark on Proposition~\ref{prop:krylov}} \label{sec:prop-krylov}

{With a regular Krylov subspace basis specified in \cref{sec:kry},}
\cref{prop:krylov-ar} in \cref{prop:krylov} is directly given by \citet[Corollary 4.2]{carmon2020FirstOrderMethods} and the fact that $|\lambda_{\max}(H_k)| \vee |\lambda_{\min}(H_k)| \le \beta_{H}$, {with constant $C_{\mathrm{sub},1}$ being
\[
	C_{\mathrm{sub},1} = \sqrt{\beta_{H}\left( 8 + \log^{2}\frac{2\|g_k\|}{\left<g_k,\nu_{k}\right>} \right)}
	,\]
where $\nu_{k}$ is the corresponding eigenvector of $\lambda_{\min}(H_k)$.
Remarkably, if $\left< g_k,\nu_{k} \right> = 0$, we can perturb the basis of the Krylov subspace to ``randomize away the hard case'' (please refer to \cite[Section 5.2]{carmon2020FirstOrderMethods} for more details).
Then, $\eta_k$ is an $\epsilon$-optimal solution to \cref{eq:model-arn-2} with probability at least $1-\delta$ and $C_{\mathrm{sub},1}$ becoming
\[
	2\sqrt{\beta_{H}\left( 2 + \log^{2}(2\sqrt{n}/\delta) \right)}
	,\]
where $n$ is the dimension of the manifold.
}

{
\cref{prop:krylov-ar-2,prop:krylov-tr} are given by \cite[Corollary 4.2]{carmon2020FirstOrderMethods} and \cite[Theorem 4.1]{carmon2020FirstOrderMethods}, respectively.
Using the fact that $\frac{a + c}{b + c} > \frac{a}{c}$ for $a,b,c>0$ and $a < b$, we have
\[
C_{\mathrm{sub},2} \le \frac{\sqrt{\beta_{H}}}{4}(1 + \log (1 \vee 36\left( \bar{m}_{x_k}(0) - \bar{m}_{x_k}(\eta^{*} )\right)))
,\]
}%
where $\eta^* \coloneqq \argmin_{\eta} \bar{m}_{x_k}(\eta)$. Recall that we set $\alpha = \mu = \nu = \theta = \omega$.

For RAR (\cref{prop:krylov-ar-2}), by \cref{prop:s} (which remains valid when using \cref{alg:ar-sub} as the subproblem solver, as discussed in \cref{sec:valid-aroc}), we have
$$
	\|\eta^*\| \le \left( \frac{3(\beta_{H}+1)}{\ubar{\sigma}} \right) ^{1 /\alpha} \vee \|g_k\| = O(\|g_k\|\vee 1).
$$
Then, we get
$$
	\begin{aligned}
		\bar{m}^{\mathrm{ar}}_{x_k}(0) - \bar{m}^{\mathrm{ar}}_{x_k}(\eta^*) \le & \|g_k\|\|\eta^*\| + \frac{1}{2}\|H_k\|\|\eta^*\|^2 + \frac{\sigma_k}{2+\alpha} \|\eta^*\|^{2+\alpha}     \\
		\le                                                                      & \|g_k\|\|\eta^*\| + \frac{\beta_{H}}{2}\|\eta^*\|^2 + \frac{\bar{\sigma}}{2+\alpha}\|\eta^*\|^{2+\alpha} \\
		=                                                                        & O(\|g_k\|^{2+\alpha}\vee 1),
	\end{aligned}
$$
where we use the fact that $\bar{\sigma} = O(1)$ when $\alpha = \mu = \nu = \theta = \omega$, which is regardless of the subproblem solver (see \cref{sec:valid-aroc}).

For RTR (\cref{prop:krylov-tr}), we have
$$
	\bar{m}^{\mathrm{tr}}_{x_k}(0) - \bar{m}^{\mathrm{tr}}_{x_k}(\eta^*) \le \|g_k\|\|\eta^*\| + \frac{1}{2}(\|H_k\| + 2\epsilon_{H})\|\eta^*\|^2
	\le \|g_k\|\bar{\Delta} + \frac{\beta_{H}+2\epsilon_{H}}{2}\bar{\Delta}^2
	= O(\|g_k\|\vee 1).
$$

Then, we can proceed the analysis with the maximum number of subproblem iterations explicitly dependent on $\|g_k\|$.
Notably, both RAR and RTR converge to a \textit{small gradient region} characterized by $\|g_k\|\le \epsilon_{g}$. Consequently, we can assert the existence of a positive constant $\beta_{g}$ such that $\|g_k\|\le \beta_{g}$ for all $k\in \mathcal{K}$.
	{
		Hence, we get an explicit expression of $C_{\mathrm{sub},2}=O(1)$:
		\[
			C_{\mathrm{sub},2} \le \frac{\sqrt{\beta_{H}}}{4} \left(1 + \log \left( 36 \left(1 + \frac{\beta_{H}}{2} + \frac{\bar{\sigma}}{1+\alpha} + \bar{\Delta} + \frac{\beta_{H}+2\epsilon_{H}}{2}\bar{\Delta}^{2}\right)\left(1 \vee \beta_{g} \vee \left(\frac{3(\beta_{H}+1)}{\ubar{\sigma}}\right)^{1 /\alpha} \right)^{2+\alpha} \right)\right)
			.\]
	}

\subsection{Proof of Proposition~\ref{prop:tcd}}\label{sec:prop-tcd}

\begin{proof}
	In this proof, we omit the subscript and superscript of $\bar{m}^{\mathrm{ar}}_{x_k}$.
	By the optimality condition of $\min \bar{m}(\eta)$ \Citep[Theorem 1.1]{hsia2017theory}, we know
	$$
		\begin{cases}
			g_k + H_k \eta^* + \sigma_k\|\eta^*\|^{\alpha}\eta^* = \g \bar{m}(\eta^*) = 0, \quad                       & \text{first-order optimality},  \\
			\lambda_{\min}(H_k) + \sigma_k\|\eta^{*}\|^{\alpha} \ge \lambda_{\min}(\hess \bar{m}(\eta^*)) \ge 0, \quad & \text{second-order optimality},
		\end{cases}
	$$
	where $\eta^{*} = \argmin_{\eta} \bar{m}^{\mathrm{ar}}_{x_k}(\eta)$.
	Since $H_k \not\succeq -\epsilon_{H}I$, by the second-order optimality condition, we get $\|\eta^*\|\ge (\epsilon_{H}/\sigma_k)^{1 /\alpha}$.
	The first-order optimality condition gives
	$$
		\begin{aligned}
			\bar{m}(0) - \bar{m}(\eta^*)
			 & = -\left<g_k,\eta^* \right> - \frac{1}{2}\left<\eta^*, H_k \eta^* \right> - \frac{\sigma_k}{2+\alpha}\|\eta^*\|^{2+\alpha}                                         \\
			 & = \left<\eta^*, H_k \eta^* \right> + \sigma_k \|\eta^*\|^{2+\alpha} - \frac{1}{2}\left<\eta^*, H_k \eta^* \right> - \frac{\sigma_k}{2+\alpha}\|\eta^*\|^{2+\alpha} \\
			 & = \frac{1}{2}\left<\eta^*, H_k \eta^* \right> + \frac{1+\alpha}{2+\alpha}\sigma_k\|\eta^*\|^{2+\alpha}.
		\end{aligned}
	$$
	Combined with the second-order optimality condition, we get
	$$
		\bar{m}(0) - \bar{m}(\eta^*)
		\ge \frac{1}{2}(-\sigma_k\|\eta^*\|^{2+\alpha})+ \frac{1+\alpha}{2+\alpha}\sigma_k\|\eta^*\|^{2+\alpha}
		= \frac{\alpha}{2(2+\alpha)} \sigma_k\|\eta^*\|^{2+\alpha}.
	$$

	Then if $\bar{m}(\eta) - \bar{m}(\eta^*) \le \alpha\sigma_k\|\eta^*\|^{2+\alpha} / 12$, we have
	$$
		\begin{aligned}
			m(0) - m(\eta) \ge & \bar{m}(0) - \bar{m}(\eta)
			= \bar{m}(0) - \bar{m}(\eta^*) - (\bar{m}(\eta) - \bar{m}(\eta^*))
			\ge \frac{\alpha\sigma_k}{2(2+\alpha)}\|\eta^*\|^{2+\alpha} - \frac{\alpha\sigma_k}{12}\|\eta^*\|^{2+\alpha} \\
			\ge                & \frac{\alpha\sigma_k}{12}\|\eta^*\|^{2+\alpha}
			\ge \frac{\alpha\epsilon_{H}^{(2+\alpha) / \alpha}}{12\bar{\sigma}^{2 /\alpha}}.
		\end{aligned}
	$$
	By \cref{prop:krylov-ar} of \cref{prop:krylov} (\citep[Corollary 4.2]{carmon2020FirstOrderMethods}), to make $\bar{m}(\eta) - \bar{m}(\eta^*) \le \alpha\sigma_k\|\eta^*\|^{2+\alpha} / 12$, we need at most
	{
	\[
		C_{\mathrm{sub},1}\left(\frac{\alpha\sigma_k\|\eta^*\|^{2+\alpha}}{12}\right)^{-1 /2}\|\eta^*\| + 1 /2
		= C_{\mathrm{sub},3} \|\eta^*\|^{-\alpha /2} + 1 /2
		\le C_{\mathrm{sub},3}((\epsilon_{H} / \sigma_{k})^{1 /\alpha})^{-\alpha /2} + 1 /2
		= C_{\mathrm{sub}}^{\mathrm{ar}}\epsilon_{H}^{-1 /2}
	\]
	iterations, where
	$C_{\mathrm{sub},3}\coloneqq C_{\mathrm{sub},1} \cdot (\alpha \ubar{\sigma} / 12)^{-1 /2}$
	and $C_{\mathrm{sub}}^{\mathrm{ar}} \coloneqq C_{\mathrm{sub},3} \cdot \bar{\sigma}^{1 /2} +1 /2$.
	}
\end{proof}

{
\subsection{Proof of Proposition~\ref{prop:tr-sub}}\label{sec:prop-tr-sub}
}
\begin{proof}
	Recall that $\bar{H}_k = H_k + 2\epsilon_{H}I$. Suppose $H_k \succeq -\epsilon_{H}I$ and $\|g_k\| > \epsilon_{g}$. Then $\|\bar{H}_k\|\le \beta_{H}+2\epsilon_{H}$ and $\lambda_{\min}(\bar{H}_k) \ge \epsilon_{H}$.
	Let $\eta^{*} \coloneqq -\bar{H}_{k}^{-1}g_k$.
	We have
	\[
		\|\grad \bar{m}_{x_k}(\eta) \| = \| g_k + \bar{H}_k\eta\|
		=\|\bar{H}_{k}(\eta-\eta^{*})\|
		\le (\beta_{H}+2\epsilon_{H})\|\eta-\eta^{*}\|
		.\]
	For the other direction, we have
	\begin{align*}
		\bar{m}_{x_k}(\eta) - \bar{m}_{x_k}(\eta^{*})
		 & = \left< g_k,\eta \right> + \frac{1}{2}\left< \eta,\bar{H}_{k}\eta \right> - \left< g_k,\eta^{*} \right> - \frac{1}{2}\left< \eta^{*}, \bar{H}_{k}\eta^{*} \right> \\
		 & = \left< g_k,\eta \right> + \frac{1}{2}\left< \eta,\bar{H}_{k}\eta \right> +  \frac{1}{2}\left< \eta^{*}, \bar{H}_{k}\eta^{*} \right>                              \\
		 & = \frac{1}{2}\left< \eta-\eta^{*},\bar{H}_{k}(\eta-\eta^{*} )  \right>                                                                                             \\
		 & \ge \frac{1}{2}\lambda_{\min}(\bar{H}_{k})\|\eta-\eta^{*}\|^2                                                                                                      \\
		 & \ge \frac{\epsilon_{H}}{2}\|\eta-\eta^{*}\|^2
		.\end{align*}
	Combining the above two inequalities gives
	\[
		\|\grad \bar{m}_{x_k}(\eta)\| \le (\beta_{H}+2\epsilon_{H}) \sqrt{2\epsilon_{H}^{-1} (\bar{m}_{x_k}(\eta) - \bar{m}_{x_k}(\eta^{*} )) }
		.\]
	On the other hand, by \cref{lem:g-bound}
	\[
		\|\eta_k\| \ge (\beta_{H}+2\epsilon_{H})^{-1}\|g_k\| \ge \frac{\epsilon_{g}}{\beta_{H}+2\epsilon_{H}}
		.\]
	Therefore, \cref{eq:tc-1} is satisfied if
	\[
		(\beta_{H}+2\epsilon_{H}) \sqrt{2\epsilon_{H}^{-1} (\bar{m}_{x_k}(\eta) - \bar{m}_{x_k}(\eta^{*} )) } \le \left( \frac{\epsilon_{g}}{\beta_{H}+2\epsilon_{H}} \right)^{1+\theta_1}
		,\]
	which is equivalent to
	\[
		\bar{m}_{x_k}(\eta) - \bar{m}_{x_k}(\eta^{*})
		\le \frac{\epsilon_{H}\epsilon_{g}^{2+2\theta_1} }{2(\beta_{H}+2\epsilon_{H})^{4+2\theta_1}}
		.\]
	By \cref{prop:krylov-tr} of \cref{prop:krylov} (\citep[Theorem 4.1]{carmon2020FirstOrderMethods}), the above criterion is met within at most
	\[
		C_{\mathrm{sub},2}\epsilon_{H}^{-1 /2} \log \left(\frac{\epsilon_{H}\epsilon_{g}^{2+2\theta_1} }{2(\beta_{H}+2\epsilon_{H})^{4+2\theta_1}}\right)^{-1}
	\]
	iterations.
	Letting $C_{\mathrm{sub}}^{\mathrm{tr}} \coloneqq C_{\mathrm{sub},2} \log\left(\frac{\epsilon_{H}\epsilon_{g}^{2+2\theta_1} }{2(\beta_{H}+2\epsilon_{H})^{4+2\theta_1}}\right)^{-1}$ gives the desired result.
\end{proof}

\ifSubfilesClassLoaded{\bibliography{ic}}{}
\section{Validating Iteration Complexity of RAR} \label{sec:valid-aroc}

In this subsection, we revisit \cref{sec:rar} while using \cref{alg:ar-sub} as the RAR subproblem process.
We highlight the only differences between \cref{alg:ar-sub} and a pure Krylov subspace method with \cref{eq:tc-1,eq:tc-2}:
a new termination criterion \cref{eq:tc-d} replacing \cref{eq:tc-2} (\cref{line:tcd})
and occasional recalculations of the iteration step using a regularized Hessian $\bar{H}_k = H_k + 2\epsilon_{H}I$ (\cref{line:ar-return-recal}).
The MEO does not impact the iteration complexity because we only use it to test the second-order stationarity for RAR.
Intuitively, the regularized Hessian will merely introduce a slightly larger Hessian operator norm bound $\beta_{H} + 2\epsilon_{H}$; and new termination criterion \cref{eq:tc-d}, which replaces \cref{eq:tc-2}, directly applies a condition on the model decrease, which will not only fulfill all previous discussions but also make their establishment more straightforward.

We begin by examining \cref{prop:g,prop:s,lem:g-bound}, which only rely on the Cauchy condition \cref{eq:tc-c}.
One can observe that in these propositions, as well as in \cref{lem:cdec}, any reliance on $H_k$ gets transformed into its corresponding norm bound $\beta_{H}$.
Therefore, the regularized Hessian merely leads to a slightly larger norm bound: $\beta_{H} + 2\epsilon_{H}$.
Without loss of generality, we can substitute all instances of $\beta_{H}$ with $\beta_{H}+2\epsilon_{H}$.
Furthermore, the corollary inequality \cref{eq:cauchy} of the Cauchy condition remains valid because
$$
	m_{x_k}(0) - m_{x_k}(\eta_k)
	= \bar{m}_{x_k}(0) - \bar{m}_{x_k}(\eta_k) + \epsilon_{H}\|\eta_k\|^2 + \varphi(\eta_k;\sigma_k)
	\ge \epsilon_{H}\|\eta_k\|^2 + \varphi(\eta_k;\sigma_k) \ge \varphi(\eta_k;\sigma_k).
$$

We then examine \cref{lem:sigmabar}. Recall that we have set $\omega = \mu = \nu$. Then, for \case{\rom{1}} in \cref{lem:sigmabar}, \cref{eq:u-2} directly gives a bound $\sigma_k \le C_{\sigma,2}$. Thus, we do not need to consider \case{\rom{1}.\rom{2}} where \cref{eq:tc-2} comes into play.
The bound of \case{\rom{2}} remains unchanged as it does not involve any termination criterion.
Therefore, using \cref{alg:ar-sub}, the upper bound of $\sigma_k$ reduces to
$$
	\bar{\sigma} = \kappa_3 (C_{\sigma,2} \vee C_{\sigma,5} \vee C_{\sigma,6}).
$$

For \cref{lem:obj-dec-ar}, a slight modification is required to accommodate the utilization of a regularized Hessian.
We opt for
a single lower bound on the model decrease for any $k\in \mathcal{S}_{GG}\sqcup \mathcal{S}_{L}$:
\begin{equation}\label{eq:obj-dec-ar-new}
	m_{x_k}(0) - m_{x_k}(\eta_k) \gtrsim \min \left\{  \epsilon_{g}^{\tfrac{2(2+\alpha)}{1+\alpha}}\epsilon_{H}^{- \tfrac{2+\alpha}{\alpha}}, \epsilon_{H}^{\tfrac{2+\alpha}{\alpha}} \right\}.
\end{equation}
It is evident that $\eta_k$ returned using \cref{eq:tc-d} directly satisfies this lower bound.
For $\eta_k$ returned using \cref{eq:tc-1} and an unregularized Hessian, since \cref{alg:ar-sub} also necessitates $\|g_k\| > \epsilon_{g}$ in this case, $k\in \mathcal{S}_{GG}\sqcup \mathcal{S}_{L}$ implies $k\in \mathcal{S}_{GG}$. \cref{lem:obj-dec-ar} for $k\in \mathcal{S}_{GG}$ directly gives \cref{eq:obj-dec-ar-new} without adaptation in this case, because
$$
	\left.\underbrace{\min \left\{ \epsilon_{g}^{\frac{2(2+\alpha)}{1+\alpha}}\epsilon_{H}^{-\frac{2+\alpha}{\alpha}}, \epsilon_{H}^{\frac{2+\alpha}{\alpha}} \right\}}_{\eqref{eq:obj-dec-ar-new}}
	\bigg/
	\underbrace{\vphantom{\min \left\{ \epsilon_{g}^{\frac{2(2+\alpha)}{1+\alpha}}\right\}}\epsilon_{g}^{\frac{2+\alpha}{1+\alpha}}}_{\textup{Lem.}~\ref{lem:obj-dec-ar}}\right.
	= \min \left\{ \epsilon_{g}^{\frac{2+\alpha}{1+\alpha}}\epsilon_{H}^{-\frac{2+\alpha}{\alpha}}, \left( \epsilon_{g}^{\frac{2+\alpha}{1+\alpha}}\epsilon_{H}^{-\frac{2+\alpha}{\alpha}} \right) ^{-1} \right\} \le 1.
$$
Finally, if $\eta_k$ is returned using a regularized Hessian, \cref{eq:ar-obj-dec-1} becomes
\begin{equation*}
	\epsilon_{g} \le (C_{s,1} + \bar{\sigma})\|\eta_k\|^{1+\alpha} + 2{\epsilon}_H\|\eta_k\|,
\end{equation*}
which resembles \cref{eq:s-dec-5}.
Thus, similar to \case{\rom{2}.\rom{1}} and \case{\rom{2}.\rom{2}}, we get $\|\eta_k\| \gtrsim \epsilon_{g}^{1 /(1+\alpha)}$ when $\|\eta_k\|^{\alpha} \gtrsim \epsilon_H$;
and when $\|\eta_k\|^{\alpha}\not\gtrsim \epsilon_{H}$, we have
$$
	\begin{aligned}
		\|\eta_k\| \ge & \left( \frac{-1 + \sqrt{1 + (C_{s,1}+\bar{\sigma})\epsilon_{g}{\epsilon}_H^{-\frac{1+\alpha}{\alpha}}}}{C_{s,1}+\bar{\sigma}} {\epsilon}_H^{\frac{1+\alpha}{2\alpha}} \right) ^{\frac{2}{1+\alpha}}                        \\
		\ge            & \left( \frac{-1 + \sqrt{1 + C_{s,1}+\bar{\sigma}}}{C_{s,1}+\bar{\sigma}} \min \left\{ \epsilon_{g}{\epsilon}_H^{-\frac{1+\alpha}{2\alpha}}, {\epsilon}_H^{\frac{1+\alpha}{2\alpha}} \right\} \right) ^{\frac{2}{1+\alpha}} \\
		\gtrsim        & \min \left\{ \epsilon_{g}^{\frac{2}{1+\alpha}}{\epsilon}_H^{-\frac{1}{\alpha}}, {\epsilon}_H^{\frac{1}{\alpha}} \right\}.
	\end{aligned}
$$
Then by the Cauchy condition \cref{eq:cauchy}, \cref{eq:obj-dec-ar-new} holds.
To recap, \cref{eq:obj-dec-ar-new} holds for all cases.
Therefore, the iteration complexity of RAR in \cref{cor:ar}
turns into
$$
	O\left( \max \left\{ \epsilon_{g}^{-\tfrac{2(2+\alpha)}{1+\alpha}}\epsilon_{H}^{\tfrac{2+\alpha}{\alpha}} , \epsilon_{H}^{-\tfrac{2+\alpha}{\alpha}} \right\}\right) .
$$
Combined with the operation complexity of Lanczos-based Krylov subspace methods and MEO, we get \cref{cor:ar-oc}.

\ifSubfilesClassLoaded{\bibliography{ic}}{}
\section{Validating Iteration Complexity of RTR} \label{sec:valid-troc}

In this subsection, we revisit \cref{sec:rtr} while using \cref{alg:tr-sub} as the RTR subproblem process.
We highlight the only difference between \cref{alg:tr-sub} and a pure Krylov subspace method with \cref{eq:tc-1}, \cref{eq:tc-2,eq:tc-t}: we will occasionally use $\eta^{\mathrm{E}}$ returned by an MEO with \cref{eq:tc-e} to construct the iteration step, on which we do not impose any other conditions such as the Cauchy condition \cref{eq:tc-c}.
Intuitively, we only invoke an MEO if there is uncertainty regarding the second-order stationarity, and it will return an eigenvector associated with $\lambda_{\min}(H_k)$ when $H_k \not\succeq -\epsilon_{H}I$. Similar to \cref{eq:tc-d}, this approach will not only fulfill our previous discussions but also make their establishment more straightforward.

We only need to examine \cref{lem:deltamin,lem:obj-dec-tr}.
According to \cref{alg:tr-sub}, an MEO is invoked only if \texttt{max\_flag = true} or $\|\xi_{j}\| < \Delta_k$. In both scenarios, we know that the first-order Krylov subspace solution resides within the interior of the trust region, and thereby $\|g_k\| \le (\beta_{H} + 2\epsilon_{H})\|\xi_1\|$ (see \cref{sec:pf-g-bound}).
Therefore, when using an MEO, \cref{eq:deltamin-1} in \cref{lem:deltamin} should be reformulated as
$$
	\frac{3}{4}(m_{x_k}(0) - m_{x_k}(\eta_k)) \le C_{d}\|\eta_{k}\|^{2+\alpha} + C_{R}(\beta_{H} + 2\epsilon_{H})\|\xi_1\|\|\eta_k\|^{1+\alpha},
$$
where we set $\alpha = \mu = \nu$. By \cref{eq:tc-e} and $\|\eta_k\| = \Delta_k$, we have
$$
	\frac{3}{8}\epsilon_{H}\Delta_{k}^2 \le (C_{d} + C_{R}(\beta_{H} + 2\epsilon_{H}))\Delta_k ^{2+\alpha},
$$
which gives $\Delta_k \gtrsim \epsilon_{H}^{1 /\alpha}$.
Therefore, \cref{lem:deltamin} still holds using an MEO, perhaps with different constants.

For \cref{lem:obj-dec-tr}, if $\eta_k$ is returned by an MEO, by \cref{eq:tc-e}, we directly have
$$
	m_{x_k}(0) - m_{x_k}(\eta_k) = -\Delta_{k}\left<g_k,\eta^{\mathrm{E}} \right> - \frac{\Delta_{k}^2}{2}\left<\eta^{\mathrm{E}}, H_k \eta^{\mathrm{E}} \right>
	\ge \frac{1}{4}\epsilon_{H}\Delta_{k}^2.
$$
Therefore, \cref{lem:obj-dec-tr} also holds for MEO.
In summary, the iteration complexity of RTR in \cref{cor:tr} remains valid when using \cref{alg:tr-sub}, as does the operation complexity in \cref{cor:tr-oc}.

\section{Computational Experiments Setup}
\label{apx:exp}

\subsection{Problem Setup}

This subsection establishes the connection between the nonconvex manifold optimization problem considered in \cref{sec:exp} and the quadratic maximization problem with a $L_2$ norm ball constraint.
The latter arises frequently in practical applications such as game intervention theory \citep{parise2023GraphonGames,kor2023Multiactivityinfluencea}, and is defined as follows:
\[
	\begin{aligned}
		\max_{x \in \mathbb{R}^{n+1}} \quad & \frac{1}{2}\left< x,Ax \right>_{E} - \left< x,b \right>_{E} \\
		\text{s.t.} \quad                   & \|x\|_2 \le C
		,\end{aligned}
\]
where $\left< \cdot ,\cdot  \right>_{E}$ is the Euclidean inner product, $A \in \mathbb{R}^{(n+1) \times (n+1)}$ is a positive semidefinite matrix, $b \in \mathbb{R}^{n+1}$ is a vector, and $C$ is a positive constant.
Since the objective is convex, the optimum is achieved at the boundary of the constraint set.
Hence, the problem is equivalent to the following unconstrained manifold optimization problem:
\[
	\begin{aligned}
		\min_{p \in \mathbb{S}^{n}} \quad & -\frac{C^{2}}{2}\left< p,Ap \right>_{E} + C\left< p,b \right>_{E}
		,\end{aligned}
\]
where the manifold $\mathbb{S}^{n}$ is the unit sphere in $\mathbb{R}^{n+1}$.
We further notice that
\[
	\left< p,b \right>_{E} = \|b\|_{2}\left< p,\frac{b}{\|b\|_{2}} \right>_{E}
	= \|b\|_{2}\cos \measuredangle (p,b)
	= \|b\|_{2}\dist(p,b /\|b\|_{2})
	.\]
Therefore, the problem is equivalent to a nonconvex manifold optimization problem with a distance regularization term.

Generalizing the regularization order, the problem becomes
\[
	\min_{p \in \mathbb{S}^{n}} \quad f(p) = f_{1}(p) + \dist(p,b)^{2+\mu}
	,\]
where $f_1(p) = -C/(2\|b\|_{2})\left< p,Ap \right>_{E}$ is smooth but not necessarily convex, and $\dist(p,b)^{2+\mu}\in C^{2,\mu}$.
This arrives at the problem considered in \cref{sec:exp}.

\subsection{Objective's Smoothness}

This subsection establishes that the objective function $f$ considered in \cref{sec:exp} belongs to the class $C^{2,\mu}$.
Note that in the normal neighborhood of $x_0$ (which is a half sphere in our case), the distance function can be expressed as
\[
	\dist (x_0,x) = \norm{\exp_{x_0}^{-1}(x)}_{E}
	,\]
where the norm is the Euclidean norm on the tagent space $T_{x_0}\M$.
As the exponential map $\exp$ is a smooth diffeomorphism, it suffices to inspect the smoothness of $\|\cdot \|_{E}^{2+\mu}$.
Let $f_2(x)\coloneqq\|x\|^{2+\mu}$, $x\in\R^{n}$. We have
\[
	\nabla^2 f_2(x) = (2+\mu)\|x\|^{\mu}  \left( I + \mu\|x\|^{-2}xx^{T}  \right)
	.\]
Thus,
\[
	\begin{aligned}
		\left\| \nabla^{2}f_2(x) - \nabla^{2}f_2(y) \right\|_{\mathrm{op}} = & (2+\mu)\left\| \|x\|^{\mu}\left( I + \mu\frac{x x^{T}}{\|x\|^{2}} \right) - \|y\|^{\mu}\left( I + \mu\frac{y y^{T}}{\|y\|^{2}} \right) \right\|_{\mathrm{op}} \\
		\le                                                              & \underbrace{(2+\mu)\left|\|x\|^{\mu} - \|y\|^{\mu}\right| \left\| I + \mu \frac{x x^{T}}{\|x\|^{2}} \right\|_{\mathrm{op}}}_{H_1}
		+ \underbrace{(2+\mu)\mu\|y\|^{\mu}\left\| \frac{x x^{T}}{\|x\|^{2}} - \frac{y y^{T}}{\|y\|^{2}} \right\|_{\mathrm{op}}}_{H_2}
		.\end{aligned}
\]
One can easily verify that the triangle inequality holds for
\[
	\left| \|x\|^{\mu} - \|y\|^{\mu} \right| \le \|x-y\|^{\mu}
	.\]
Further, define unit vectors $u = x /\|x\|$ and $v = y /\|y\|$. We have $\|uu^{T}\|_{\mathrm{op}} = 1$ and thus
\[
	H_1 \le (2+\mu)(1+\mu)\|x-y\|^{\mu}
	.\]
For $H_2$, note that $uu^{T}-vv^{T}$ is a rank-2 matrix with only two possibly nonzero eigenvalues $\lambda$ and $-\lambda$, as its trace is zero. 
Hence, $\lambda^{2}$ and $(-\lambda)^{2}$ are the only two eigenvalues of $(uu^{T}-vv^{T})^{2}$, and we have
\[
	\|uu^{T}-vv^{T}\|_{\mathrm{op}} 
	= \lambda = \sqrt{\frac{\lambda^{2} + (-\lambda)^{2}}{2}} = \sqrt{\frac{\tr((uu^{T}-vv^{T})^{2})}{2}} = \sqrt{1 - (u^{T}v)^{2}}
	= |\sin\theta|
,\]
where $\theta\in[0,\pi]$ is the angle between $u$ and $v$. 
Also note that $u$, $v$ determine a one-dimensional circle and $\|u-v\| = 2\sin(\theta/2) = \sin\theta / \cos(\theta/2) \ge \sin\theta$. Further,
\[
	\|u-v\| = \left\| \frac{x}{\|x\|}-\frac{y}{\|y\|} \right\|
	= \frac{\|x\cdot \|y\| - y\cdot \|x\|\|}{\|x\|\|y\|}
	\le \frac{|\|y\|-\|x\||}{\|y\|} + \frac{\|x-y\|}{\|y\|} \le \frac{2\|x-y\|}{\|y\|}
	.\]
Finally, since $|\sin\theta|\le 1$ and $\mu\le 1$, we get
\[
	|\sin\theta| \le |\sin\theta|^{\mu} \le \|u-v\|^{\mu} \le \frac{2^{\mu}}{\|y\|^{\mu}}\|x-y\|^{\mu}
	.\]
Pluggin this into $H_2$ gives
\[
	H_2 \le (2+\mu)\mu\|y\|^{\mu} \cdot \frac{2^{\mu}}{\|y\|^{\mu}}\|x-y\|^{\mu} = 2^{\mu}\mu(2+\mu) \|x-y\|^{\mu}
	.\]
Together, we have
\[
	\left\| \nabla^{2}f_2(x) - \nabla^{2}f_2(y) \right\|_{\mathrm{op}} \le (2+\mu)(1+\mu+2^{\mu}\mu) \|x-y\|^{\mu}
	.\]
We conclude that $f(p) = f_{1}(p) + f_{2}(\exp_{b}^{-1}(p))\in C^{2,\mu}$, where $f_1$ is smooth and $f_2$ is $C^{2,\mu}$.

\ifSubfilesClassLoaded{\bibliography{ic}}{}


\begin{thebibliography}{39}
\ifx \bisbn   \undefined \def \bisbn  #1{ISBN #1}\fi
\ifx \binits  \undefined \def \binits#1{#1}\fi
\ifx \bauthor  \undefined \def \bauthor#1{#1}\fi
\ifx \batitle  \undefined \def \batitle#1{#1}\fi
\ifx \bjtitle  \undefined \def \bjtitle#1{#1}\fi
\ifx \bvolume  \undefined \def \bvolume#1{\textbf{#1}}\fi
\ifx \byear  \undefined \def \byear#1{#1}\fi
\ifx \bissue  \undefined \def \bissue#1{#1}\fi
\ifx \bfpage  \undefined \def \bfpage#1{#1}\fi
\ifx \blpage  \undefined \def \blpage #1{#1}\fi
\ifx \burl  \undefined \def \burl#1{\textsf{#1}}\fi
\ifx \doiurl  \undefined \def \doiurl#1{\url{https://doi.org/#1}}\fi
\ifx \betal  \undefined \def \betal{\textit{et al.}}\fi
\ifx \binstitute  \undefined \def \binstitute#1{#1}\fi
\ifx \binstitutionaled  \undefined \def \binstitutionaled#1{#1}\fi
\ifx \bctitle  \undefined \def \bctitle#1{#1}\fi
\ifx \beditor  \undefined \def \beditor#1{#1}\fi
\ifx \bpublisher  \undefined \def \bpublisher#1{#1}\fi
\ifx \bbtitle  \undefined \def \bbtitle#1{#1}\fi
\ifx \bedition  \undefined \def \bedition#1{#1}\fi
\ifx \bseriesno  \undefined \def \bseriesno#1{#1}\fi
\ifx \blocation  \undefined \def \blocation#1{#1}\fi
\ifx \bsertitle  \undefined \def \bsertitle#1{#1}\fi
\ifx \bsnm \undefined \def \bsnm#1{#1}\fi
\ifx \bsuffix \undefined \def \bsuffix#1{#1}\fi
\ifx \bparticle \undefined \def \bparticle#1{#1}\fi
\ifx \barticle \undefined \def \barticle#1{#1}\fi
\bibcommenthead
\ifx \bconfdate \undefined \def \bconfdate #1{#1}\fi
\ifx \botherref \undefined \def \botherref #1{#1}\fi
\ifx \url \undefined \def \url#1{\textsf{#1}}\fi
\ifx \bchapter \undefined \def \bchapter#1{#1}\fi
\ifx \bbook \undefined \def \bbook#1{#1}\fi
\ifx \bcomment \undefined \def \bcomment#1{#1}\fi
\ifx \oauthor \undefined \def \oauthor#1{#1}\fi
\ifx \citeauthoryear \undefined \def \citeauthoryear#1{#1}\fi
\ifx \endbibitem  \undefined \def \endbibitem {}\fi
\ifx \bconflocation  \undefined \def \bconflocation#1{#1}\fi
\ifx \arxivurl  \undefined \def \arxivurl#1{\textsf{#1}}\fi
\csname PreBibitemsHook\endcsname

\bibitem[\protect\citeauthoryear{Absil
  et~al.}{2007}]{absilTrustRegionMethodsRiemannian2007}
\begin{barticle}
\bauthor{\bsnm{Absil}, \binits{P.-A.}},
\bauthor{\bsnm{Baker}, \binits{C.G.}},
\bauthor{\bsnm{Gallivan}, \binits{K.A.}}:
\batitle{Trust-region methods on {{Riemannian}} manifolds}.
\bjtitle{Foundations of Computational Mathematics}
\bvolume{7},
\bfpage{303}--\blpage{330}
(\byear{2007})
\end{barticle}
\endbibitem

\bibitem[\protect\citeauthoryear{Zhang et~al.}{2024}]{zhang2023RiemannianTrust}
\begin{barticle}
\bauthor{\bsnm{Zhang}, \binits{C.}},
\bauthor{\bsnm{Xiao}, \binits{R.}},
\bauthor{\bsnm{Huang}, \binits{W.}},
\bauthor{\bsnm{Jiang}, \binits{R.}}:
\batitle{Riemannian trust region methods for sc 1 minimization}.
\bjtitle{Journal of Scientific Computing}
\bvolume{101}(\bissue{2}),
\bfpage{32}
(\byear{2024})
\end{barticle}
\endbibitem

\bibitem[\protect\citeauthoryear{Agarwal
  et~al.}{2021}]{agarwal2021Adaptiveregularization}
\begin{barticle}
\bauthor{\bsnm{Agarwal}, \binits{N.}},
\bauthor{\bsnm{Boumal}, \binits{N.}},
\bauthor{\bsnm{Bullins}, \binits{B.}},
\bauthor{\bsnm{Cartis}, \binits{C.}}:
\batitle{Adaptive regularization with cubics on manifolds}.
\bjtitle{Mathematical Programming}
\bvolume{188},
\bfpage{85}--\blpage{134}
(\byear{2021})
\end{barticle}
\endbibitem

\bibitem[\protect\citeauthoryear{Qi}{2011}]{qi2011Numericaloptimization}
\begin{botherref}
\oauthor{\bsnm{Qi}, \binits{C.}}:
Numerical optimization methods on {{Riemannian}} manifolds.
PhD thesis,
Florida State University
(2011)
\end{botherref}
\endbibitem

\bibitem[\protect\citeauthoryear{Nesterov and Polyak}{2006}]{nesterov2006cubic}
\begin{barticle}
\bauthor{\bsnm{Nesterov}, \binits{Y.}},
\bauthor{\bsnm{Polyak}, \binits{B.T.}}:
\batitle{Cubic regularization of {{Newton}} method and its global performance}.
\bjtitle{Mathematical Programming}
\bvolume{108}(\bissue{1}),
\bfpage{177}--\blpage{205}
(\byear{2006})
\end{barticle}
\endbibitem

\bibitem[\protect\citeauthoryear{Cartis et~al.}{2011}]{cartis2011Adaptivecubic}
\begin{barticle}
\bauthor{\bsnm{Cartis}, \binits{C.}},
\bauthor{\bsnm{Gould}, \binits{N.I.M.}},
\bauthor{\bsnm{Toint}, \binits{P.L.}}:
\batitle{Adaptive cubic regularisation methods for unconstrained optimization.
  {{Part II}}: Worst-case function- and derivative-evaluation complexity}.
\bjtitle{Mathematical Programming}
\bvolume{130}(\bissue{2}),
\bfpage{295}--\blpage{319}
(\byear{2011})
\end{barticle}
\endbibitem

\bibitem[\protect\citeauthoryear{Royer
  et~al.}{2020}]{royer2020NewtonCGAlgorithm}
\begin{barticle}
\bauthor{\bsnm{Royer}, \binits{C.W.}},
\bauthor{\bsnm{O’Neill}, \binits{M.}},
\bauthor{\bsnm{Wright}, \binits{S.J.}}:
\batitle{A {{Newton-CG}} algorithm with complexity guarantees for smooth
  unconstrained optimization}.
\bjtitle{Mathematical Programming}
\bvolume{180},
\bfpage{451}--\blpage{488}
(\byear{2020})
\end{barticle}
\endbibitem

\bibitem[\protect\citeauthoryear{Curtis
  et~al.}{2021}]{curtis2021TrustRegionNewtonCG}
\begin{barticle}
\bauthor{\bsnm{Curtis}, \binits{F.E.}},
\bauthor{\bsnm{Robinson}, \binits{D.P.}},
\bauthor{\bsnm{Royer}, \binits{C.W.}},
\bauthor{\bsnm{Wright}, \binits{S.J.}}:
\batitle{Trust-region {{Newton-CG}} with strong second-order complexity
  guarantees for nonconvex optimization}.
\bjtitle{SIAM Journal on Optimization}
\bvolume{31}(\bissue{1}),
\bfpage{518}--\blpage{544}
(\byear{2021})
\end{barticle}
\endbibitem

\bibitem[\protect\citeauthoryear{Carmon and
  Duchi}{2020}]{carmon2020FirstOrderMethods}
\begin{barticle}
\bauthor{\bsnm{Carmon}, \binits{Y.}},
\bauthor{\bsnm{Duchi}, \binits{J.C.}}:
\batitle{First-order methods for nonconvex quadratic minimization}.
\bjtitle{SIAM Review}
\bvolume{62}(\bissue{2}),
\bfpage{395}--\blpage{436}
(\byear{2020})
\end{barticle}
\endbibitem

\bibitem[\protect\citeauthoryear{Birgin
  et~al.}{2017}]{birgin2017Worstcaseevaluation}
\begin{barticle}
\bauthor{\bsnm{Birgin}, \binits{E.G.}},
\bauthor{\bsnm{Gardenghi}, \binits{J.}},
\bauthor{\bsnm{Mart{\'\i}nez}, \binits{J.M.}},
\bauthor{\bsnm{Santos}, \binits{S.A.}},
\bauthor{\bsnm{Toint}, \binits{P.L.}}:
\batitle{Worst-case evaluation complexity for unconstrained nonlinear
  optimization using high-order regularized models}.
\bjtitle{Mathematical Programming}
\bvolume{163}(\bissue{1-2}),
\bfpage{359}--\blpage{368}
(\byear{2017})
\end{barticle}
\endbibitem

\bibitem[\protect\citeauthoryear{Cartis
  et~al.}{2020}]{cartis2020SharpWorstCasea}
\begin{barticle}
\bauthor{\bsnm{Cartis}, \binits{C.}},
\bauthor{\bsnm{Gould}, \binits{N.I.M.}},
\bauthor{\bsnm{Toint}, \binits{P.L.}}:
\batitle{Sharp worst-case evaluation complexity bounds for arbitrary-order
  nonconvex optimization with inexpensive constraints}.
\bjtitle{SIAM Journal on Optimization}
\bvolume{30}(\bissue{1}),
\bfpage{513}--\blpage{541}
(\byear{2020})
\end{barticle}
\endbibitem

\bibitem[\protect\citeauthoryear{Cartis
  et~al.}{2011}]{cartis2011OptimalNewtontype}
\begin{botherref}
\oauthor{\bsnm{Cartis}, \binits{C.}},
\oauthor{\bsnm{Gould}, \binits{N.I.}},
\oauthor{\bsnm{Toint}, \binits{P.L.}}:
Optimal newton-type methods for nonconvex smooth optimization problems.
Technical report,
ERGO
(2011)
\end{botherref}
\endbibitem

\bibitem[\protect\citeauthoryear{Carmon et~al.}{2020}]{carmon2020Lowerbounds}
\begin{barticle}
\bauthor{\bsnm{Carmon}, \binits{Y.}},
\bauthor{\bsnm{Duchi}, \binits{J.C.}},
\bauthor{\bsnm{Hinder}, \binits{O.}},
\bauthor{\bsnm{Sidford}, \binits{A.}}:
\batitle{Lower bounds for finding stationary points {{I}}}.
\bjtitle{Mathematical Programming}
\bvolume{184}(\bissue{1-2}),
\bfpage{71}--\blpage{120}
(\byear{2020})
\end{barticle}
\endbibitem

\bibitem[\protect\citeauthoryear{Grapiglia and
  Nesterov}{2017}]{grapiglia2017RegularizedNewton}
\begin{barticle}
\bauthor{\bsnm{Grapiglia}, \binits{G.N.}},
\bauthor{\bsnm{Nesterov}, \binits{Y.}}:
\batitle{Regularized {{Newton}} methods for minimizing functions with
  {{H\"older}} continuous hessians}.
\bjtitle{SIAM Journal on Optimization}
\bvolume{27}(\bissue{1}),
\bfpage{478}--\blpage{506}
(\byear{2017})
\end{barticle}
\endbibitem

\bibitem[\protect\citeauthoryear{Boumal et~al.}{2019}]{boumal2019global}
\begin{barticle}
\bauthor{\bsnm{Boumal}, \binits{N.}},
\bauthor{\bsnm{Absil}, \binits{P.-A.}},
\bauthor{\bsnm{Cartis}, \binits{C.}}:
\batitle{Global rates of convergence for nonconvex optimization on manifolds}.
\bjtitle{IMA Journal of Numerical Analysis}
\bvolume{39}(\bissue{1}),
\bfpage{1}--\blpage{33}
(\byear{2019})
\end{barticle}
\endbibitem

\bibitem[\protect\citeauthoryear{Zhang and
  Zhang}{2018}]{zhang2018CubicRegularized}
\begin{botherref}
\oauthor{\bsnm{Zhang}, \binits{J.}},
\oauthor{\bsnm{Zhang}, \binits{S.}}:
A cubic regularized {{Newton's}} method over {{Riemannian}} manifolds.
arXiv preprint arXiv:1805.05565
(2018)
\end{botherref}
\endbibitem

\bibitem[\protect\citeauthoryear{Carmon et~al.}{2018}]{carmon2018accelerated}
\begin{barticle}
\bauthor{\bsnm{Carmon}, \binits{Y.}},
\bauthor{\bsnm{Duchi}, \binits{J.C.}},
\bauthor{\bsnm{Hinder}, \binits{O.}},
\bauthor{\bsnm{Sidford}, \binits{A.}}:
\batitle{Accelerated methods for nonconvex optimization}.
\bjtitle{SIAM Journal on Optimization}
\bvolume{28}(\bissue{2}),
\bfpage{1751}--\blpage{1772}
(\byear{2018})
\end{barticle}
\endbibitem

\bibitem[\protect\citeauthoryear{Yao et~al.}{2023}]{yao2023InexactNewtonCGa}
\begin{barticle}
\bauthor{\bsnm{Yao}, \binits{Z.}},
\bauthor{\bsnm{Xu}, \binits{P.}},
\bauthor{\bsnm{Roosta}, \binits{F.}},
\bauthor{\bsnm{Wright}, \binits{S.J.}},
\bauthor{\bsnm{Mahoney}, \binits{M.W.}}:
\batitle{Inexact {{Newton-CG}} algorithms with complexity guarantees}.
\bjtitle{IMA Journal of Numerical Analysis}
\bvolume{43}(\bissue{3}),
\bfpage{1855}--\blpage{1897}
(\byear{2023})
\end{barticle}
\endbibitem

\bibitem[\protect\citeauthoryear{Xu et~al.}{2020}]{xu2020Newtontypemethods}
\begin{barticle}
\bauthor{\bsnm{Xu}, \binits{P.}},
\bauthor{\bsnm{Roosta}, \binits{F.}},
\bauthor{\bsnm{Mahoney}, \binits{M.W.}}:
\batitle{Newton-type methods for non-convex optimization under inexact
  {{Hessian}} information}.
\bjtitle{Mathematical Programming}
\bvolume{184}(\bissue{1-2}),
\bfpage{35}--\blpage{70}
(\byear{2020})
\end{barticle}
\endbibitem

\bibitem[\protect\citeauthoryear{Agarwal et~al.}{2017}]{agarwal2017finding}
\begin{bchapter}
\bauthor{\bsnm{Agarwal}, \binits{N.}},
\bauthor{\bsnm{Allen-Zhu}, \binits{Z.}},
\bauthor{\bsnm{Bullins}, \binits{B.}},
\bauthor{\bsnm{Hazan}, \binits{E.}},
\bauthor{\bsnm{Ma}, \binits{T.}}:
\bctitle{Finding approximate local minima faster than gradient descent}.
In: \bbtitle{Proceedings of the 49th Annual ACM SIGACT Symposium on Theory of
  Computing},
pp. \bfpage{1195}--\blpage{1199}
(\byear{2017})
\end{bchapter}
\endbibitem

\bibitem[\protect\citeauthoryear{Royer and
  Wright}{2018}]{royer2018Complexityanalysis}
\begin{barticle}
\bauthor{\bsnm{Royer}, \binits{C.W.}},
\bauthor{\bsnm{Wright}, \binits{S.J.}}:
\batitle{Complexity analysis of second-order line-search algorithms for smooth
  nonconvex optimization}.
\bjtitle{SIAM Journal on Optimization}
\bvolume{28}(\bissue{2}),
\bfpage{1448}--\blpage{1477}
(\byear{2018})
\end{barticle}
\endbibitem

\bibitem[\protect\citeauthoryear{Jiang et~al.}{2022}]{jiang2022cubic}
\begin{barticle}
\bauthor{\bsnm{Jiang}, \binits{R.-J.}},
\bauthor{\bsnm{Zhou}, \binits{Z.-S.}},
\bauthor{\bsnm{Zhou}, \binits{Z.-R.}}:
\batitle{Cubic regularization methods with second-order complexity guarantee
  based on a new subproblem reformulation}.
\bjtitle{Journal of the Operations Research Society of China}
\bvolume{10}(\bissue{3}),
\bfpage{471}--\blpage{506}
(\byear{2022})
\end{barticle}
\endbibitem

\bibitem[\protect\citeauthoryear{Steihaug}{1983}]{steihaugConjugateGradientMethod1983}
\begin{barticle}
\bauthor{\bsnm{Steihaug}, \binits{T.}}:
\batitle{The conjugate gradient method and trust regions in large scale
  optimization}.
\bjtitle{SIAM Journal on Numerical Analysis}
\bvolume{20}(\bissue{3}),
\bfpage{626}--\blpage{637}
(\byear{1983})
\end{barticle}
\endbibitem

\bibitem[\protect\citeauthoryear{Toint}{1981}]{toint1981efficientsparsity}
\begin{bchapter}
\bauthor{\bsnm{Toint}, \binits{P.}}:
\bctitle{Towards an efficient sparsity exploiting {{Newton}} method for
  minimization}.
In: \bbtitle{Sparse Matrices and Their Uses},
pp. \bfpage{57}--\blpage{88}.
\bpublisher{{Academic press}},
\blocation{{London}}
(\byear{1981})
\end{bchapter}
\endbibitem

\bibitem[\protect\citeauthoryear{Li and Lin}{2022}]{li2022restarted}
\begin{bchapter}
\bauthor{\bsnm{Li}, \binits{H.}},
\bauthor{\bsnm{Lin}, \binits{Z.}}:
\bctitle{Restarted nonconvex accelerated gradient descent: No more
  polylogarithmic factor in the {{$O (\varepsilon^{-7/4})$}} complexity}.
In: \bbtitle{International Conference on Machine Learning},
pp. \bfpage{12901}--\blpage{12916}
(\byear{2022}).
\bcomment{PMLR}
\end{bchapter}
\endbibitem

\bibitem[\protect\citeauthoryear{Absil
  et~al.}{2008}]{absilOptimizationAlgorithmsMatrix2008}
\begin{bbook}
\bauthor{\bsnm{Absil}, \binits{P.-A.}},
\bauthor{\bsnm{Mahony}, \binits{R.}},
\bauthor{\bsnm{Sepulchre}, \binits{R.}}:
\bbtitle{Optimization Algorithms on Matrix Manifolds},
p. \bfpage{224}.
\bpublisher{Princeton University Press},
\blocation{Princeton, NJ}
(\byear{2008})
\end{bbook}
\endbibitem

\bibitem[\protect\citeauthoryear{Boumal}{2023}]{boumal2023introductionoptimization}
\begin{bbook}
\bauthor{\bsnm{Boumal}, \binits{N.}}:
\bbtitle{An Introduction to Optimization on Smooth Manifolds}.
\bpublisher{{Cambridge University Press}},
\blocation{Cambridge}
(\byear{2023})
\end{bbook}
\endbibitem

\bibitem[\protect\citeauthoryear{Cartis
  et~al.}{2011}]{cartis2011Adaptivecubica}
\begin{barticle}
\bauthor{\bsnm{Cartis}, \binits{C.}},
\bauthor{\bsnm{Gould}, \binits{N.I.M.}},
\bauthor{\bsnm{Toint}, \binits{P.L.}}:
\batitle{Adaptive cubic regularisation methods for unconstrained optimization.
  {{Part I}}: Motivation, convergence and numerical results}.
\bjtitle{Mathematical Programming}
\bvolume{127}(\bissue{2}),
\bfpage{245}--\blpage{295}
(\byear{2011})
\end{barticle}
\endbibitem

\bibitem[\protect\citeauthoryear{Luk{\v s}an
  et~al.}{2008}]{luksan2008Lagrangemultipliers}
\begin{barticle}
\bauthor{\bsnm{Luk{\v s}an}, \binits{L.}},
\bauthor{\bsnm{Matonoha}, \binits{C.}},
\bauthor{\bsnm{Vl{\v c}ek}, \binits{J.}}:
\batitle{On {{Lagrange}} multipliers of trust-region subproblems}.
\bjtitle{BIT Numerical Mathematics}
\bvolume{48}(\bissue{4}),
\bfpage{763}--\blpage{768}
(\byear{2008})
\end{barticle}
\endbibitem

\bibitem[\protect\citeauthoryear{Gould
  et~al.}{1999}]{gould1999SolvingTrustRegion}
\begin{barticle}
\bauthor{\bsnm{Gould}, \binits{N.I.M.}},
\bauthor{\bsnm{Lucidi}, \binits{S.}},
\bauthor{\bsnm{Roma}, \binits{M.}},
\bauthor{\bsnm{Toint}, \binits{P.L.}}:
\batitle{Solving the trust-region subproblem using the {{Lanczos}} method}.
\bjtitle{SIAM Journal on Optimization}
\bvolume{9}(\bissue{2}),
\bfpage{504}--\blpage{525}
(\byear{1999})
\end{barticle}
\endbibitem

\bibitem[\protect\citeauthoryear{Nocedal and
  Wright}{2006}]{nocedal2006Numericaloptimization}
\begin{bbook}
\bauthor{\bsnm{Nocedal}, \binits{J.}},
\bauthor{\bsnm{Wright}, \binits{S.J.}}:
\bbtitle{Numerical Optimization}.
\bsertitle{Springer Series in Operations Research and Financial Engineering}.
\bpublisher{{Springer}},
\blocation{{New York}}
(\byear{2006})
\end{bbook}
\endbibitem

\bibitem[\protect\citeauthoryear{Golub and
  Van~Loan}{2013}]{golub2013Matrixcomputationsa}
\begin{bbook}
\bauthor{\bsnm{Golub}, \binits{G.H.}},
\bauthor{\bsnm{Van~Loan}, \binits{C.F.}}:
\bbtitle{Matrix Computations}.
\bpublisher{JHU Press},
\blocation{Philadelphia}
(\byear{2013})
\end{bbook}
\endbibitem

\bibitem[\protect\citeauthoryear{Trefethen and
  Bau}{2022}]{trefethen2022Numericallinear}
\begin{bbook}
\bauthor{\bsnm{Trefethen}, \binits{L.N.}},
\bauthor{\bsnm{Bau}, \binits{D.}}:
\bbtitle{Numerical Linear Algebra}.
\bpublisher{SIAM},
\blocation{Philadelphia}
(\byear{2022})
\end{bbook}
\endbibitem

\bibitem[\protect\citeauthoryear{Conn et~al.}{2000}]{conn2000Trustregion}
\begin{bbook}
\bauthor{\bsnm{Conn}, \binits{A.R.}},
\bauthor{\bsnm{Gould}, \binits{N.I.M.}},
\bauthor{\bsnm{Toint}, \binits{P.L.}}:
\bbtitle{Trust Region Methods}.
\bpublisher{{SIAM}},
\blocation{{Philadelphia}}
(\byear{2000})
\end{bbook}
\endbibitem

\bibitem[\protect\citeauthoryear{Parise and
  Ozdaglar}{2023}]{parise2023GraphonGames}
\begin{barticle}
\bauthor{\bsnm{Parise}, \binits{F.}},
\bauthor{\bsnm{Ozdaglar}, \binits{A.}}:
\batitle{Graphon games: A statistical framework for network games and
  interventions}.
\bjtitle{Econometrica}
\bvolume{91}(\bissue{1}),
\bfpage{191}--\blpage{225}
(\byear{2023})
\end{barticle}
\endbibitem

\bibitem[\protect\citeauthoryear{Kor and
  Zhou}{2023}]{kor2023Multiactivityinfluencea}
\begin{barticle}
\bauthor{\bsnm{Kor}, \binits{R.}},
\bauthor{\bsnm{Zhou}, \binits{J.}}:
\batitle{Multi-activity influence and intervention}.
\bjtitle{Games and Economic Behavior}
\bvolume{137},
\bfpage{91}--\blpage{115}
(\byear{2023})
\end{barticle}
\endbibitem

\bibitem[\protect\citeauthoryear{Zhang}{2011}]{zhang2011Matrixtheory}
\begin{bbook}
\bauthor{\bsnm{Zhang}, \binits{F.}}:
\bbtitle{Matrix Theory: Basic Results and Techniques}.
\bpublisher{{Springer}},
\blocation{{New York}}
(\byear{2011})
\end{bbook}
\endbibitem

\bibitem[\protect\citeauthoryear{Cartis
  et~al.}{2020}]{cartis2020monotonicestimates}
\begin{barticle}
\bauthor{\bsnm{Cartis}, \binits{C.}},
\bauthor{\bsnm{Gould}, \binits{N.I.M.}},
\bauthor{\bsnm{Lange}, \binits{M.}}:
\batitle{On monotonic estimates of the norm of the minimizers of regularized
  quadratic functions in {{Krylov}} spaces}.
\bjtitle{BIT Numerical Mathematics}
\bvolume{60}(\bissue{3}),
\bfpage{583}--\blpage{589}
(\byear{2020})
\end{barticle}
\endbibitem

\bibitem[\protect\citeauthoryear{Hsia et~al.}{2017}]{hsia2017theory}
\begin{barticle}
\bauthor{\bsnm{Hsia}, \binits{Y.}},
\bauthor{\bsnm{Sheu}, \binits{R.-L.}},
\bauthor{\bsnm{Yuan}, \binits{Y.-X.}}:
\batitle{Theory and application of $p$-regularized subproblems for $p>2$}.
\bjtitle{Optimization Methods and Software}
\bvolume{32}(\bissue{5}),
\bfpage{1059}--\blpage{1077}
(\byear{2017})
\end{barticle}
\endbibitem

\end{thebibliography}
\end{document}